\newtheorem{theorem}{Theorem}[section]
\newtheorem{lemma}[theorem]{Lemma}
\newtheorem{proposition}[theorem]{Proposition}
\newtheorem{corollary}[theorem]{Corollary}
\theoremstyle{definition}
\newtheorem{example}[theorem]{Example}
\newtheorem{remark}[theorem]{Remark}
\newcommand{\C}{\mathbb{C}}
\newcommand{\E}{\mathbb{E}}
\renewcommand{\H}{\mathbb{H}}
\newcommand{\N}{\mathbb{N}}
\newcommand{\Sd}{\mathbb{S}^{d-1}}
\renewcommand{\P}{\mathbb{P}}
\newcommand{\R}{\mathbb{R}}
\newcommand{\Rd}{{\mathbb{R}^d}}
\newcommand{\bfX}{\mathbf{X}}
\newcommand{\calB}{\mathcal{B}}
\newcommand{\calM}{\mathcal{M}}
\DeclareMathOperator{\dist}{dist}
\DeclareMathOperator{\diam}{diam}
\newcommand{\ind}{\mathds{1}}
\newcommand{\one}{{\bf 1}}
\renewcommand{\subset}{\subseteq}
\newcommand{\vphi}{\varphi}
\renewcommand{\epsilon}{\varepsilon}
\renewcommand{\leq}{\leqslant}
\renewcommand{\geq}{\geqslant}
\newcommand{\Rdo}{\Rd \setminus \{0\}}
\newcommand{\hP}{\widehat{\P}}
\newcommand{\hbfX}{\widehat{\bfX}}
\newcommand{\ud}{\,{\rm d}}
\newcommand{\od}{{\rm d}}
\newcommand{\hM}{\widehat{M}}
\newcommand{\hbeta}{\widehat{\beta}}
\newcommand{\cbhi}{\mathrm{C_{BHI}}}
\definecolor{kb}{rgb}{0.1,0.5,0.1}
\numberwithin{equation}{section}
\begin{document}

\title[Non-symmetric stable processes: Dirichlet heat kernel and Yaglom limit]{Non-symmetric stable processes: Dirichlet heat kernel, Martin kernel and Yaglom limit}

\author{\L{}ukasz Le\.{z}aj}
\email{lukasz.lezaj@pwr.edu.pl}
\address{Address: Wroc\l{a}w University of Science and Technology, Department of Pure and Applied Mathematics, wyb. Wyspia\'{n}skiego 27, 50-370 Wroc\l{}aw, Poland}

\thanks{The author was partially supported by the National Science Centre (Poland): grant 2021/41/N/ST1/04139.}

\subjclass[2020]{Primary: 60J50, 60F05  secondary: 60G52, 60J35.}
 \keywords{non-symmetric stable process, heat kernel estimates, Dirichlet problem, Yaglom limit, Martin kernel}

\begin{abstract}
We study a $d$-dimensional non-symmetric strictly $\alpha$-stable L\'{e}vy process $\bfX$, whose spherical density is bounded and bounded away from the origin. First, we give sharp two-sided estimates on the transition density of $\bfX$ killed when leaving an arbitrary $\kappa$-fat set. We apply these results to get the existence of the Yaglom limit for arbitrary $\kappa$-fat cone. In the meantime we also obtain the spacial asymptotics of the survival probability at the vertex of the cone expressed by means of the Martin kernel for $\Gamma$ and its homogeneity exponent. Our results hold for the dual process $\hbfX$, too.
\end{abstract}

\maketitle

 %
 %

\section{Introduction}
The aim of this article is to establish certain results which are well known for rotation-invariant $\alpha$-stable L\'{e}vy processes, but have so far remained unknown in the non-symmetric case. Given $d \in \{1,2,\ldots\}$ and $\alpha \in (0,2)$, we let $\bfX$ be a strictly $\alpha$-stable L\'{e}vy process in $\Rd$. It is known that the L\'{e}vy measure $\nu$ of $\bfX$ has a following representation:
\begin{equation*}
	\nu(B) = \int_{\Sd} \,\zeta(\od\xi)\int_0^{\infty} \ind_{B}(r\xi)r^{-1-\alpha}\,\od r, \quad B \in \calB(\Rd).
\end{equation*}
The measure $\zeta$ is called the spherical measure and it describes the intensity of of jumps $\bfX$ in certain directions. We consider the following assumption:
\begin{enumerate}[topsep=0.3cm,leftmargin=*,labelindent=1cm,labelsep=1cm]
	\item[{\bf A}] The spherical measure $\zeta$ has a density $\lambda$ with respect to the surface measure, which is bounded and bounded away from zero, i.e. there is $\theta \in (0,1]$ such that
	\[
	\frac{\od\zeta}{\od\sigma} = \lambda \qquad \text{and} \qquad \theta \leq \lambda(\xi) \leq \theta^{-1}, \quad \xi \in \Sd.
	\]
\end{enumerate}
Informally speaking, assumption {\bf A} implies that the process $\bfX$ is, in a sense, \emph{similar} to the (classical) isotropic $\alpha$-stable L\'{e}vy process. Of course, if $\lambda$ is constant on the unit sphere, then one recovers the well-known rotation-invariant case with the fractional Laplacian $-(-\Delta)^{\alpha/2}$ as a generator. For an open set $D$ and $x,y \in D$, we let $p_t^D(x,y)$ be the Dirichlet heat kernel of $D$, that is the transition density of $\bfX$ killed when exiting $D$. Accordingly, for $x \in D$,
\[
\P_x(\tau_D>t) = \int_{D} p_t^D(x,y)\ud y
\]
is the survival probability of $\bfX$ in $D$. Let $\hbfX:=-\bfX$ be the dual process of $\bfX$. For consistency of the notation, all objects pertaining to $\hbfX$ will also be denoted by a superscript ,,\string^''. For instance, for $x,y \in \Rd$, we let $\widehat{p}_t(x,y) = p_t(y,x)$ be the (free) heat kernel of $\hbfX$. For the definition of $\kappa$-fat sets, see Section \ref{sec:prelims}. Our first result is the Varopoulos-type (see Varopoulos \cite{Varopoulos03}) factorisation of the Dirichlet heat kernel corresponding to the process $\bfX$ killed when exiting $D$.
\begin{theorem}\label{thm:dhk}
	Suppose that $R \in (0,\infty]$ and there is $\kappa \in (0,1)$ such that $D$ is $(\kappa,r)$-fat for all $r \in (0,R]$. Assume that $\bfX$ is a strictly $\alpha$-stable L\'{e}vy process satisfying {\bf A}. Then for all $c \geq 1$ there is $C = C(\alpha,d,\lambda,\kappa,c)$ such that for all $x,y \in D$ and $0 \leq t \leq cR^{\alpha}$,
	\begin{equation}\label{eq:main_thm_1}
	C^{-1}p_t^D(x,y) \leq \P_x(\tau_D>t)p_t(x,y)\hP_y(\tau_D>t) \leq Cp_t^D(x,y).
	\end{equation}
\end{theorem}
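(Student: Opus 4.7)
The plan follows the Varopoulos--Bogdan--Grzywny--Ryznar scheme, adapted to account for the non-symmetric generator. Three preliminary ingredients are needed. First, sharp two-sided bounds on the free heat kernel
$p_t(x,y)\asymp t^{-d/\alpha}\wedge \bigl(t\,|y-x|^{-d-\alpha}\bigr)$,
which under Assumption~\textbf{A} follow by standard Fourier and real-variable techniques since the characteristic exponent of $\bfX$ is comparable to $|\xi|^{\alpha}$. Second, the interior Dirichlet comparison $p_t^D(x,y)\asymp p_t(x,y)$ valid whenever $\delta_D(x)\wedge\delta_D(y)\geq c\,t^{1/\alpha}$, derived via a L\'{e}vy-system / exit-time decomposition combined with the parabolic Harnack principle for $\bfX$. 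Third, the geometric consequence of $\kappa$-fatness: at every scale $r\in(0,R]$ and every $x\in D$ one may select a point $x_r\in D$ with $|x-x_r|<r$ and $\delta_D(x_r)>\kappa r$.

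With these at hand I would apply Chapman--Kolmogorov to obtain
\[
p_t^D(x,y)=\int_D\!\int_D p_{t/3}^D(x,z)\,p_{t/3}^D(z,w)\,p_{t/3}^D(w,y)\ud z\ud w,
\]
and fix $r=(t/3)^{1/\alpha}$. For the \emph{lower bound}, restrict $z\in B(x_r,\kappa r/2)$ and $w\in B(y_r,\kappa r/2)$; ingredient~(ii) gives $p_{t/3}^D(z,w)\geq c\,p_{t/3}(z,w)\geq c'\,p_t(x,y)$ on this region, while integrating the outer factors yields $\P_x(\bfX_{t/3}\in B(x_r,\kappa r/2),\,\tau_D>t/3)$ and its $\hbfX$-analogue, which are bounded from below by $c\,\P_x(\tau_D>t)$ and $c\,\hP_y(\tau_D>t)$ through a Harnack-type ``hitting'' estimate. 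For the \emph{upper bound}, $p_{t/3}^D(z,w)\leq p_{t/3}(z,w)\leq C\,p_t(x,y)$ on the same restricted region; the complementary region, where $z$ or $w$ lies within $r$ of $\partial D$, is absorbed by the Ikeda--Watanabe / L\'{e}vy-system formula, the crude estimate $p_t^D\leq p_t$, and one additional Chapman--Kolmogorov bootstrap step.

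The chief technical obstacle is the doubling of the survival probability, $\P_x(\tau_D>t)\asymp\P_x(\tau_D>t/3)$, uniformly over $x\in D$ and $t\in(0,cR^{\alpha}]$, together with the analogue for $\hbfX$. In the symmetric setting this is a classical consequence of the parabolic Harnack inequality and the $\kappa$-fat selection; in the present non-symmetric case one exploits that $\hbfX=-\bfX$ again satisfies Assumption~\textbf{A} with reflected spherical density $\widehat\lambda(\xi)=\lambda(-\xi)$, so every estimate obtained for $\bfX$ transfers to $\hbfX$ with identical constants. Once doubling is established for both processes, the hitting estimates, the tail control in the upper bound, and hence the full factorisation~\eqref{eq:main_thm_1} follow by combining the ingredients above and tracking the dependence of all constants on $c$.
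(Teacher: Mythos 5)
Your high-level scheme (Chapman--Kolmogorov, the $\kappa$-fat selection, scaling reduction to a fixed time) is the same family as the paper's, and your lower-bound sketch is essentially correct: restricting the triple-product integral to two interior balls near $x_r$ and $y_r$, and controlling the outer factors by the survival probabilities via the exit-time decomposition, is exactly what the paper does (Lemmas \ref{lem:dhk_partial_est}, \ref{lem:lower_est_balls}, \ref{lem:dhk_lower}). The upper bound, however, is where the proposal breaks down. You propose to bound the Chapman--Kolmogorov integral by treating the ``restricted region'' and a ``complementary region, where $z$ or $w$ lies within $r$ of $\partial D$.'' But the complement of $B(x_r,\kappa r/2)\times B(y_r,\kappa r/2)$ inside $D\times D$ is vastly larger than the set where $z$ or $w$ is near $\partial D$; for instance $z,w$ can both lie deep inside $D$ at distance $\gg r$ from $x_r,y_r$, so $p_{t/3}^D(z,w)$ is of order $t^{-d/\alpha}$ and is not comparable to $p_t(x,y)$ there. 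Bounding this bulk of the integral by the desired $\P_x(\tau_D>t)\,p_t(x,y)\,\hP_y(\tau_D>t)$ requires a nontrivial argument that the proposal does not supply (the vague ``one additional Chapman--Kolmogorov bootstrap step'' does not identify it).

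What is missing is the paper's key intermediate estimate: the \emph{one-sided} factorisations $p_1^D(u,v)\le c\,p_1(u,v)\,\P_u(\tau_D>1)$ and its dual $\widehat{p}_1^D(u,v)\le c\,p_1(v,u)\,\hP_u(\tau_D>1)$, established in the proof of Lemma \ref{lem:dhk_upper} via the exit-time decomposition of Lemma \ref{lem:dhk_partial_est} together with the Ikeda--Watanabe formula, Proposition \ref{prop:Pr_est}, the boundary Harnack principle from \cite{BKK15}, and the survival-probability comparison of Lemma \ref{lem:surv_prop_est}/Remark \ref{rem:2}. Once one has each one-sided bound, a single Chapman--Kolmogorov split $p_2^D(x,y)=\int_D p_1^D(x,z)\widehat{p}_1^D(y,z)\ud z$ immediately yields the two-factor upper bound; trying to extract both survival factors from a single three-way split, as you do, is the wrong decomposition. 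A secondary remark: you invoke the parabolic Harnack inequality (for the interior comparison and for the survival-time doubling), whereas the paper deliberately uses only the elliptic Harnack inequality \eqref{eq:HI}, the scale-invariant boundary Harnack principle \eqref{eq:BHP} from \cite{BKK15}, and Ikeda--Watanabe; if you do go the parabolic-Harnack route you must verify it is available for these non-symmetric generators, and note that the doubling $\P_x(\tau_D>1/3)\le c\,\P_x(\tau_D>3)$ in the paper (Lemma \ref{lem:surv_prop_est}) is proved by elementary exit-time and jump estimates without parabolic tools.
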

The proof of Theorem \ref{thm:dhk} is provided at the end of Section \ref{sec:dhk}. Put in context, it is an extension and natural continuation of Bogdan et al. \cite[Theorem 1]{KBTGMR10}, where the analogous result was proved for the rotation-invariant $\alpha$-stable L\'{e}vy process, i.e. when $\zeta$ is uniformly distributed on the unit sphere $\Sd$. Here we allow the strictly stable L\'{e}vy process to be non-symmetric (and in particular: anisotropic), as long as the distribution of the directions of jumps is absolutely continuous and its density is uniformly bounded and bounded away from zero. Observe that $\lambda$ need not be continuous and the domain $D$ may be unbounded. 

As a particular application, one can consider a $\kappa$-fat set $\Gamma$, which is invariant under rescaling, i.e. for every $r>0$ we have $ry \in \Gamma$, if only $y \in \Gamma$. To wit, $\Gamma$ is a generalised $\kappa$-fat cone in $\Rd$. By Theorem \ref{thm:dhk}, we immediately obtain global sharp two-sided estimate of the transition density of the Dirichlet heat kernel of $\Gamma$, i.e.
\[
p_t^{\Gamma}(x,y) \approx \P_x(\tau_{\Gamma}>t) p_t(x,y) \hP_y(\tau_{\Gamma}>t), \quad t>0,\,x,y \in \Gamma.
\]
Here the symbol $f\approx g$ means that the ratio of two functions $f$ and $g$ stays between two positive constants. In the second part ot the article we exploit the factorisation above to analyse the limiting behaviour of the process living in $\Gamma$. Denote $\one := (0,\ldots,0,1) \in \Rd$. Our second result gives the existence of the Yaglom limit for $\bfX$ in the $\kappa$-fat cone $\Gamma$.
\begin{theorem}\label{thm:Yaglom}
	Let $\bfX$ be a strictly $\alpha$-stable L\'{e}vy process satisfying {\bf A}. Assume $\Gamma$ is a $\kappa$-fat cone such that $\one \in \Gamma$. There is a probability measure $\mu$ concentrated on $\Gamma$ such that for every Borel set $A \subseteq \Gamma$ and every probability measure $\gamma$ on $\Gamma$ satisfying $\int_{\Gamma}(1+|y|)^{\alpha}\,\gamma(\od y)<\infty$,
	\begin{equation}\label{eq:Y}
	\lim_{t \to \infty} \P_{\gamma} \big( t^{-1/\alpha}X_t \in A \big| \tau_{\Gamma}>t \big) = \mu(A), \quad x \in \Gamma.
	\end{equation}
\end{theorem}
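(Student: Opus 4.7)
My plan is to combine the self-similarity of $\bfX$, the scale invariance of $\Gamma$, the factorisation from Theorem~\ref{thm:dhk}, and vertex asymptotics provided by the Martin kernel of $\Gamma$ at $0$.

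\textbf{Step 1 (Scaling reduction).} By strict $\alpha$-stability and scale invariance of $\Gamma$, the rescaled process $(t^{-1/\alpha}X_{ts})_{s\ge 0}$ under $\P_x$ has the law of $\bfX$ under $\P_{t^{-1/\alpha}x}$ and the event $\{\tau_\Gamma > t\}$ is preserved. Hence
$$\P_x(t^{-1/\alpha}X_t \in A,\,\tau_\Gamma > t) = \int_A p_1^\Gamma(t^{-1/\alpha}x, z)\ud z, \qquad \P_x(\tau_\Gamma > t) = \P_{t^{-1/\alpha}x}(\tau_\Gamma > 1).$$
Integrating against $\gamma$, the conditional probability in \eqref{eq:Y} becomes
$$\frac{\int_\Gamma \int_A p_1^\Gamma(t^{-1/\alpha}y, z)\ud z\,\gamma(\ud y)}{\int_\Gamma \P_{t^{-1/\alpha}y}(\tau_\Gamma > 1)\,\gamma(\ud y)},$$
so the whole question reduces to the behaviour of $p_1^\Gamma(y,z)$ and $\P_y(\tau_\Gamma > 1)$ as $y \to 0$ inside $\Gamma$.

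\textbf{Step 2 (Martin-kernel asymptotics at the vertex).} The key input is the existence of the Martin kernel $M$ of $\Gamma$ at the vertex, which is nonnegative, homogeneous of some degree $\beta \in (0,\alpha)$, and encodes the boundary behaviour of nonnegative $\bfX$-harmonic functions vanishing on $\partial\Gamma$ near $0$. Combining this with Theorem~\ref{thm:dhk} should yield, for some constant $C_0 > 0$ and a nonnegative measurable $\phi$ on $\Gamma$, the asymptotics
$$\P_y(\tau_\Gamma > 1) = C_0 M(y)(1+o(1)), \qquad p_1^\Gamma(y,z) = C_0 M(y)\phi(z)(1+o(1))$$
as $y \to 0$ in $\Gamma$, the latter pointwise in $z$. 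Dividing produces $p_1^\Gamma(y,z)/\P_y(\tau_\Gamma > 1) \to \phi(z)$, and the identity $\int_\Gamma p_1^\Gamma(y,z)\ud z = \P_y(\tau_\Gamma > 1)$ combined with the uniform upper bound from Theorem~\ref{thm:dhk} gives $\int_\Gamma \phi(z)\ud z = 1$. Then I set $\mu(\ud z) := \phi(z)\ud z$.

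\textbf{Step 3 (Uniformity in $\gamma$ and main obstacle).} To commute the limit with the $\gamma$-integration, the homogeneity $M(t^{-1/\alpha}y) = t^{-\beta/\alpha}M(y)$, the bound $M(y) \le C|y|^\beta$ with $\beta < \alpha$, and Theorem~\ref{thm:dhk} jointly yield a uniform-in-$y$ domination of the form $\P_{t^{-1/\alpha}y}(\tau_\Gamma > 1) \le C\,t^{-\beta/\alpha}(1+|y|)^\alpha$, with an analogous bound for the numerator. The hypothesis $\int_\Gamma (1+|y|)^\alpha \gamma(\ud y) < \infty$ then legitimises dominated convergence in both the numerator and denominator of Step~1, giving the limit $\mu(A)$ independent of $\gamma$. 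The hard part is Step~2: establishing the Martin kernel at the vertex together with the sharp vertex asymptotic $\P_y(\tau_\Gamma > 1) \sim C_0 M(y)$ in the non-symmetric setting. Without rotational symmetry of $\lambda$, this requires a boundary Harnack principle for $\bfX$ under assumption~{\bf A} and the uniqueness of nonnegative harmonic functions vanishing on $\partial\Gamma$ near $0$, which presumably form the technical backbone of the paper.
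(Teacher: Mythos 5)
You correctly reduce the Yaglom problem via scaling (Step 1) and correctly identify that everything hinges on the asymptotics of $p_1^\Gamma(y,z)$ and $\P_y(\tau_\Gamma > 1)$ as $\Gamma\ni y \to 0$ (Step 2); the dominated-convergence argument sketched in Step 3 to pass to general initial laws $\gamma$ is also essentially right. But the crux of Step 2 is left open, and you are explicit about it: ``Combining this with Theorem~\ref{thm:dhk} should yield [...]''. Theorem~\ref{thm:dhk} only produces \emph{two-sided comparability} of $p_1^\Gamma(y,z)$ with a product of survival probabilities and the free heat kernel; it cannot by itself produce the exact asymptotics $\P_y(\tau_\Gamma > 1)\sim C_0 M(y)$ and $p_1^\Gamma(y,z)/\P_y(\tau_\Gamma>1)\to\phi(z)$. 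Passing from $\approx$ to $\sim$ is precisely the substance of the proof, and the boundary Harnack principle plus a uniqueness statement for harmonic functions near the vertex (which you invoke) is not, on its own, enough to close that gap.

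What the paper actually does is different in mechanism. It introduces the conditioned kernel $\rho_t(x,y) = p_t^\Gamma(x,y)/\bigl(M_\Gamma(x)\hM_\Gamma(y)\bigr)$, where $\hM_\Gamma$ is the Martin kernel of the \emph{dual} process $\hbfX$ --- an object you never mention but which is forced on you by the structure of Theorem~\ref{thm:dhk}, since the factorisation involves $\hP_y(\tau_D>t)$ rather than $\P_y(\tau_D>t)$. It then performs a logarithmic time-change and dilation to obtain an Ornstein--Uhlenbeck-type semigroup $L_t$, proves via a convexity/fixed-point argument on a uniformly integrable family that $L_t$ admits a unique stationary density $\vphi$ (Theorem~\ref{thm:s_density}), invokes the Kulik--Scheutzow ergodic theorem to get $L^1$ convergence of $\ell_t(x,\cdot)$ to $\vphi$, and only after unwinding the scaling obtains the pointwise limit $\rho_1(x,\cdot)\to\vphi$ and the vertex asymptotic $\P_x(\tau_\Gamma>1)/M_\Gamma(x)\to C_1$. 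The Yaglom density is then $\vphi(y)\hM_\Gamma(y)/C_1$; in your notation $\phi(z)=\vphi(z)\hM_\Gamma(z)/C_1$. None of this machinery appears in your sketch.

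Two smaller points. First, the relevant Martin kernel is the one with pole at \emph{infinity} (homogeneous of degree $\beta\in[0,\alpha)$ and vanishing at the vertex); calling it ``the Martin kernel at the vertex'' is misleading, even though your subsequent use of $M(y)\to 0$ shows you have the right object in mind. Second, because of the asymmetry one has $\beta\neq\hbeta$ in general (Example~\ref{ex:halfspace}), so any argument that implicitly treats $\bfX$ and $\hbfX$ symmetrically in the factorisation of $p_t^\Gamma$ will fail; the paper's choice of the ``symmetric'' normalisation $\rho_t=p_t^\Gamma/(M_\Gamma\otimes\hM_\Gamma)$ rather than a one-sided Doob $h$-transform is made precisely to accommodate this.
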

Theorem \ref{thm:Yaglom} is proved at the end of Section \ref{sec:Y} as Theorem \ref{thm:Y2}. In particular, by letting $\gamma$ be a Dirac delta at $x$, one can consider the process which starts from an arbitrary fixed point $x \in \Gamma$. Informally speaking, we show that, given the survival of $\bfX$ in the cone $\Gamma$, the limiting probability distribution of the properly rescaled process is independent of the initial distribution. In fact, the measure $\mu$ has a strictly positive density on $\Gamma$ given by the Martin kernel for the cone (see Lemma \ref{lem:M}) and the stationary density for the corresponding Ornstein-Uhlenbeck operators from Theorem \ref{thm:s_density} (for the exact formula for the density, see Section \ref{sec:Y} and Theorem \ref{thm:Y2}). 

Let us comment on the methodology and refer to previous developments in the literature. Theorem \ref{thm:dhk} is a next member of a large family of results concerning (Dirichlet) heat kernel estimates of stable processes in various domains. The analysis began with the estimates of the Green function in $C^{1,1}$ domains for the (classical) Laplace operator given by Zhao \cite{Zhao86}. The extension to Lipschitz domains was accomplished by Bogdan \cite{KB00}. Shortly afterwards, Zhang \cite{Zhang02} established sharp two-sided estimates of the Dirichlet heat kernel in $C^{1,1}$ domains, which were extended later to Lipschitz domains in Varopoulos \cite{Varopoulos03}. A parallel track was taken for the (non-local) fractional Laplacian, where the Green functions estimates for $C^{1,1}$ domains were obtained by Kulczycki \cite{Kulczycki97} and Chen and Song \cite{MR1654824}, and for Lipschitz domains --- by Jakubowski \cite{MR1991120}. The last steps were provided by Chen et al. \cite{MR2677618} and Bogdan et al. \cite{KBTGMR10}, who developed sharp two sided estimates of the Dirichlet heat kernel in $C^{1,1}$ and Lipschitz domains, respectively. The paper \cite{KBTGMR10} is our roadmap in the first part of the article.

We note that since then, analogous results has been obtained for various types of symmetric processes, see, for instance, Chen et al. \cite{MR2981852, MR2917772}. In a more general setting, Bogdan et al. \cite{KBTGMR-dhk} and Chen et al. \cite{CKS14} considered a large class of unimodal L\'{e}vy process whose characteristic exponent satisfies weak scaling conditions, in particular when the upper scaling index is strictly smaller than $2$. Subordinated Brownian motions which do not satisfy this assumption were studied by Kim and Mimica \cite{MR3835470}. Grzywny et al. \cite{MR4035035} considered symmetric Markov processes whose jumping kernel is comparable to the one of an unimodal L\'{e}vy process. Gradient estimates of the Dirichlet heat kernel were studied by Kulczycki and Ryznar \cite{MR3413864}. Out of vast amount of results, we would also like to mention a recent preprint of Chen et al. \cite{un_Chen-Hu-Zhao23}, who studied the Dirichlet heat kernel for cylindrical $\alpha$-stable L\'{e}vy process, a case --- in a sense --- complementary to our situation. For more information we refer the reader to the articles above and the references therein.

As already mentioned, the techniques used to derive the factorisation of the Dirichlet heat kernel are inspired by \cite{KBTGMR10}, with the necessary changes concerning the non-symmetry of the process. To give one example, the Dirichlet heat kernel is not symmetric anymore, which results in the necessity to work simultaneously with $\bfX$ and its dual $\hbfX$. Accordingly, the factorisation \eqref{eq:main_thm_1} is also expressed by means of objects pertaining to both the initial process $\bfX$ and its dual $\hbfX$, c.f. \cite[Eq. (0.25)]{Varopoulos03}. For this reason, the boundary behaviour of the Dirichlet heat kernel may be significantly different in variable $x$ than in $y$. In fact, this is the case even in the simple case of the half-space, as we advocate in Example \ref{ex:halfspace2}. It is crucial in our development that despite the non-symmetry of the process, our assumptions on $\bfX$ \emph{are} symmetric in $\bfX$ and $\hbfX$. In particular, the boundary Harnack principle holds for both $\bfX$ and $\hbfX$ (with the same constants) by virtue of Bogdan et al. \cite{BKK15}. However, other classical bricks in the potential theory of $\bfX$ such as the explicit formula for the Green function or the Poisson kernel of the ball are not available in the general non-symmetric case.
Nonetheless, the ideas of \cite{KBTGMR10} can be transferred to our setting. Here we should note that some of basic tools from the potential theory are provided by the accordingly titled paper by Vondra\^cek \cite{Vondracek02} and we frequently refer to his work in what follows. Interestingly, the \emph{standard} track of results (that is: Dirichlet heat kernel estimates follow sharp Green function estimates) seems to be disturbed in our case.

The application of Theorem \ref{thm:dhk} to unbounded cones provides the crucial tool in the analysis of the limiting distribution of the process conditioned not to leave the cone. This limit, if exists, is called the Yaglom limit due to the seminal paper of Yaglom \cite{Yaglom47}, who identified the so-called quasi-stationary distribution for the Bienaym\'{e}-Galton-Watson trees. Since then, it has been studied in various settings and for different types of stochastic processes and we refer to Van Doorn and Pollet \cite{MR3063313} for the comprehensive study. Out of vast amount of papers, let us mention Seneta and Vere-Jones \cite{MR0207047} and Tweedie \cite{Tweedie74} in the discrete-time Markov chain, and Jacka and Roberts \cite{JackaRoberts95} in the continuous-time case. Markov chains on non-negative integers with the origin as the absorbing state was studied i.a. by Ferrari et al. \cite{MR1334159}, van Doorn \cite{MR1133722} and Flaspohler and Holmes \cite{MR0346932}. See also e.g. Bean et al. \cite{BBLPPT} for quasi-birth-and-death processes, Asselah et al. \cite{MR3498004} for Fleming-Viot process, Lambert \cite{MR2299923} for branching processes or the recent paper by Harris et al. \cite{MR4499840} for the non-local branching Markov processes. This short list is far from complete and for more information we refer the reader to the references in the mentioned articles and to the survey \cite{MR3063313}.

The study of L\'{e}vy processes in the context of Yaglom limit and quasi-stationary distributions started with Mart\'{\i}nez and San Mart\'{\i}n \cite{SMJS94}, who studied the case of Brownian motion with drift as a counterpart of results obtained by Iglehart \cite{MR0368168} for the random walk with negative drift. Spectrally one-sided L\'{e}vy processes were investigated by Kyprianou and Palmowski \cite{MR2248228}, Mandles et al. \cite{MR2959448} or Palmowski and Vlasiou \cite{MR4171931}. One-dimensional self-similar Markov processes were considered by Hass and Rivero \cite{HassRivero12} and we note that in this case the appropriate rescaling of the process similar to \eqref{eq:Y} is essential to obtain a non-trivial limit. In the multi-dimensional case, Bogdan et al. \cite{BoPaWa2018} obtained the Yaglom limit in Lipschitz (generalised) cones for the isotropic $\alpha$-stable L\'{e}vy process. Recently Armstrong et al. \cite{ABGLW23-aihp} showed that the limit is in fact the same for every unimodal L\'{e}vy process which is in domain of attraction of the isotropic $\alpha$-stable law. We should also mention results of Zhang et al. \cite{MR3247530} who studied the Yaglom limit of Markov process killed when leaving sets of bounded volume, but their approach does not apply to general unbounded cones. We note in passing that the problem we study is intrinsically connected to, but nonetheless different than the conditioning of the process to stay forever in a certain set. For more information we refer the reader to Bertoin and Doney \cite{MR1331218}, Chaumont \cite{MR1419491} and Chaumont and Doney \cite{MR2164035}, or to Kyprianou et al. \cite{MR4415390}, where the isotropic $\alpha$-stable L\'{e}vy process conditioned to stay in a Lipschitz cone is considered.
 
The article \cite{BoPaWa2018} and its successor \cite{ABGLW23-aihp} were both based on a tricky compactness argument and a formula expressing the survival probability as a Green potential, which, after a rather technical argument, yields the spacial asymptotics of the survival probability at the vertex of the cone by means of its Martin kernel. A completely different approach was applied in the recent preprint of Bogdan et al. \cite{un_BKLP23}, where the authors considered the generalised Ornstein-Uhlenbeck semigroup and established the existence of its stationary density, which was later used to derive the existence of the Yaglom limit for more general family of $\kappa$-fat cones. This line of attack seems more versatile, since once the existence of the aforementioned stationary density is proved, the remaining part follows by the scaling property of the process and therefore, it should be applicable for other self-similar processes which enjoy both sharp two-sided estimates on the Dirichlet heat kernel and a $P_t^{\Gamma}$-invariant function which allows for Doob-type conditioning. In \cite{un_BKLP23}, this role is played by the Martin kernel for $\Gamma$ with the pole at infinity, whose existence was established by Ba\~{n}uelos and Bogdan \cite{BB04}. We follow this path and collect first some results and methods from the literature to conclude the existence and basic properties of the Martin kernel for the non-symmetric $\alpha$-stable process $\bfX$ and its dual $\hbfX$ in Lemma \ref{lem:M}. The crucial property is the homogeneity of order $\beta \in [0,\alpha)$, which, perhaps unsurprisingly, turns out to be different for $\bfX$ and $\hbfX$ even when $\Gamma$ is a half-space, see Example \ref{ex:halfspace}. For this reason, the rest of analysis performed in Section \ref{sec:Y} needs to be more delicate. For instance, the definition of the conditioned kernel $\rho_t$ \eqref{eq:rho_def} involves Martin kernels for both the starting process and its dual counterpart. This is due to the structure of factorisation in Theorem \ref{thm:dhk}, which implies that the sharp estimate of $\rho_t$ in \eqref{eq:rho_f} is, in a sense, \emph{symmetric} in $\bfX$ and $\hbfX$. Proposition \ref{prop:M_surv} then provides the appropriate control of the first and the last term of \eqref{eq:rho_f}. With these tools at our disposal, one can obtain the stationary density $\vphi$ of the corresponding Ornstein-Uhlenbeck semigroup $L_t$ (see \eqref{eq:26} and \eqref{eq:Lt} for definitions and Theorem \ref{thm:s_density} for the statement) and use it to conclude the existence of the desired Yaglom limit. Similarly to \cite{BoPaWa2018}, we first obtain the spacial asymptotics of the survival probability in Corollary \ref{cor:surv_M_as} and then use it for the proof of Theorem \ref{thm:Yaglom}. Observe that the Yaglom measure $\mu$ has a density which is expressed by objects pertaining to both $\bfX$ and $\hbfX$. Clearly, if $\bfX$ is symmetric, then $\bfX \stackrel{{\rm d}}{=} \hbfX$, but even in this case we extend results from \cite{BoPaWa2018} and \cite{un_BKLP23}, where the isotropic case was considered. Thus, Theorem \ref{thm:Yaglom} may be perceived as the next stage of development concerning the analysis of self-similar processes living in scale-invariant open sets.

Finally, for the sake of faithfulness we note that the roadmap for the method applied in \cite{un_BKLP23}, which we extend in this work, was set by Bogdan et al. \cite{BJKP23_jfa}, where the Hardy operator on $\Rd \setminus \{0\}$ was considered.

The article is organised as follows. Section \ref{sec:prelims} contains basic preliminary results concerning the non-symmetric strictly $\alpha$-stable processes, as well as the crucial bound on the growth of the harmonic functions close to the boundary (see Lemma \ref{lem:harmonic_growth_bound}). Section \ref{sec:dhk} is devoted to the proof of Theorem \ref{thm:dhk}. In Section \ref{sec:Y} we first discuss the Martin kernel of the cone $\Gamma$ and provide a simple, but relevant example, where the exponents $\beta$ and $\hbeta$ differ (see Example \ref{ex:halfspace}). This case also yields an example for Theorem \ref{thm:dhk}, where the phenomenon of different boundary behaviour in variables $x$ and $y$ is visible, see Example \ref{ex:halfspace2}. Then we apply the derived tools to the analysis of the corresponding generalised Ornstein-Uhlenbeck semigroup (Theorem \ref{thm:s_density}) and use it to prove Theorem \ref{thm:Yaglom}. 
\subsection*{Notation}
Throughout the article, $c$ will be a positive constant, which may vary from line to line in chain of estimates. Sometimes we will use the notation $c_1,c_2,\ldots$ to distinguish certain constants. By writing $c=c(a)$ we mean that $c$ depends at most on the parameter $a$. The notation $c=c(a,b,\ldots)$ is defined accordingly. For $x,z \in \Rd$, we denote by $x \cdot z$ the standard scalar product in $\Rd$. As usual, $|x|$ is the Euclidean norm. For $r>0$ we let $B(x,r):=\{z \in \Rd \colon |x-z|<r\}$ be the ball centred in $x$ of radius $r$. To abbreviate the notation, we will write $B_r:=B(0,r)$. For an open set $D$, we denote by $\delta_{D}(x) := \inf \{ |y-x|\colon y \in D^c \}$ the distance from the complement of $D$. Similarly, $\dist(x,D):=\inf\{ |y-x| \colon y \in D \}$ and $\dist(D_1,D_2) := \inf_{x \in D_1} \dist(x,D_2)$ for open sets $D_1$ and $D_2$. All considered sets, functions and measures are assumed to be Borel. As already noted, for two positive functions $f$ and $g$, by writing $f \approx g$ we mean that the ratio of $f$ and $g$ is bounded from above and below by positive constants. The symbols $\lesssim$ and $\gtrsim$ are defined accordingly. The set of $d \times d$ matrices with real entries will be denoted by $\calM_d$. To avoid unnecessary considerations, we set $\N = \{1,2,\ldots\}$. Finally, we let $a \wedge b = \min \{a,b\}$ and $a \vee b = \max \{a,b\}$.

\section{Preliminaries}\label{sec:prelims}
Let $d \in \N$. Throughout the paper we assume that $\bfX$ is a $d$-dimensional strictly stable L\'{e}vy process with the stability index $\alpha \in (0,2)$, that is for all $a>0$ the following equality of distributions holds:
\begin{equation}\label{eq:33}
\big(X_{at}: t \geq 0\big) \stackrel{d}{=} \big(a^{1/\alpha}X_t : t \geq 0\big).
\end{equation}
We note that for $\alpha=2$ one recovers Brownian motion, the case excluded from our considerations. Thus, there is a function $\psi \colon \Rd \mapsto \C$ such that
\[
\E e^{i \xi \cdot X_t} = e^{-t\psi(\xi)}, \quad \xi \in \Rd,
\]
and
\[
\psi(\xi) = -i \xi \cdot \gamma - \int_{\Rd} \big( e^{i\xi \cdot z}-1-i \xi \cdot z \ind_{|z|<1} \big) \,\nu(\od z), 
\]
where $\gamma \in \Rd$ is the drift component and $\nu$ is the L\'{e}vy measure satisfying
\[
\nu(\{0\})=0 \qquad \text{and} \qquad \int_{\Rd} \big(1 \wedge |z|^2\big)\,\nu(\od z)<\infty.
\]
Moreover, there is a finite measure $\zeta$ on $\Sd$ such that
\begin{equation}\label{eq:nu_spherical}
	\nu(B) = \int_{\Sd} \,\zeta(\od\xi)\int_0^{\infty} \ind_{B}(r\xi)r^{-1-\alpha}\,\od r, \quad B \in \calB(\Rd),
\end{equation}
see e.g. \cite[Theorem 14.3]{Sato99}. The measure $\zeta$ is called the spherical measure and it describes the non-isotropic intensity of the expansion of the process $\bfX$ in different directions. We note here that the trivial case $\alpha=1$ and $\nu \equiv 0$, that is $\bfX$ being a deterministic drift, is excluded. Therefore, the following characterisation of strictly stable L\'{e}vy processes in our setting holds true (see e.g. \cite[Theorem 14.7]{Sato99}):
\begin{itemize}
	\item Let $\alpha \in (0,1)$. $\bfX$ is strictly stable if and only if
	\begin{equation*}
	\gamma -\int_{B_1} z \,\nu(\od z) = 0.
	\end{equation*}
	\item Let $\alpha = 1$. $\bfX$ is strictly stable if and only if
	\begin{equation*}
	\gamma = 0 \qquad \text{and} \qquad \int_{\Sd} \xi \,\zeta(\od\xi) = 0.
	\end{equation*}
	\item Let $\alpha \in (1,2)$. $\bfX$ is strictly stable if and only if
	\begin{equation*}
	\gamma + \int_{B_1^c} z \,\nu(\od z) = 0.
	\end{equation*}
\end{itemize}
Put differently, every finite measure on $\Sd$ induces a strictly $\alpha$-stable L\'{e}vy process. The conditions above imply that, given $\alpha \in (0,2)$, the \emph{average expansion rate} of every strictly $\alpha$-stable L\'{e}vy process is the same. To make this statement precise, following Pruitt \cite{Pruitt81}, we define
\begin{align*}
	h(r) 
	= r^{-2}\int_{B_r}|z|^2\,\nu(\od z) + \nu(B_r^c) + r^{-1} \bigg|\gamma + \int_{\Rd} z \big( \ind_{|z|<r}-\ind_{|z|<1} \big) \,\nu(\od z)\bigg|, \quad r > 0.
\end{align*}

Using \eqref{eq:nu_spherical} together with the strict stability, it is easy to show that there is $c = c(\alpha,d,\zeta)$ such that
\begin{equation}\label{eq:h}
	h(r) = cr^{-\alpha}, \quad r>0.
\end{equation}
If we define $S(r) = \inf \{t>0 \colon |X_t|>r\}$, then \cite{Pruitt81} yields that $\E S(r) \approx h(r)$, which, by the equation above, justifies the \emph{similar average expansion} of strictly stable processes.

It is known that $\bfX$ is a Markov process with transition function given by
\[
P_t(x,A) = \int_A p_t(x,y)\ud y,
\]
where $p_t(x,y):=p_t(y-x)$ is the probability density function, so that
\[
\int_{\Rd} p_t(x)e^{i\xi\cdot x}\ud x = e^{-t\psi(\xi)}, \quad \xi \in \Rd.
\]
By Hartman and Wintner \cite{HW42}, $p_t$ is smooth and integrable for all $t>0$. The strict stability \eqref{eq:33} translates into the scaling property of the transition density as follows:
\begin{equation}\label{eq:hk_scaling}
	p_t(x,y) = t^{-d/\alpha}p_1 \big( t^{-1/\alpha}x,t^{-1/\alpha}y \big), \quad x,y \in \Rd, \, t>0.
\end{equation}
In particular, $p$ is jointly continuous on $(0,\infty) \times \Rd \times \Rd$.

Our {\bf standing} assumption in this article is that the spherical measure $\zeta$ has a density $\lambda$ with respect to the surface measure $\sigma$ which is bounded and bounded away from zero, that is
\begin{equation}\label{eq:main_as}
\frac{\od\zeta}{\od\sigma} = \lambda \qquad \text{and} \qquad \theta \leq \lambda(\xi) \leq \theta^{-1}, \quad \xi \in \Sd, 
\end{equation}
for some $\theta \in (0,1)$. It is clear that if $\lambda$ is constant, then $\zeta$ is the uniform distribution on the sphere $\Sd$ and we recover the L\'{e}vy measure of the isotropic $\alpha$-stable L\'{e}vy process. One immediately obtains from \eqref{eq:main_as} that the L\'{e}vy measure $\nu$ of $\bfX$ is absolutely continuous with respect to the Lebesgue measure on $\Rd$ with the density satisfying $\nu(x) = \lambda(x/|x|)|x|^{-d-\alpha}$, $x \in \Rd \setminus \{0\}$, and
\begin{equation}\label{eq:nu_approx}
\theta |x|^{-d-\alpha} \leq \nu(x) \leq \theta^{-1}|x|^{-d-\alpha}, \quad x \in \Rdo.
\end{equation}
By \cite[Theorem 1.5]{Watanabe07} and the scaling property \eqref{eq:hk_scaling} (see also \cite{Vondracek02} for some partial results),
\begin{equation}\label{eq:hk_approx}
	p_t(x,y) \approx t^{-d/\alpha} \wedge t|x-y|^{-d-\alpha}, \quad x,y \in \Rd,
\end{equation}
and in particular,
\begin{equation}\label{eq:11}
	p_1(x,y) \approx (1+|y-x|)^{-d-\alpha}, \quad x,y \in \Rd.
\end{equation}
Accordingly, $p_1(0,0)>0$ and $\bfX$ is of type A in the terminology of Taylor \cite{Taylor67}. Combining the equation above with \eqref{eq:nu_approx}, we obtain
\begin{equation}\label{eq:10}
	p_1(x,y) \approx 1 \wedge \nu(y-x) \approx p_1(y,x), \quad x,y \in \Rd.
\end{equation}
It follows also that for any constant $c >0$ one has
\begin{equation}\label{eq:12}
	p_t(x,y) \approx p_{ct}(x,y),
\end{equation}
with the implied constant dependent on $c$.
\subsection{Killed process}
Assume $D$ is a domain in $\Rd$. We let $\tau_D$ be the fist exit time from $D$, i.e.
\[
\tau_D = \inf \{t \geq 0 \colon X_t \notin D\}.
\]
The process $\bfX^D$ killed after exiting $D$ is then defined by
\begin{align*}
	X_t^D = \begin{cases}
		X_t, & t < \tau_D,\\
		\partial, & t \geq \tau_D,
	\end{cases}
\end{align*}
where $\partial$ is a cemetery point adjoined to $D$. For every $t \geq 0$ and $x,y \in D$ we define the (Dirichlet) heat kernel
\begin{equation*}
	p_t^D(x,y) = p_t(x,y) - \E_x [\tau_D<t;p_{t-\tau_D}(X_{\tau_D},y)].
\end{equation*}
Then it follows that $p_t^D(x,y) \leq p_t(x,y)$ and consequently,
\begin{equation}\label{eq:3}
	\int_D p_t^D(x,y)\ud y \leq 1.
\end{equation}
In the analogous way we define the dual killed process $\hbfX^D$ and the dual (Dirichlet) heat kernel $\widehat{p}_t^D$. The function $(t,x,y) \mapsto p_t^D(x,y)$ is continuous on $(0,\infty) \times D \times D$ and satisfies the Chapman-Kolmogorow property: for every $t,s>0$ and $x,y \in D$,
\begin{equation}\label{eq:dhk_CH-K}
	p_{t+s}^D(x,y) = \int_D p_t^D(x,z)p_s^D(z,y)\ud z.
\end{equation}
Moreover, for all $x,y \in D$ and all $t>0$,
\begin{equation}\label{eq:dual_dhk}
	p_t^D(x,y) = \widehat{p}_t^D(y,x).
\end{equation}
 
These (basic) properties 
are stated and proved in \cite{Vondracek02} as part of Theorem 3.2. Its proof is basically the application of arguments from Chung and Zhao \cite[Section 2.2]{ChungZhao95} for the (classical) Brownian motion case and Chen et al. \cite{MR1473631} with the necessary bounds on the heat kernel $p_t$ \cite[Proposition 2.1]{Vondracek02} and in fact, they hold for arbitrary domains $D \subseteq \Rd$. The analogous property for isotropic $\alpha$-stable process is well known, see e.g. \cite[Theorems 2.3 and 2.4]{MR1473631} and also follows from \cite[Section 2.2]{ChungZhao95}. For every $t>0$, $x \in D$ and $f \in L^{\infty}(D)$,
\[
P_t^D f(x):= \E_x[t<\tau_D;f(X_t)] = \int_D p_t^D(x,y)f(y)\ud y.
\]
To wit, $P_t^D$ is a killed semigroup corresponding to the killed strictly stable process $\bfX^D$. In particular, setting $f \equiv 1$ yields the survival probability
\begin{equation}\label{eq:surv_prob}
\P_x(\tau_D>t) = \int_D p_t^D(x,y)\,\od y.
\end{equation}
It goes without saying that the same results hold for the dual process $\hbfX^D$, too. By \eqref{eq:hk_scaling}, the scaling property holds for the Dirichlet heat kernel, too:
\begin{equation}\label{eq:dhk_scaling}
	p_t^D(x,y) = t^{-d/\alpha}p_1^{t^{-1/\alpha}D}\big(t^{-1/\alpha}x,t^{-1/\alpha}y\big), \quad x,y \in D, \,t>0.
\end{equation}
Put differently,
\begin{equation}\label{eq:dhk_scaling2}
	p_{r^{\alpha}t}^{rD}(rx,ry) = r^{-d}p_t^D(x,y), \quad r>0,\,x,y \in D.
\end{equation}
Therefore, one also has
\begin{equation}\label{eq:surv_scaling}
	\P_x(\tau_D>t) = \P_{t^{-1/\alpha}x}\Big( \tau_{t^{-1/\alpha}D}>1 \Big), \quad x \in \Rd,\,t>0.
\end{equation}

Throughout the article, we will often impose some restrictions on the geometry of the open set $D$. We will say that $D$ is $(\kappa,r)$-fat at $Q \in \overline{D}$, if there is a point $A=A_r(Q) \in D \cap B(Q,r)$ such that $B(A,\kappa r) \subseteq D \cap B(Q,r)$. If this property holds for all $Q \in \overline{D}$, then $D$ is said to be $(\kappa,r)$-fat. If there is $R>0$ such that $D$ is $(\kappa,r)$-fat for all $r \in (0,R]$, then we simply say that $D$ is $\kappa$-fat. For example, every Lipschitz set is $\kappa$-fat. Of course, if $D$ is $\kappa$-fat, then it is also $\kappa'$-fat for all $\kappa' < \kappa$. We also note that in general, the point $A_r(Q)$ need not be uniquely determined, but we choose to accept for this slight abuse of notation.

We say that a function $u \colon \Rd \mapsto \R$ is harmonic (with respect to $\bfX$) in an open set $D \subseteq \Rd$ if for every bounded open set $B \subseteq D$,
\[
u(x) = \E_x \big[\tau_B<\infty;u(X_{\tau_B})\big], \quad x \in B,
\]
where the integral on the right-hand side is assumed to be absolutely convergent. If the identity above is satisfied with $B=D$, then $u$ is regular harmonic in $D$. Also, we will say that $u$ is (regular) co-harmonic, if it is (regular) harmonic with respect to the dual process $\hbfX$. The probability distribution of the $X_{\tau_D}$ is the harmonic measure $P_D(x,\,\cdot\,)$, that is
\[
\P_x(X_{\tau_D} \in A) = \int_A \, P_D(x,\od y), \quad A \subset \Rd.
\]
Thus, a regular harmonic function $u$ satisfies
\[
u(x) = \int_{D^c} u(y)\,P_D(x,\od y), \quad x \in D.
\]
The Green function $G_D(x,y)$ is given by
\[
G_D(x,y) = \int_0^\infty p_t^D(x,y)\ud t, \quad x,y \in D.
\]
We will say that $D$ is Greenian if $G_D$ is finite almost everywhere on $D \times D$. This is the case, if, for example, $D$ is bounded. If $\alpha<d$, then $\bfX$ is transient and every $D \subseteq \Rd$ is Greenian, see \cite[Chapter 7]{Sato99}. For $d=1$ and $\alpha \in (1,2)$ and every $D\neq\R$ is Greenian e.g. by Port \cite{Port67}. If $\alpha=d=1$, then $\bfX$ is a symmetric Cauchy process and the explicit formula for the Green function of the half-line was given by M. Riesz, see, for example, Blumenthal et al. \cite{MR0126885}. On the other hand, in this case singletons are polar sets by \cite[Remark 43.6 and Example 43.7]{Sato99} and consequently, neither $\R$ nor $\R \setminus \{0\}$ are Greenian. For details we refer the reader to \cite[Chapter 8]{Sato99}.

Throughout the paper we assume that $G_D(x,y)=0$ whenever $x \in D^c$ or $y \in D^c$. The scaling property of $p^D$ \eqref{eq:dhk_scaling2} implies the scaling of $G_D$ in the following form:
\begin{equation}\label{eq:GD_scaling}
G_{rD}(rx,xy) = r^{\alpha-d}G_D(x,y), \quad r>0,\,x,y \in D.
\end{equation}
Moreover, observe that by the strong Markov property, for every $B \subset D$,
\[
p_t^D(x,y) = p_t^B(x,y) + \E_x [\tau_B<t;p_{t-\tau_B}^D(X_{\tau_B},y)].
\]
Integrating this identity with respect to $\od t$ yields
\[
G_D(x,y) = G_B(x,y) + \E_x G_D(X_{\tau_B},y), \quad x,y \in D.
\]
Therefore, $x \mapsto G_D(x,y)$ is regular harmonic in $B$ (and consequently: continuous in $B$, c.f. \cite[Theorem 6.7]{Vondracek02}), provided that $y \in D \setminus \overline{B}$. One also concludes that $G_B(x,y) \leq G_D(x,y)$ for all $x,y \in \Rd$. We note in passing that if $\alpha<d$, then (finite) $U:=G_{\Rd}$ is called the potential kernel of $\bfX$. It follows from the scaling property \eqref{eq:hk_scaling} together with \eqref{eq:11} that
\begin{equation*}
U(x,y) \approx |x-y|^{\alpha-d}, \quad x \neq y,
\end{equation*}
i.e. the potential kernel of $\bfX$ is comparable to the one of the isotropic $\alpha$-stable L\'{e}vy process. The analogous property for the Green function of an arbitrary open set holds away from the boundary of $D$, see e.g. \cite[Theorem 4.4]{Vondracek02}. For more information we refer the reader to \cite{Vondracek02}.

Let us comment on the harmonic measure $P_D$. The celebrated Ikeda-Watanabe formula \cite{IW62} states that for $x \in D$, the joint distribution of $(\tau_D,X_{\tau_D-},X_{\tau_D})$ restricted to the event $\{\tau_D<\infty, X_{\tau_D-} \neq X_{\tau_D}\}$ has a density function given by
\begin{equation}\label{eq:IW_density}
(0,\infty) \times D \times \overline{D}^c \ni (s,u,z) \mapsto p^D_s(x,u)\nu(z-u).
\end{equation}
In particular, the distribution of $X_{\tau_D}$ on $\overline{D}^c$ has a density function called the Poisson kernel of $D$:
\begin{equation}\label{eq:IW}
P_D(x,z) = \int_D G_D(x,y)\nu(z-y)\,\od y, \quad x \in D, z \in \overline{D}^c.
\end{equation}
Therefore, under the condition that the process does not hit the boundary when exiting the set $D$, i.e. when $\P_x(X_{\tau_D} \in \partial D)=0$, by the discussion above one has
\begin{equation*}
\P_x(\tau_D \in U) = \int_U P_D(x,z)\,\od z, \quad U \subseteq \Rd.
\end{equation*}
This is the case if $D$ is a Lipschitz domain, see Sztonyk \cite[Theorem 1.1 and the discussion below it]{Sztonyk00} and Millar \cite{Millar75}.

To simplify the notation, we denote the Poisson kernel of the ball $B_r$ by $P_r$. The following immediate extension of \cite[Lemma 5.3]{Vondracek02} will be useful in what follows.
\begin{proposition}\label{prop:Pr_est}
	Let $r>0$ and $\epsilon>0$. For all $y \in B_r$ and $z \in B_{(1+\epsilon)r}^c$ we have
	\[
	\theta \bigg( \frac{\epsilon}{1+\epsilon}\bigg)^{d+\alpha} \frac{\E_y \tau_{B_r}}{|z|^{d+\alpha}} \leq P_r(y,z) \leq \theta^{-1} \bigg( \frac{2+\epsilon}{1+\epsilon} \bigg)^{d+\alpha} \frac{\E_y \tau_{B_r}}{|z|^{d+\alpha}}.
	\]
\end{proposition}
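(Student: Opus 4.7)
The strategy is to combine the Ikeda--Watanabe representation \eqref{eq:IW} for the Poisson kernel with the two-sided pointwise bounds \eqref{eq:nu_approx} on the L\'evy density, and to exploit the elementary identity $\int_{B_r} G_{B_r}(y,u)\,\od u = \E_y \tau_{B_r}$, which follows at once from \eqref{eq:surv_prob} and Fubini after writing $G_{B_r}(y,u) = \int_0^\infty p_t^{B_r}(y,u)\,\od t$.

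The heart of the matter is a simple geometric observation. For $u \in B_r$ (so $|u|<r$) and $z \in B_{(1+\epsilon)r}^c$ (so $|z| \geq (1+\epsilon)r$, equivalently $r \leq |z|/(1+\epsilon)$), the triangle inequality yields the uniform-in-$u$ bounds
\[
\frac{\epsilon}{1+\epsilon}\,|z| \;\leq\; |z|-r \;\leq\; |z-u| \;\leq\; |z|+r \;\leq\; \frac{2+\epsilon}{1+\epsilon}\,|z|.
\]
Raising to the power $-(d+\alpha)$ and combining with \eqref{eq:nu_approx} produces $u$-uniform two-sided bounds for $\nu(z-u)$ of the form $c\,|z|^{-d-\alpha}$, where the multiplicative constants depend only on $\theta^{\pm 1}$ and powers of $\epsilon/(1+\epsilon)$ and $(2+\epsilon)/(1+\epsilon)$.

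I would then substitute these bounds into
\[
P_r(y,z) = \int_{B_r} G_{B_r}(y,u)\,\nu(z-u)\,\od u,
\]
pull the $u$-uniform factors out of the integral, and apply $\int_{B_r} G_{B_r}(y,u)\,\od u = \E_y \tau_{B_r}$ to obtain the two-sided estimate in the proposition. There is no serious analytic obstacle; the whole proof is essentially an exercise in the triangle inequality and careful bookkeeping of the constants arising from the two-sided estimate of $|z-u|/|z|$, mirroring the argument of \cite[Lemma 5.3]{Vondracek02}.
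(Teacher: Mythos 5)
Your strategy — Ikeda--Watanabe representation, the two-sided bound \eqref{eq:nu_approx} on $\nu$, the triangle inequality, and $\int_{B_r} G_{B_r}(y,u)\,\od u = \E_y\tau_{B_r}$ — is exactly the standard proof, and is what the paper implicitly invokes by citing \cite[Lemma 5.3]{Vondracek02}. So the approach is right. However, if you actually do the ``careful bookkeeping'' you promise at the end, you will find that it does \emph{not} reproduce the constants as printed. From $\tfrac{\epsilon}{1+\epsilon}|z| \leq |z-u| \leq \tfrac{2+\epsilon}{1+\epsilon}|z|$ and the fact that $t\mapsto t^{-(d+\alpha)}$ is \emph{decreasing}, the lower bound on $\nu(z-u)$ comes from the \emph{upper} bound on $|z-u|$, and vice versa; hence one gets
\[
\theta\Big(\tfrac{1+\epsilon}{2+\epsilon}\Big)^{d+\alpha}\frac{\E_y\tau_{B_r}}{|z|^{d+\alpha}}
\;\leq\; P_r(y,z) \;\leq\;
\theta^{-1}\Big(\tfrac{1+\epsilon}{\epsilon}\Big)^{d+\alpha}\frac{\E_y\tau_{B_r}}{|z|^{d+\alpha}},
\]
i.e.\ the \emph{reciprocals} of the fractions appearing in the proposition. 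Since $\tfrac{\epsilon}{1+\epsilon}<\tfrac{1+\epsilon}{2+\epsilon}$, the lower bound you obtain is actually stronger than the one stated, so that part is fine; but $\tfrac{2+\epsilon}{1+\epsilon}<\tfrac{1+\epsilon}{\epsilon}$, so the stated upper-bound constant is strictly \emph{smaller} than the one your argument (or any triangle-inequality argument) yields — try $\epsilon=1$: your estimate gives $\theta^{-1}2^{d+\alpha}$, the proposition claims $\theta^{-1}(3/2)^{d+\alpha}$. This is a misprint in the proposition (the two constants have evidently been transposed), not a gap in your reasoning; but you should carry out the last step explicitly rather than waving at it, precisely because the inversion under a negative power is where the fractions flip and the discrepancy becomes visible.
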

In particular, it follows by Pruitt's estimates \cite{Pruitt81} and \eqref{eq:h} that $P_r(y,z)$ is strictly positive for all $y \in B_r$ and $z \in \overline{B}_r^c$.

\subsection{Properties of harmonic functions}
Let us recall two basic tools of the potential theory. First, by \cite[Theorem 6.7 and Corollary 6.9]{Vondracek02} (see also Song and Vondra\^{c}ek \cite{SV04}), for any connected open set $D$ and any compact subset $K \subseteq D$ there is a constant $c = c(d,\alpha,\lambda,D,K)$ such that for every function $h$ which is non-negative in $\Rd$ and harmonic in $D$,
\begin{equation}\label{eq:HI}\tag{HI}
	h(x) \leq c h(y), \quad x,y \in K.
\end{equation}
The property \eqref{eq:HI} is the \emph{Harnack inequality} and it describes the behaviour of harmonic functions inside the domain of harmonicity. Note that for the symmetric case, this follows also from a seminal paper of Bass and Levin \cite{BassLevin02}. The phenomena close to the boundary are governed by the \emph{boundary Harnack principle}, which we now invoke. Let $D$ be an arbitrary open set. In view of Bogdan et al. \cite[Example 5.5]{BKK15}, the global scale-invariant boundary Harnack inequality holds for $\bfX$, i.e. for every $r>0$ and all non-negative functions $f,g$ which are regular harmonic in $D \cap B(x_0,2r)$ and vanish on $D^c \cap B(x_0,2r)$ one has
\begin{equation}\label{eq:BHP}\tag{BHP}
	\frac{f(x)}{g(x)} \leq \cbhi \frac{f(y)}{g(y)}, \quad x,y \in D \cap B(x_0,r),
\end{equation}
where $\cbhi =\cbhi(\alpha,d,\lambda)$. Note that all the assumptions are symmetric for $\bfX$ and $\hbfX$ and consequently, \eqref{eq:HI} and \eqref{eq:BHP} hold for regular co-harmonic functions, too.

The following lemma gives the upper bound on the growth of harmonic functions close to the boundary and is crucial for our development. It is a generalisation of \cite[Lemma 5]{KB97}, where the isotropic case was considered. We point out that, as in the rotation-invariant setting, the growth order $\gamma$ is strictly less than $\alpha$. 
\begin{lemma}\label{lem:harmonic_growth_bound}
	Let $R \in (0,\infty]$ and assume that an open set $D$ is $(\kappa,r)$-fat for some $\kappa \in (0,1)$ and all $r \in (0,R)$. There are $c = c(\alpha,d,\lambda,\kappa,R) \in (0,\infty)$ and $\gamma \in [0,\alpha)$ such that for all boundary points $x_0 \in \partial D$, all $r \in (0,R)$ and all non-negative functions $f$ which are regular harmonic in $D \cap B(x_0,r)$,
	\[
	f(A_s(x_0)) \geq c \bigg(\frac{|A_s(x_0)-x_0|}{r}\bigg)^\gamma f(A_r(x_0)), \quad s \in (0,r).
	\]
\end{lemma}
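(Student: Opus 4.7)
My plan is to adapt the iterative strategy of \cite[Lemma 5]{KB97} to the present non-symmetric framework. Without loss of generality take $x_0 = 0$, and introduce dyadic scales $r_k = 2^{-k} r$ together with interior points $A_k = A_{r_k}(0)$, so that $B(A_k, \kappa r_k) \subseteq D \cap B(0, r_k)$, furnished by the $\kappa$-fatness. A standard Harnack-chain argument inside a ball of radius $\kappa r_n/2$ sitting inside $D$ handles the interpolation from the dyadic sequence to a general $s \in (r_{n+1}, r_n]$, so the proof reduces to a one-step bound
\[
f(A_{k+1}) \geq c_0 \, f(A_k), \qquad k = 0, 1, 2, \ldots,
\]
with a universal constant $c_0 = c_0(\alpha, d, \lambda, \kappa) \in (2^{-\alpha}, 1)$ independent of $k$ and of $f$. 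Iterating yields $f(A_n) \geq c_0^n f(A_0) = (r_n/r)^{\gamma} f(A_0)$ with $\gamma := \log_2(1/c_0) \in [0, \alpha)$, and $|A_n - x_0| \leq r_n$ then gives the claimed inequality.

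To establish the one-step estimate I would invoke the boundary Harnack principle \eqref{eq:BHP} at scale $r_{k-1}$, comparing $f$ against a reference harmonic function $g$. A convenient candidate is the harmonic measure of a far target, e.g.\ $g(x) = \P_x(X_{\tau_{D \cap B(0, 2 r_{k-1})}} \in B(0, R)^c)$ for $R$ of order $r$; this $g$ is non-negative, regular harmonic in $D \cap B(0, 2 r_{k-1})$, and vanishes on $D^c \cap B(0, 2 r_{k-1})$, so BHP applies. Combining the Ikeda--Watanabe formula \eqref{eq:IW}, the L\'{e}vy-density bounds \eqref{eq:nu_approx}, and Pruitt's estimate \eqref{eq:h} yields $g(A_k) \approx R^{-\alpha} \, \E_{A_k} \tau_{D \cap B(0, 2 r_{k-1})}$; the two-sided control on the latter expected exit time, which the $\kappa$-fatness provides, delivers a ratio $g(A_{k+1})/g(A_k) > \cbhi \cdot 2^{-\alpha}$. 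Inserting this into BHP gives $c_0 = \cbhi^{-1}\, g(A_{k+1})/g(A_k) > 2^{-\alpha}$, as required.

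Because the statement does not require $f$ to vanish on $D^c \cap B(0, r)$, I would first decompose $f = f_1 + f_2$ on $V = D \cap B(0, r)$ according to whether $X_{\tau_V}$ exits through $D^c \cap B(0, r)$ or jumps into $B(0, r)^c$, noting that the inequality to be proved is additive in the pieces. The far-jump component $f_2$ is handled directly by Ikeda--Watanabe: the near-constancy $\nu(z-y) \approx \nu(z)$ for $|z| \geq 2r$ and $y \in V$ gives $f_2(A_s)/f_2(A_r) \approx \E_{A_s}\tau_V / \E_{A_r}\tau_V$, which is controlled from below by the same fatness-based exit-time estimate as above. The piece $f_1$, which vanishes on $B(0, r)^c$, falls within the scope of the iterative BHP argument.

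The main obstacle is the quantitative step $c_0 > 2^{-\alpha}$, i.e.\ the statement that $\E_x \tau_{D \cap B(0, 2 r_{k-1})}$ decays in $\dist(x, \partial D)$ strictly slower than the $\alpha$-th power. This is what forces $\gamma < \alpha$ and is the essential non-trivial content of the lemma. In \cite{KB97} this is transparent from explicit formulas for the isotropic Poisson kernel of the ball; in the present non-symmetric setting one must glue together Proposition \ref{prop:Pr_est}, the identity \eqref{eq:h}, and the $\kappa$-fatness of $D$ to produce uniform, scale-invariant two-sided bounds on the expected exit time across the dyadic sequence. Once this quantitative piece is in hand, the iteration and Harnack-chain interpolation at scale $r_n$ give the full conclusion.
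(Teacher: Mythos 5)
You have correctly set up the reduction to a dyadic scheme and identified the crux (the one-step constant $c_0$ must exceed $2^{-\alpha}$), but the argument you offer for this crucial inequality does not work, and this is exactly where the content of the lemma lies. Your plan is to compare $f$ via \eqref{eq:BHP} to the reference function $g(x) = \P_x\big(X_{\tau_{D\cap B(0,2r_{k-1})}}\in B(0,R)^c\big)$ and then to control $g(A_{k+1})/g(A_k)$ by two-sided bounds on the expected exit time. However, the exit-time estimates obtainable from Proposition~\ref{prop:Pr_est}, \eqref{eq:h} and $\kappa$-fatness only give $\E_{A_j}\tau_{D\cap B(0,2r_{k-1})}\approx r_j^{\alpha}$ up to multiplicative constants depending on $d,\alpha,\lambda,\kappa$. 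Consequently $g(A_{k+1})/g(A_k)$ is only sandwiched between $c_-\,2^{-\alpha}$ and $c_+\,2^{-\alpha}$ with $c_-<1<c_+$; nothing forces the ratio to exceed $2^{-\alpha}$, and after dividing out the $\cbhi$ from \eqref{eq:BHP} the situation gets worse, not better. In fact, the assertion that $\E_x\tau_V$ decays strictly slower than $\delta_D(x)^\alpha$ is itself a growth bound for a harmonic function vanishing on part of the boundary, i.e.\ it is the lemma applied to $g$ --- the argument becomes circular. No single-scale comparison can yield the strict improvement $\gamma<\alpha$; you need information from \emph{all} intermediate scales simultaneously.

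This is precisely what the paper's proof supplies. Writing $f(A_k)=\int_{D_k^c}f(z)P_{D_k}(A_k,z)\,\od z$ for $D_k=B(A_k,r_{k+1})$ and summing over the shells $D_l$ for \emph{every} $l<k$ (using the Harnack inequality \eqref{eq:HI} to replace $f$ by $f(A_l)$ on $D_l$, and Proposition~\ref{prop:Pr_est} with \eqref{eq:h} for the Poisson kernel) yields the recursion
\[
f(A_k)\;\geq\;c_3\sum_{l=0}^{k-1}T^{-(k-l)}\,f(A_l),\qquad T=2/\kappa,
\]
and it is the \emph{accumulation of contributions from all earlier levels} in this linear recursion, resolved exactly as in \cite[Lemma 5]{KB97}, that produces a lower-bound rate $T^{-\gamma k}$ with $\gamma<\alpha$. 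You cannot reach that conclusion from a universal one-step inequality $f(A_{k+1})\geq c_0 f(A_k)$ obtained by local means, because such a $c_0$ would have to encode the whole multi-scale effect. A further, secondary issue is your decomposition $f=f_1+f_2$: the piece $f_1(x)=\E_x[X_{\tau_V}\in D^c\cap B(0,r);\,f(X_{\tau_V})]$ equals $f$ on $D^c\cap B(0,r)$, hence does \emph{not} vanish there, so \eqref{eq:BHP} does not apply to it as you suggest. The paper's direct use of the exit representation from the small balls $D_k$ avoids this difficulty entirely, since it lower-bounds $f(A_k)$ without ever needing $f$ to vanish on $D^c$.
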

\begin{proof}
	We follow the proof of \cite[Lemma 5]{KB97}. In fact, the only difference is the derivation of the inequality above \cite[eq. (3.32)]{KB97}. Once this relation is proved, the proof will follow by exactly the same arguments. 
	
	We apply the analogous notation and geometry. First, without loss of generality we may assume that $x_0=0$, $r=1$ and $f(A_r(0))=1$. Let $T = 2/\kappa$. For $k=0,1,\ldots$, we set $r_k = T^{-k}$, $A_k = A_{r_k}(0)$ and $D_k = B(A_k,r_{k+1})$. We then clearly have for $k>l \geq 0$ and $y \in D_l$ that $|D_l| \geq c_1 T^{-dl}$, $\diam(D_k) \geq c_1 T^{-k}$ and $\dist(A_k,y) \leq c_1 T^{-l}$ for some constant $c_1=c_1(d,\kappa)$. Next, by the Harnack inequality \eqref{eq:HI}, there is $c_2=c_2(d,\alpha,\lambda,\kappa,R)$ such that
	\[
	\frac{f(x)}{f(y)} \leq c_2, \quad x,y \in D_k, \, k=0,1,\ldots.
	\]
	It follows that
	\begin{align*}
		f(A_k) &= \int_{D_k^c} f(z) P_{D_k}(A_k,z)\,\od z \geq c_2^{-1} \sum_{l=0}^{k-1} f(A_l) \int_{D_l}P_{r_{k+1}}(0,z-A_k)\,\od z.
	\end{align*}
	Observe that for $z \in D_l$, $l=0,1,\ldots,k-1$, one also has $|z-A_k|\geq \kappa r_k$. Therefore, using Proposition \ref{prop:Pr_est} with $\epsilon = (\kappa r_k)/r_{k+1}-1=1$, together with \eqref{eq:h}, we conclude that
	\[
	f(A_k) \geq c_3 \sum_{l=0}^{k-1}T^{-(k-l)}f(A_l),
	\]
	where $c_3 = c_3(d,\alpha,\lambda,\kappa)$. With the inequality above at hand, one can directly copy the proof of \cite[Lemma 5]{KB97} to conclude the claim, where in place of \cite[Lemma 2]{KB97} one may use \cite[Lemma 6.8]{Vondracek02} or simply apply the (classical) Harnack inequality \eqref{eq:HI} .
\end{proof}

\section{Factorisation of the Dirichlet heat kernel}\label{sec:dhk}
This section is devoted to the proof of Theorem \ref{thm:dhk}. In what follows it will be crucial to exploit the geometric properties of $\kappa$-fat sets. Recall that $D$ is $\kappa$-fat, if there is $R>0$ such that for every $r \in (0,R)$ and $x \in \overline{D}$ there is a point $A_r(x)$ such that $B(A_r(x),\kappa r) \subseteq B(x,r) \cap D$. We then apply the following standard notation, c.f. \cite[Definition 2]{KBTGMR10} and \cite[Figure 1]{CKS14}. Namely, for $x \in D$ and $r>0$, we let  
\[
U^{x,r}=B(x,|x-A_r(x)|+\kappa r/3) \cap D, \qquad B^{x,r}_1 = B(A_r(x),\kappa r/3),
\]
so that $B_1^{x,r} \subset U^{x,r}$. We also set $A'_r(x)$ and $B^{x,r}_2 = B(A'_r(x),\kappa r/6)$ so that $B(A'_r(x),\kappa r/3) \subset B(A_r(x), \kappa r) \setminus U^{x,r}$ and consequently $\dist(U^{x,r},B_2^{x,r}) \geq \kappa r/6$. Finally we let $V^{x,r} = B(x,|x-A_r(x)|+\kappa r) \cap D$. The structures above are presented on Figure \ref{fig:1}.
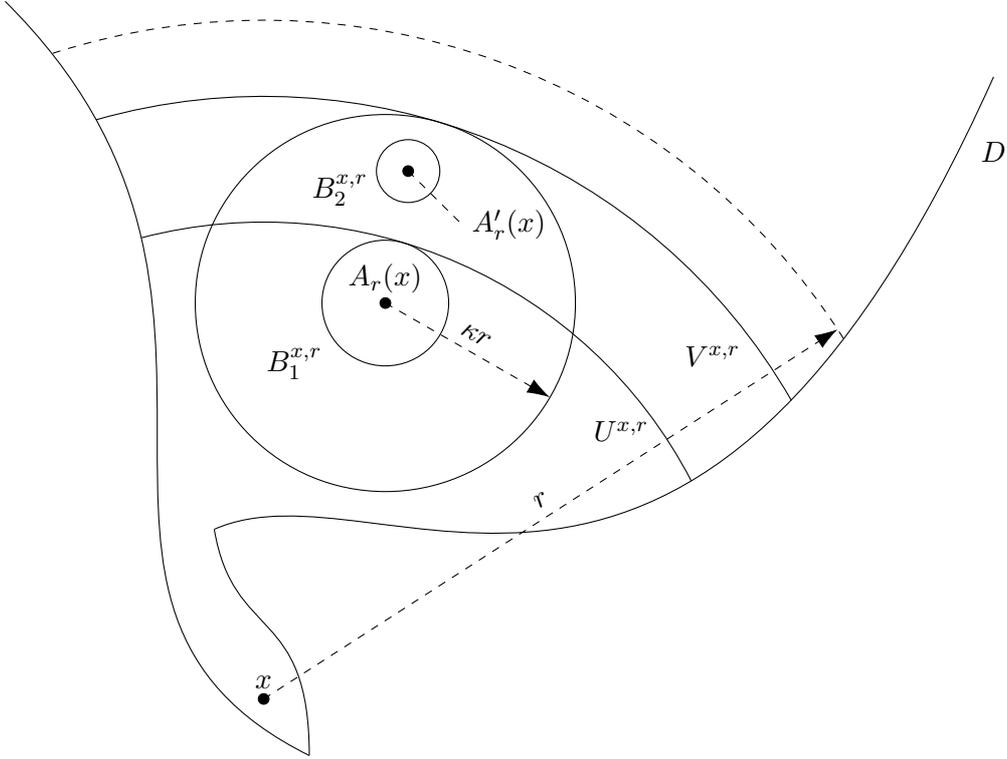
\begin{figure}[H]
	\centering
	\begin{tikzpicture}
		\draw[name path=set] (-1,2) .. controls (3,-2) and (-1,-6) .. (3,-8);
		\draw (3,-8) .. controls (3,-6) and (2,-6.5) .. (1.75,-5);
		\draw (1.75,-5) .. controls (4,-4) and (8,-8) .. (12,1);
		\filldraw (2.4,-7.25) circle (2pt) node[above] {$x$} coordinate (X);
		\draw[dashed, name path=arc] ([shift=(32.1:9)]X) arc (32.1:108:9);
		\draw[postaction={decorate, decoration={text along path, text={$r${}}, text align={center}, raise = 4pt}}, dashed, -{Latex[length=3mm,width=2mm]}] (X) -- +(33:9cm);
		\filldraw (4,-2) circle (2pt) node[above] {$A_r(x)$} coordinate (A);
		\draw (A) circle (2.5);
		\draw[postaction={decorate, decoration={text along path, text={${\kappa}r${}}, text align={center}, raise = 4pt}}, dashed, -{Latex[length=3mm,width=2mm]}] (A) -- +(330:2.5cm);
		\draw (A) circle (2.5/3); 
		\draw ([shift=(27.2:5.49+2.5/3)]X) arc (27.2:104.7:5.49+2.5/3);
		\draw ([shift=(29.7:5.49+2.5)]X) arc (29.7:106:5.49+2.5);
		\filldraw (4.3,-0.25) circle (2pt) node[above] {} coordinate (A1);
		\draw (A1) circle (2.5/6);
		\draw[dashed] (A1) -- +(315:1cm) node[right] {$A_r'(x)$};
		\node at (7.1,-3.7) {${U^{x,r}}$};
		\node at (8.3,-2.7) {${V^{x,r}}$};
		\node at (2.8,-2.8) {${B_1^{x,r}}$};
		\node at (3.4,-0.5) {${B_2^{x,r}}$};
		\node at (12,0) {$D$};
	\end{tikzpicture}
	\caption{Illustration of geometric structure of $\kappa$-fat sets}
	\label{fig:1}
\end{figure}
We start with a following observation on the survival probability.
\begin{lemma}\label{lem:surv_prop_est}
	There is a constant $c=c(\alpha,d,\lambda,\kappa)$ such that if $D$ is $(\kappa,1)$-fat in $x$, then
	\[
	\P_x(\tau_D>1/3) \leq c \P_x(\tau_D > 3).
	\]
\end{lemma}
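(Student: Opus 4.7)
The plan is to apply the Markov property at time $1/3$, thereby reducing the inequality to the assertion that a positive fraction of the surviving mass is concentrated at time $1/3$ in a distinguished interior ball, and to establish that assertion via the Ikeda--Watanabe formula.

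Let $A=A_1(x)$, so that by $(\kappa,1)$-fatness $B(A,\kappa)\subseteq D\cap B(x,1)$, and set $B:=B(A,\kappa/2)$. The Markov property at $t=1/3$ gives
\[
\P_x(\tau_D>3)=\int_D p_{1/3}^D(x,y)\P_y(\tau_D>8/3)\,\od y\geq \int_B p_{1/3}^D(x,y)\P_y(\tau_D>8/3)\,\od y.
\]
For $y\in B$ we have $B(y,\kappa/2)\subseteq B(A,\kappa)\subseteq D$, so translation invariance gives $\P_y(\tau_D>8/3)\geq \P_0(\tau_{B(0,\kappa/2)}>8/3)=:c_1>0$. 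Hence the claim reduces to showing
\[
\P_x(X_{1/3}\in B,\tau_D>1/3)\geq c_2\,\P_x(\tau_D>1/3),
\]
with some $c_2=c_2(\alpha,d,\lambda,\kappa)>0$.

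For this, I would split the survival event according to the first hitting time $T_B$ of $B$ by the free process: with $V:=D\setminus\overline{B}$,
\[
\P_x(\tau_D>1/3)=\P_x(T_B\leq 1/3,\tau_D>1/3)+\P_x(\tau_V>1/3).
\]
By the strong Markov property at $T_B$ and the uniform positive lower bound $\inf\{\P_y(X_u\in B,\tau_{B(A,\kappa)}>u):y\in B,\,u\in[0,1/3]\}>0$ (continuity plus compactness applied to a quantity governed by the ball $B(A,\kappa)\subseteq D$), the first summand is comparable with $\P_x(X_{1/3}\in B,\tau_D>1/3)$. For the second, I apply the Ikeda--Watanabe formula \eqref{eq:IW_density} to $V^R:=V\cap B(A,R)$: the lower bound $\nu(z-u)\geq \theta(R+\kappa/2)^{-d-\alpha}$ valid for $u\in V^R$, $z\in\overline{B}$, gives
\[
\P_x(X_{\tau_V}\in\overline{B},\tau_V\leq 1/3)\gtrsim R^{-d-\alpha}\,\E_x[\tau_{V^R}\wedge 1/3]\gtrsim R^{-d-\alpha}\bigl[\P_x(\tau_V>1/3)-\P_0\bigl(\sup\nolimits_{s\leq 1/3}|X_s|>R-1\bigr)\bigr],
\]
while $\P_x(X_{\tau_V}\in\overline{B},\tau_V\leq 1/3)\leq \P_x(T_B\leq 1/3,\tau_D>T_B)\lesssim \P_x(X_{1/3}\in B,\tau_D>1/3)$ by the strong Markov step.

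The main obstacle is the Ikeda--Watanabe estimate: $R$ must be chosen simultaneously small enough so that $\nu(z-u)$ is nontrivially bounded below on $V^R$, and large enough so that the free-process tail $\P_0(\sup_{s\leq 1/3}|X_s|>R-1)$ is dominated by $\P_x(\tau_V>1/3)$. A case analysis separating the regime $\P_x(\tau_V>1/3)\geq \varepsilon\,\P_x(\tau_D>1/3)$ from its complement is needed to handle this trade-off and complete the argument.
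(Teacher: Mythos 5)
Your decomposition $\P_x(\tau_D>1/3)=\P_x(T_B\leq 1/3,\tau_D>1/3)+\P_x(\tau_V>1/3)$ after using the Markov property at time $1/3$ is sound in outline and parallels the paper's splitting $\P_x(\tau_D>1/3)\leq\P_x(\tau_U>1/3)+\P_x(X_{\tau_U}\in D)$. Two issues, one minor and one decisive.

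Minor: the uniform infimum $\inf\{\P_y(X_u\in B,\tau_{B(A,\kappa)}>u):y\in B,u\in[0,1/3]\}$ as stated is zero, because for $y\in\partial B$ the quantity tends to $\ind_B(y)=0$ as $u\to 0^+$, and $X_{T_B}$ may sit on $\partial B$. This is fixable by retargeting: instead bound $\P_y(X_u\in B(A,3\kappa/4),\tau_{B(A,\kappa)}>u)$ for $y\in\overline B$; since $\overline B$ lies strictly inside $B(A,3\kappa/4)$, the limit at $u=0^+$ is $1$ on all of $\overline B$, and compactness applies. One then runs the Markov step at $t=1/3$ with $B(A,3\kappa/4)$ in place of $B$.

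Decisive: the Ikeda--Watanabe step to dominate $\P_x(\tau_V>1/3)$ cannot close, and the proposed case analysis does not help. Your bound has the shape
\[
\P_x(X_{\tau_V}\in\overline B,\tau_V\leq 1/3)\gtrsim (R+\kappa/2)^{-d-\alpha}\Bigl[\P_x(\tau_V>1/3)-\P_0\Bigl(\sup_{s\leq 1/3}|X_s|>R-2\Bigr)\Bigr].
\]
The free tail is of order $R^{-\alpha}$ and does not depend on $x$, so making it subdominant forces $R\gtrsim\P_x(\tau_V>1/3)^{-1/\alpha}$, whence the prefactor is of order $\P_x(\tau_V>1/3)^{(d+\alpha)/\alpha}$ and the bound degrades to $\P_x(X_{\tau_V}\in\overline B)\gtrsim\P_x(\tau_V>1/3)^{1+(d+\alpha)/\alpha}$, far weaker than what you need. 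Splitting on $\P_x(\tau_V>1/3)\gtrless\varepsilon\,\P_x(\tau_D>1/3)$ disposes of the ``small'' case immediately by the decomposition, but in the ``large'' case you still have no absolute lower bound on $\P_x(\tau_V>1/3)$; both it and $\P_x(\tau_D>1/3)$ can be arbitrarily small for $x$ near $\partial D$, and the required $R$ still diverges.

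The ingredient your argument is missing is the boundary Harnack principle. In the paper's proof, after writing $\P_x(\tau_D>1/3)\leq\P_x(\tau_U>1/3)+\P_x(X_{\tau_U}\in D)$ with the bounded set $U=U^{x,1}$, the Markov inequality and Ikeda--Watanabe give $\P_x(\tau_U>1/3)\lesssim\E_x\tau_U\lesssim\P_x(X_{\tau_U}\in B_2)$ (which is the part you can reproduce), while the term $\P_x(X_{\tau_U}\in D)$ is dominated by $\P_x(X_{\tau_U}\in B_2)$ via \cite[Remark 3.8]{BKK15}: both are non-negative regular harmonic functions of $x$ in $U$ vanishing on $D^c\cap B(x,|x-A|+\kappa/3)$, so their ratio is controlled uniformly. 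This is exactly the scale-free, $x$-uniform comparison of ``harmonic measure of a large target'' to ``harmonic measure of a fixed small ball'' that Ikeda--Watanabe alone cannot deliver. Without invoking \eqref{eq:BHP} or an equivalent Carleson-type estimate, the argument does not close.
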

\begin{proof}
	Let $x \in D$ and denote $A=A_1(x)$, $U = U^{x,1}$ and $B_2 = B_2^{x,1}$. For $|x-A|<\kappa/2$ we have $B(x,\kappa/2) \subseteq D$ and
	\[
	1 \geq \P_x(\tau_D>1/3) \geq \P_x(\tau_D>3) \geq \P_x\Big(\tau_{B_{(x,\kappa/2)}}>3\Big) = \P_0(\tau_{B_{\kappa/2}}>3)=c>0,
	\]
	with the constant $c$ depending on $\alpha$, $d$, $\lambda$ and $\kappa$. For $|x-A| \geq \kappa/2$ we write
	\begin{equation}\label{eq:2}
	\P_x(\tau_D>1/3) \leq \P_x(\tau_U>1/3) + \P_x(X_{\tau_U} \in D).
	\end{equation}
	Observe that for $B = B(x,|x-A|+\kappa/3)$ we have $U \subseteq B$ and consequently,
	\[
	\P_x(X_{\tau_U} \in \partial U \cap D) \leq \P_x(X_{\tau_B} \in \partial B) = 0,
	\]
	see e.g. \cite[Section 4]{Sztonyk00} or \cite[Proposition 5.2]{Vondracek02}. Therefore,
	\begin{equation*}
	\P_x(X_{\tau_U} \in D) = \P_x(X_{\tau_U} \in D \setminus \overline{U}).
	\end{equation*}
	Therefore, by \cite[Remark 3.8]{BKK15} with $x_0=x$, $y=A$, $R=|x-A|+\kappa/3$, $r=R-\kappa/6$, $E_1 = D \setminus \overline{U}$ and $E_2 = B_2$ in the notation of \cite[Remark 3.8]{BKK15}, we get that
	\[
	\frac{\P_x(X_{\tau_U} \in D)}{\P_A(X_{\tau_U} \in D)} \leq c\frac{\P_x(X_{\tau_U} \in B_2)}{\P_A(X_{\tau_U} \in B_2)}
	\]
	with $c=c(d,\alpha,\lambda,\kappa)$. Next, we infer from \eqref{eq:h} and Proposition \ref{prop:Pr_est} that $P_{B_1}(A,\,\cdot\,)$ is strictly positive on $\overline{B}_1^c$; therefore,
	\[
	\P_A(X_{\tau_U} \in B_2) \geq \P_A(X_{\tau_{B_1}} \in B_2) = c>0.
	\]
	Thus,
	\begin{equation}\label{eq:20}
	\P_x(X_{\tau_U} \in D) \leq c \P_x(X_{\tau_U} \in B_2).
	\end{equation}
	Moreover, note that $|y-u| \approx 1$ for $y \in B_2$ and $u \in U$. By the Ikeda-Watanabe formula \eqref{eq:IW} and \eqref{eq:nu_approx},
	\[
	\P_x (X_{\tau_U} \in B_2) = \int_U G_U(x,u)\int_{B_2} \nu(y-u)\ud y \ud u \approx \int_U G_U(x,u)\ud u = \E_x \tau_U. 
	\]
	The Markov inequality yields $\P_x(\tau_U>1/3) \leq 3\E_x \tau_U$. Combining with \eqref{eq:2} and \eqref{eq:20}, we get
	\[
	\P_x(\tau_D>1/3) \leq c\E_x \tau_U,
	\]
	and by the strong Markov property,
	\begin{align}\label{eq:22}
		\E_x\tau_U \leq c \P_x (X_{\tau_U} \in B_2) \leq c \E_x \Big[ X_{\tau_U} \in B_2; \P_{X_{\tau_U}}\big(\tau_{B(X_{\tau_U},\kappa/6)}>3\big) \Big] \leq c\P_x(\tau_D>3),
	\end{align}
	which ends the proof.
\end{proof}
\begin{remark}\label{rem:2} We note two observations from the proof of Lemma \ref{lem:dhk_partial_est}.
	\begin{enumerate}
		\item
		Let $V=V^{x,1}$. Then in \eqref{eq:22} one in fact has
		\[
		\E_x \tau_U \lesssim  \E_x \Big[ X_{\tau_U} \in B_2; \P_{X_{\tau_U}}\big(\tau_{B(X_{\tau_U},\kappa/6)}>3\big) \Big] \leq \P_x(\tau_V > 3) \leq \P_x (\tau_D>3).
		\]
		Clearly, if $D$ is $(\kappa,1)$-fat in $x$, it is $(\kappa/3,1)$-fat in $x$, too. Since $D \cap B(x,|x-A|+\kappa/3)=U$, by the proof above we obtain
		\begin{equation}\label{eq:23}
		\P_x(\tau_D>1/3) \approx \P_x(\tau_D>3) \approx \P_x(\tau_D>1) \approx \P_x(\tau_U>1) \approx \P_x(X_{\tau_U} \in D) \approx \E_x \tau_U.
		\end{equation}
		\item In fact, in \eqref{eq:23} we may replace $3$ by any constant $a\geq 1$ at the cost of the comparability constant depending also on $a$. We also note that the analogous results hold for the dual process $\hbfX^D$, too.
	\end{enumerate}

\end{remark}
\begin{lemma}\label{lem:dhk_partial_est}
	Let $D_1, D_3 \subseteq D$ be open and such that $\dist(D_1,D_3)>0$. Denote $D_2 = D \setminus (D_1 \cup D_3)$. If $x \in D_1$ and $y \in D_3$, then
	\begin{align*}
		p_1^D(x,y) \leq \P_x(X_{\tau_{D_1}} \in D_2) \sup_{s <1, z \in D_2}p_s(z,y) + \E_x \tau_{D_1} \sup_{u \in D_1, z \in D_3} \nu(z-u) \\ \intertext{and} \\ p_1^D(x,y) \geq \P_x(\tau_{D_1}>1) \hP_y(\tau_{D_3}>1) \inf_{u \in D_1, z \in D_3} \nu(z-u).
	\end{align*}
\end{lemma}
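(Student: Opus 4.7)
The plan is to invoke the strong Markov decomposition $p_1^D(x,y) = p_1^{D_1}(x,y) + \E_x[\tau_{D_1}<1; p_{1-\tau_{D_1}}^D(X_{\tau_{D_1}}, y)]$ recalled earlier in Section \ref{sec:prelims} with the inner set $B = D_1$. Since $y \in D_3$ lies outside $D_1$ and $\dist(D_1, D_3)>0$, the first term $p_1^{D_1}(x,y)$ vanishes (by the convention that $p^D$ is supported on $D \times D$), and on the event $\{\tau_{D_1}<1\}$ the endpoint $X_{\tau_{D_1}}$ must fall either in $D_2$ or in $D_3$ (landing in $D^c$ contributes zero since $p^D$ is supported in $D$). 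Both bounds then reduce to controlling these two contributions via the free heat kernel and the Ikeda--Watanabe formula \eqref{eq:IW_density} respectively.

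For the upper bound, on the $D_2$-part I would use $p_{1-\tau_{D_1}}^D \leq p_{1-\tau_{D_1}}$ and replace the free heat kernel by its supremum over $s \in (0,1)$ and $z \in D_2$, leaving the factor $\P_x(X_{\tau_{D_1}} \in D_2)$. On the $D_3$-part the Ikeda--Watanabe formula expresses the contribution as
\[
\int_0^1 \int_{D_1} \int_{D_3} p_s^{D_1}(x,u)\, \nu(z-u)\, p_{1-s}^D(z,y) \,dz\,du\,ds,
\]
and bounding $\nu(z-u) \leq \sup_{u \in D_1, z \in D_3}\nu(z-u)$ factors the L\'evy weight out. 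The duality \eqref{eq:dual_dhk} gives $\int_{D_3} p_{1-s}^D(z,y)\,dz = \int_{D_3} \widehat{p}_{1-s}^D(y,z)\,dz \leq 1$, while $\int_{D_1} p_s^{D_1}(x,u)\,du = \P_x(\tau_{D_1}>s)$, and the remaining time integral is dominated by $\int_0^{\infty}\P_x(\tau_{D_1}>s)\,ds = \E_x\tau_{D_1}$, yielding the claimed upper bound.

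For the lower bound I would discard the $D_2$-contribution entirely and reuse the Ikeda--Watanabe representation, now bounding $\nu(z-u) \geq \inf_{u \in D_1, z \in D_3}\nu(z-u)$ and shrinking the domain in the second Dirichlet kernel via $p_{1-s}^D(z,y) \geq p_{1-s}^{D_3}(z,y)$. Duality then produces $\int_{D_3} p_{1-s}^{D_3}(z,y)\,dz = \hP_y(\tau_{D_3}>1-s)$. The final step is to handle the time integral $\int_0^1 \P_x(\tau_{D_1}>s)\,\hP_y(\tau_{D_3}>1-s)\,ds$: since both $s \mapsto \P_x(\tau_{D_1}>s)$ and $s \mapsto \hP_y(\tau_{D_3}>1-s)$ are decreasing in their respective arguments (and $s, 1-s$ both lie in $(0,1)$), each factor is pointwise at least its value at the right endpoint of its range, so the integrand is uniformly at least $\P_x(\tau_{D_1}>1)\,\hP_y(\tau_{D_3}>1)$; integration over an interval of length one gives exactly the claim.

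The single point that requires care is this last bookkeeping: a careless splitting of the time integral (for instance at $s=1/2$) would cost an unwanted numerical factor, whereas the simultaneous-monotonicity observation gives the stated lower bound with constant exactly one. Everything else is routine application of the strong Markov property, the Ikeda--Watanabe formula and the pointwise inequality $p^D \leq p$.
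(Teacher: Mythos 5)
Your proposal is correct and follows essentially the same route as the paper: strong Markov decomposition at $\tau_{D_1}$ into a $D_2$-exit term and a $D_3$-jump term, then the Ikeda--Watanabe density for the jump term, duality to identify $\int_{D_3}p_{1-s}^D(z,y)\,dz$ as a dual survival probability, and the elementary monotonicity bounds on the resulting time integral. The only thing the paper does that you omit is a brief approximation argument (taking $D_1$ with $\P_x(X_{\tau_{D_1}}\in\partial D_1\cap D)=0$, then exhausting a general $D_1$) to justify applying the Ikeda--Watanabe formula; since $D_3\subseteq\overline{D_1}^{\,c}$ under the hypothesis $\dist(D_1,D_3)>0$, so that exiting into $D_3$ forces a jump, this extra step is arguably cosmetic, and your argument is not weakened by skipping it.
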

\begin{proof}
	By the strong Markov property,
	\begin{align*}
	p_1^D(x,y) &= \E_x \Big[ \tau_{D_1}<1;p_{1-\tau_{D_1}}^D\big(X_{\tau_{D_1}},y\big) \Big] \\ &= \E_x \Big[ \tau_{D_1}<1, X_{\tau_{D_1}} \in D_2;p_{1-\tau_{D_1}}^D\big(X_{\tau_{D_1}},y\big) \Big] + \E_x \Big[ \tau_{D_1}<1,X_{\tau_{D_1}}\in D_3;p_{1-\tau_{D_1}}^D\big(X_{\tau_{D_1}},y\big) \Big] \\ &=:\mathrm{I}_1 + \mathrm{I}_2.
	\end{align*}
	We clearly have
	\[
	\mathrm{I}_1 \leq \P_x\big(X_{\tau_{D_1}} \in D_2\big) \sup_{s <1, z \in D_2} p_s(z,y).
	\]
	Let $D_1$ be such that $\P_x(X_{\tau_{D_1}} \in \partial D_1 \cap D) = 0$, for instance when $D_1=D \cap E$ for some Lipschitz domain $E$. Then the density of $(\tau_{D_1},X_{\tau_{D_1}})$ at $(s,z)$ with $z \in D$ is given by (see  \eqref{eq:IW_density})
	\[
	f_x(s,z) = \int_{D_1} p_s^{D_1}(x,u)\nu(z-u)\ud u.
	\]
	Then, using \eqref{eq:dual_dhk}, for $z \in D_3$,
	\begin{equation}\label{eq:4}
	f_x(s,z) = \int_{D_1} p_s^{D_1}(x,u)\nu(z-u)\ud u \leq \P_x(\tau_{D_1}>s)\sup_{u \in D_1, z \in D_3} \nu(z-u),
	\end{equation}
	hence, by \eqref{eq:3} and \eqref{eq:dual_dhk},
	\begin{align*}
		\mathrm{I}_2 &= \int_0^1 \int_{D_3} p_{1-s}^D(z,y)f_x(s,z)\ud z\ud s \\ &\leq \sup_{u \in D_1, z \in D_3} \nu(z-u) \int_0^1 \int_{D_3} \widehat{p}_{1-s}^D(y,z) \P_x(\tau_{D_1}>s)\ud z\ud s \\ &\leq \int_0^1\P_x(\tau_{D_1}>s)\ud s \cdot \sup_{u \in D_1, z \in D_3} \nu(z-u) \\ &\leq \E_x \tau_{D_1} \cdot \sup_{u \in D_1, z \in D_3} \nu(z-u),
	\end{align*}
 	and the upper estimate follows in this case. For general $D_1$, we pick an approximating sequence $(E_n)_{n \in \N}$ such that $E_n \subseteq E_{n+1}$, $\cup_{n \in \N}E_n = D_1$ and $\P_x(X_{\tau_{E_n}} \in \partial E_n \cap D)=0$, and use the continuity of the heat kernel $p$. Note that the continuity of the L\'{e}vy density $\nu$ is not necessary.
 	
 	For the proof of the second part, we use the reverse estimate to \eqref{eq:4} together with \eqref{eq:dual_dhk} to obtain
 	\begin{align*}
 		\mathrm{I}_2 &\geq \inf_{u \in D_1, z \in D_3} \nu(z-u) \int_0^1 \int_{D_3} p_{1-s}^D(z,y) \P_x(\tau_{D_1}>s) \ud z\ud s \\ &\geq \P_x(\tau_{D_1}>1) \inf_{u \in D_1, z \in D_3} \nu(z-u) \int_0^1 \int_{D_3} \widehat{p}_{1-s}^{D_3}(y,z) \ud z\ud s \\ &\geq  \P_x(\tau_{D_1}>1) \hP_y(\tau_{D_3}>1) \inf_{u \in D_1, z \in D_3} \nu(z-u).
 	\end{align*}
\end{proof}
\begin{lemma}\label{lem:dhk_upper}
	If $D$ is ($\kappa$,1)-fat at $x$ and $y$, then there is a constant $c=c(\alpha,d, \lambda,\kappa)$ such that
	\[
	p_2^D(x,y) \leq c \P_x(\tau_D > 2) p_2(x,y) \hP_y(\tau_D > 2).
	\]
\end{lemma}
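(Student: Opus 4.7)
My approach is to use Chapman--Kolmogorov at the midpoint to split the path, extract the factor $\P_x(\tau_D > 2)$ from the first half via Lemma \ref{lem:dhk_partial_est} together with the survival-probability equivalences of Remark \ref{rem:2}, and extract the factor $\hP_y(\tau_D > 2)$ from the second half by the symmetric argument applied to the dual process $\hbfX$. The free middle factor $p_2(x,y)$ then emerges automatically from the free Chapman--Kolmogorov identity.

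\textbf{Key steps.} I would first write
\begin{equation*}
p_2^D(x,y) \;=\; \int_D p_1^D(x,z)\, p_1^D(z,y)\, \od z,
\end{equation*}
and aim to establish, for every $z \in D$, the two pointwise bounds
\begin{equation*}
p_1^D(x,z) \,\leq\, c\, \P_x(\tau_D > 2)\, p_1(x,z) \quad \text{and} \quad p_1^D(z,y) \,\leq\, c\, \hP_y(\tau_D > 2)\, p_1(z,y).
\end{equation*}
For the first, apply Lemma \ref{lem:dhk_partial_est} with $D_1 = U^{x,1}$ and $D_3$ a small ball around $z$ at positive distance from $D_1$, and $D_2 = D \setminus (D_1 \cup D_3)$. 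By Remark \ref{rem:2}, both probabilistic prefactors $\P_x(X_{\tau_{D_1}} \in D_2)$ and $\E_x \tau_{D_1}$ are comparable to $\P_x(\tau_D > 2)$; the two suprema $\sup_{s < 1,\, w \in D_2} p_s(w,z)$ and $\sup_{u \in D_1,\, w \in D_3} \nu(w - u)$ are bounded by a constant multiple of $p_1(x,z)$ through \eqref{eq:hk_approx}, \eqref{eq:nu_approx} and \eqref{eq:10}, once $D_3$ is taken of bounded diameter. The second inequality follows by the fully dual argument: \eqref{eq:dual_dhk} rewrites $p_1^D(z,y) = \widehat{p}_1^D(y,z)$, the hypothesis of $(\kappa,1)$-fatness at $y$ furnishes $U^{y,1}$, and all ingredients of Section \ref{sec:prelims} used above (\eqref{eq:HI}, \eqref{eq:BHP}, Remark \ref{rem:2}, Lemma \ref{lem:dhk_partial_est}) hold verbatim for $\hbfX$ because the assumption \eqref{eq:main_as} is symmetric in $\bfX$ and $\hbfX$. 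Substituting both bounds under the integral and invoking the free convolution identity $\int_{\Rd} p_1(x,z)\, p_1(z,y)\, \od z = p_2(x,y)$ gives the claim.

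\textbf{Main obstacle.} The delicate point is the near-diagonal regime where $z$ is too close to $x$ (or to $y$) for the separation hypothesis $\dist(D_1, D_3) > 0$ of Lemma \ref{lem:dhk_partial_est} to hold with the fixed choice $D_1 = U^{x,1}$, so the pointwise bound $(\ast)$ cannot be applied as stated. The remedy is to rescale the fat witness: replace $U^{x,1}$ by $U^{x,r}$ with $r$ of order $|x - z| \wedge 1$, which restores the separation between $D_1$ and $D_3 \ni z$. The scale-invariant nature of Remark \ref{rem:2}(2) still identifies $\E_x \tau_{U^{x,r}}$ and $\P_x(X_{\tau_{U^{x,r}}} \in D_2)$ with $\P_x(\tau_D > 2)$ up to constants depending only on $\kappa$, and the corresponding rescaled suprema of $p_s$ and $\nu$ remain under control by the sharp bounds \eqref{eq:hk_approx}--\eqref{eq:10} together with the scaling identity \eqref{eq:dhk_scaling}. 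A symmetric adjustment on the $y$-side, combined with careful bookkeeping of the $(\kappa,1)$-fatness across scales, finishes the argument.
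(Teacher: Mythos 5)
Your overall strategy coincides with the paper's: split $p_2^D$ at time $1$ via Chapman--Kolmogorov, establish the two pointwise bounds $p_1^D(x,z) \leq c\,\P_x(\tau_D>2)\,p_1(x,z)$ and $\widehat{p}_1^D(y,z) \leq c\,\hP_y(\tau_D>2)\,p_1(z,y)$, and conclude by the free convolution identity. Your treatment of the well-separated case via Lemma \ref{lem:dhk_partial_est} and Remark \ref{rem:2} is also essentially the paper's, modulo a harmless choice of $D_3$ (the paper uses the annular set $\{z \in D: |z-x|> |x-v|/2\}$, you a small ball around $z$).

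However, there is a genuine gap in your handling of the near-diagonal regime, and the proposed remedy does not work as stated. First, the hypothesis only gives $(\kappa,1)$-fatness at $x$; this does not furnish fat witnesses at smaller scales, so $U^{x,r}$ for $r<1$ is not defined under the lemma's assumptions, and $A_r(x)$ need not exist. Second, even granting fatness at scale $r$, Remark \ref{rem:2} is not scale-invariant in the sense you claim: it identifies $\E_x \tau_{U^{x,r}}$ with survival probabilities at the matching timescale $\approx r^\alpha$, not with $\P_x(\tau_D>2)$. Indeed Remark \ref{rem:2}(2) allows changing the time constant $a$, not the spatial scale $r$. Third, even if both points were resolved, the product $\E_x \tau_{D_1}\,\sup_{u \in D_1, w\in D_3}\nu(w-u)$, with $D_1,D_3$ balls of radius $\approx |x-z|$, scales like $|x-z|^{\alpha}\cdot|x-z|^{-d-\alpha} = |x-z|^{-d}$, which blows up as $z \to x$, while the target $\P_x(\tau_D>2)\,p_1(x,z)$ stays bounded. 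So the pointwise estimate cannot be obtained this way near the diagonal. The paper circumvents this entirely by a much simpler argument in the near-diagonal regime $|x-z|\leq 8$: there $p_1(x,z)\approx 1$ by \eqref{eq:10}, and the semigroup property gives $p_1^D(x,z) = \int_D p_{1/2}^D(x,w)p_{1/2}^D(w,z)\,\od w \leq \sup_w p_{1/2}(w,z)\,\P_x(\tau_D>1/2)$, which by \eqref{eq:hk_approx} and Lemma \ref{lem:surv_prop_est} is $\leq c\,\P_x(\tau_D>1)\approx c\,p_1(x,z)\,\P_x(\tau_D>1)$. Lemma \ref{lem:dhk_partial_est} is then reserved only for the case $|x-z|>8$, where $D_1=U^{x,1}$ and $D_3 = \{z\in D: |z-x|>|x-z|/2\}$ (fixed at scale $1$) are separated and the suprema are controlled by $p_1(x,z)$ as needed. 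You should replace your rescaling argument with this semigroup argument for the near-diagonal case.
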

\begin{proof}
	We first claim that if $D$ is $(\kappa,1)$-fat at $u$, then there is $c = c(\alpha,d,\lambda,\kappa)$ such that 
	\begin{align}
			p_1^D(u,v)  \leq cp_1(u,v) \P_u(\tau_D>1), \quad v \in D, \label{eq:5} \\ \intertext{and} \widehat{p}_1^D(u,v) \leq cp_1(v,u)\hP_u (\tau_D>1), \quad v \in D. \label{eq:6}
	\end{align}
	Indeed, for $|u-v| \leq 8$ we have $p_1(u,v) \approx p_1(v,u) \approx 1$ (see \eqref{eq:10}); hence, by the semigroup property, \eqref{eq:hk_approx} and Lemma \ref{lem:surv_prop_est},
	\begin{align*}
	p_1^D(u,v) &= \int_D p_{1/2}^D(u,z)p_{1/2}^D(z,v)\ud z \\ &\leq \sup_{z \in \Rd} p_{1/2}(z,v) \P_u(\tau_D>1/2) \\ &\leq c \P_u(\tau_D>1) \\ &\leq cp_1(u,v) \P_u(\tau_D>1),
	\end{align*}
	and \eqref{eq:5} follows in this case. Therefore, we assume that $|u-v|>8$ and use Lemma \ref{lem:dhk_partial_est} with $A=A_1(u)$, $D_1 = U^{u,1} = D \cap B(A,|u-A|+\kappa/3)$ and $D_3 = \Big\{z \in D \colon |z-u|>\frac12|u-v|\Big\}$. Observe that by \eqref{eq:nu_approx}, \eqref{eq:hk_approx} and \eqref{eq:11},
	\[
	\sup_{s <1 , z \in D_2} p_s(z,v) \leq cp_1(u,v)
	\]
	and
	\[
	\sup_{w \in D_1, z \in D_3} \nu(z-w) \leq cp_1(u,v).
	\]
	Hence, by Lemma \ref{lem:dhk_partial_est},
	\begin{align*}
	p_1^D(u,v) &\leq cp_1(u,v) \big( \P_u(X_{\tau_{D_1}} \in D_2) + \E_u \tau_{D_1} \big) \\ &\leq cp_1(u,v) \big( \P_u(X_{\tau_{D_1}} \in D) + \E_u \tau_{D_1} \big) \\ &\leq cp_1(u,v) \P_u(\tau_D>1),
	\end{align*}
	where the last inequality is a consequence of Remark \ref{rem:2}. For the proof of \eqref{eq:6} it remains to consider the dual process $\hbfX$, use \eqref{eq:10} and proceed in exactly the same way. Thus, by the semigroup property, \eqref{eq:dual_dhk}, \eqref{eq:5} and \eqref{eq:6}, and Lemma \ref{lem:surv_prop_est} with \eqref{eq:10},
	\begin{align*}
		p_2^D(x,y) &= \int_D p^D_1(x,z)p_1^D(z,y)\ud z \\ &= \int_D p^D_1(x,z)\widehat{p}_1^D(y,z)\ud z \\ &\leq c\P_x(\tau_D>1) \hP_y(\tau_D>1) \int_Dp_1(x,z)p_1(z,y)\ud z \\ &\leq c \P_x(\tau_D>2) \hP_y(\tau_D>2)p_2(x,y).
	\end{align*}
\end{proof}
\begin{remark}\label{rem:1}
	In fact, under the assumptions of Lemma \ref{lem:dhk_upper} we get that
	\begin{equation}\label{eq:7}
		p_1^D(x,y) \leq c\P_x(\tau_D>1) \hP_y(\tau_D > 1)p_1(x,y).
	\end{equation}
	For the proof let us consider the modified L\'{e}vy measure $\widetilde{\nu} = \frac12 \nu$, the corresponding kernels $\widetilde{p}$ and $\widetilde{p}^D$, and the probability distribution $\widetilde{\P}$. Then by elementary calculations one gets that $\widetilde{p}_t^D(x,y) = p_{t/2}^D(x,y)$ for all $x,y \in D$ and all $t>0$. It follows from Lemma \ref{lem:dhk_upper} that
	\begin{align*}
		p_1^D(x,y) &= \widetilde{p}_{2}^D(x,y) \\ &\leq \widetilde{c}\widetilde{\P}_x(\tau_D>2)\widetilde{\hP}_y(\tau_D>2)\widetilde{p}_2(x,y) \\ &= \widetilde{c}\P_x(\tau_D>1)\hP_y(\tau_D>1)p_1(x,y).
	\end{align*}
\end{remark}

Now we deal with the lower estimate of the Dirichlet heat kernel.
\begin{lemma}\label{lem:lower_est_balls}
	Let $r>0$. There is $c=c(\alpha,d,\lambda,r)$ such that
	\[
	p_1^{B(u,r) \cup B(v,r)}(u,v) \geq cp_1(u,v), \quad u,v \in \Rd.
	\]
\end{lemma}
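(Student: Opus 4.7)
The plan is to split into two regimes according to the relative size of $|u-v|$ and $r$: when $|u-v|$ is comparable to or larger than $r$, a single application of Lemma \ref{lem:dhk_partial_est} will suffice, whereas the small-$|u-v|$ regime calls for a detour through an auxiliary point inserted via Chapman-Kolmogorov. Throughout I will use the obvious time-$t$ analogue of Lemma \ref{lem:dhk_partial_est}: repeating its proof with $\int_0^1\od s$ replaced by $\int_0^t\od s$ yields
\[
p_t^D(x,y)\geq t\,\P_x(\tau_{D_1}>t)\,\hP_y(\tau_{D_3}>t)\inf_{w\in D_1,z\in D_3}\nu(z-w)
\]
for any disjoint $D_1,D_3\subset D$ with $x\in D_1$, $y\in D_3$.

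In the regime $|u-v|\geq 3r/4$, set $D_1=B(u,r/8)$, $D_3=B(v,r/8)$. These are disjoint subsets of $D=B(u,r)\cup B(v,r)$. By translation invariance and scaling \eqref{eq:surv_scaling}, $\P_u(\tau_{D_1}>1)=\P_0(\tau_{B_{r/8}}>1)=:c_1(r,\alpha,d,\lambda)>0$, and, since $\hbfX$ satisfies {\bf A} with density $\lambda(-\cdot)$, also $\hP_v(\tau_{D_3}>1)=c_2(r,\alpha,d,\lambda)>0$. For $w\in D_1$ and $z\in D_3$ one has $|z-w|\leq|u-v|+r/4\leq(4/3)|u-v|$, so \eqref{eq:nu_approx} gives $\nu(z-w)\geq\theta(4/3)^{-d-\alpha}|u-v|^{-d-\alpha}\geq\theta(4/3)^{-d-\alpha}(1+|u-v|)^{-d-\alpha}\gtrsim p_1(u,v)$ via \eqref{eq:11}. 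Lemma \ref{lem:dhk_partial_est} then delivers $p_1^D(u,v)\gtrsim_r p_1(u,v)$.

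In the regime $|u-v|<3r/4$, \eqref{eq:11} shows $p_1(u,v)\approx 1$ with constants depending on $r$, so it is enough to prove $p_1^D(u,v)\geq c(r)>0$. Since $D\supseteq B(u,r)$, $p_1^D(u,v)\geq p_1^{B(u,r)}(u,v)$. Fix a unit vector $e:=(u-v)/|u-v|$ (or any unit vector if $u=v$), put $m:=u+(3r/4)e$, and note that $B(m,r/16)\subset B(u,r)$, $|m-u|=3r/4$ and $|m-v|\geq 3r/4$. By Chapman-Kolmogorov \eqref{eq:dhk_CH-K},
\[
p_1^{B(u,r)}(u,v)\geq\int_{B(m,r/16)}p_{1/2}^{B(u,r)}(u,z)\,p_{1/2}^{B(u,r)}(z,v)\,\od z.
\]
For each $z\in B(m,r/16)$, both $|u-z|$ and $|z-v|$ lie in an interval $[c_1'r,c_2'r]$; I would apply the time-$\tfrac12$ analogue of Lemma \ref{lem:dhk_partial_est} to each factor with $D_1,D_3$ chosen as disjoint balls of radius $r/32$ around the two endpoints. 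The disjointness and inclusion in $B(u,r)$ are immediate from the above distance bounds, and exactly as in the first case one gets a uniform lower bound $p_{1/2}^{B(u,r)}(u,z)\,p_{1/2}^{B(u,r)}(z,v)\geq c(r)>0$. Integrating over $B(m,r/16)$ finishes this case.

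The main difficulty is the small-$|u-v|$ regime, because Lemma \ref{lem:dhk_partial_est} cannot be applied directly: with $D_1,D_3$ chosen as balls around $u$ and $v$ they would not be disjoint. The detour point $m$ sits on the far side of $u$ from $v$ within $B(u,r)$, which guarantees $|m-u|,|m-v|\gtrsim r$ uniformly even in the degenerate case $u=v$, and makes the two-step Chapman-Kolmogorov argument succeed. The non-symmetry of $\bfX$ causes no additional trouble: assumption {\bf A} is invariant under $\bfX\mapsto\hbfX$, so all quantities pertaining to the dual process admit the same estimates as those for $\bfX$.
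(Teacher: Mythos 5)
Your proof is correct. In the regime $|u-v|$ large compared to $r$ you take essentially the same route as the paper (Lemma~\ref{lem:dhk_partial_est} with disjoint balls $B(u,r/8)$, $B(v,r/8)$ together with \eqref{eq:nu_approx} and \eqref{eq:11}; the paper uses the threshold $r/2$ and bounds $\nu$ against $p_1$ via \eqref{eq:10}, which is a cosmetic difference). In the regime $|u-v|$ small you diverge genuinely: the paper simply observes $p_1^{B(u,r)\cup B(v,r)}(u,v)\geq p_1^{B_r}(0,v-u)\geq\inf_{|z|\leq r/2}p_1^{B_r}(0,z)>0$, invoking continuity and strict positivity of $p_1^{B_r}$ from \cite[Theorem~3.2(4)]{Vondracek02}. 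You instead insert an intermediate ball $B(m,r/16)$ on the far side of $u$ from $v$ and run Chapman--Kolmogorov, applying a time-$\tfrac12$ variant of Lemma~\ref{lem:dhk_partial_edit} twice; the geometry you chose (radii $r/16$, $r/32$) does ensure disjointness of $D_1,D_3$ and inclusion in $B(u,r)$, and the extra $t$ factor you assert in the time-$t$ version of the lower bound does follow from replacing $\int_0^1$ by $\int_0^t$ in the paper's argument. The net effect is that your proof is more self-contained (it avoids citing the positivity result for $p_1^{B_r}$), at the cost of a slightly longer geometric verification; the paper's version is shorter but leans on \cite{Vondracek02}.

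One small typographical slip above: I wrote \verb|Lemma~\ref{lem:dhk_partial_edit}| where of course Lemma~\ref{lem:dhk_partial_est} is meant.
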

\begin{proof}
	If $|u-v|>r/2$, then we apply Lemma \ref{lem:dhk_partial_est} with $D = B(u,r) \cup B(v,r)$, $D_1 = B(u,r/8)$ and $D_3 =B(v,r/8)$, so that
	\begin{align*}
	p_1^{B(u,r)\cup B(v,r)}(u,v) &\geq \P_u(\tau_{D_1}>1)\hP_y(\tau_{D_3}>1) \inf_{w \in D_1, z \in D_3} \nu(z-w) \\ &\geq \P_0(\tau_{B_{r/8}}>1)\hP_0(\tau_{B_{r/8}}>1)p_1(u,v) \\ &= c p_1(u,v),
	\end{align*}
	where the second inequality follows from \eqref{eq:10}.	The case $|u-v| \leq r/2$ is even simpler; we then have by continuity and strict positivity of $p_1^{B_r}$ (see \cite[Theorem 3.2(4)]{Vondracek02}) together with \eqref{eq:11} that
	\[
	p_1^{B(u,r)\cup B(v,r)}(u,v) \geq \inf_{|z|<r/2}p_1^{B_r}(0,z) \geq c \geq cp_1(u,v).
	\]
\end{proof}
\begin{lemma}\label{lem:dhk_lower}
	If $D$ is $(\kappa,1)$-fat at $x$ and $y$, then there is $c = c(\alpha,d,\lambda,\kappa)$ such that
	\[
	p_3^D(x,y) \geq c \P_x(\tau_D>3)p_3(x,y)\hP_y(\tau_D>3).
	\]
\end{lemma}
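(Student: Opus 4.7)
The plan is to exploit the Chapman--Kolmogorov identity to split the time interval $[0,3]$ into three pieces of length one,
\[
p_3^D(x,y) = \int_D\!\int_D p_1^D(x,u)\, p_1^D(u,v)\, p_1^D(v,y)\ud u\ud v,
\]
and to restrict the $u$- and $v$-integrations to small balls lying deep inside $D$. Concretely, I will take $V_x := B(A_1'(x), \kappa/12)$, where $A_1'(x)$ is the auxiliary point from the $\kappa$-fatness construction satisfying $B(A_1'(x),\kappa/3) \subset B(A_1(x),\kappa) \setminus U^{x,1} \subseteq D$; by design, $B(u,\kappa/12) \subseteq D$ for every $u \in V_x$. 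The set $V_y$ is defined analogously at $y$. Under these choices, $|u-x|$ and $|v-y|$ are bounded by a constant depending only on $\kappa$, and $V_x, V_y$ have fixed positive volume.

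For the middle factor, Lemma~\ref{lem:lower_est_balls} applied to $B(u,\kappa/12) \cup B(v,\kappa/12) \subseteq D$ gives $p_1^D(u,v) \geq c\,p_1(u,v)$, while \eqref{eq:11} and \eqref{eq:12} combined with $|u-x|, |v-y| \lesssim 1$ yield $p_1(u,v) \approx p_3(x,y)$ uniformly over $u \in V_x$ and $v \in V_y$. The whole proof then reduces to showing
\[
\int_{V_x} p_1^D(x,u)\ud u \gtrsim \P_x(\tau_D > 3),
\]
together with its dual analogue $\int_{V_y} p_1^D(v,y)\ud v \gtrsim \hP_y(\tau_D > 3)$; the latter follows from the former applied to $\hbfX$ via \eqref{eq:dual_dhk}, since our standing assumptions, Remark~\ref{rem:2} and Lemma~\ref{lem:lower_est_balls} all hold symmetrically for the dual process.

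To establish the displayed estimate I will introduce $U := U^{x,1}$ and $V_x' := B(A_1'(x), \kappa/24) \subset V_x$, and consider the event that $\bfX$ exits $U$ by a single jump landing in $V_x'$ at some time $\tau_U \in (0,1/2)$ and then stays in $B(X_{\tau_U},\kappa/24) \subseteq V_x$ until time~$1$. On this event one has $\tau_D > 1$ and $X_1 \in V_x$, so the integral in question is bounded below by the probability of this event. The Ikeda--Watanabe formula~\eqref{eq:IW_density} combined with the strong Markov property turns that probability into
\[
\int_0^{1/2}\!\int_{V_x'}\!\Big(\int_U p_s^U(x,w)\,\nu(z-w)\ud w\Big)\P_z\big(\tau_{B(z,\kappa/24)} > 1-s\big)\ud z\ud s.
\]
Since $\dist(U, V_x') \geq \kappa/6$ while $U \cup V_x'$ lies in a ball of bounded radius, \eqref{eq:nu_approx} gives $\nu(z-w) \approx 1$ on the relevant range; translation invariance bounds the inner survival probability below by $\P_0(\tau_{B_{\kappa/24}} > 1) > 0$; and Fubini collapses the remainder to a constant multiple of $\E_x[\tau_U \wedge 1/2]$.

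The hardest step, I expect, is the final comparison $\E_x[\tau_U \wedge 1/2] \gtrsim \E_x \tau_U$, after which Remark~\ref{rem:2} identifies the right-hand side with $\P_x(\tau_D > 3)$ and closes the argument. My plan here is to decompose $\E_x \tau_U = \E_x[\tau_U \wedge 1/2] + \E_x[(\tau_U - 1/2)^+]$ and control the remainder via the strong Markov property at time $1/2$, yielding $\E_x[(\tau_U - 1/2)^+] \leq \P_x(\tau_U > 1/2) \cdot \sup_{z \in U} \E_z \tau_U$; the supremum is finite and depends only on $\kappa$ since $U$ is contained in a ball of fixed radius, and Markov's inequality $\P_x(\tau_U > 1/2) \leq 2\,\E_x[\tau_U \wedge 1/2]$ then closes the loop. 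Assembling the three factor bounds produces the desired lower estimate on $p_3^D(x,y)$.
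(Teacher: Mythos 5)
Your argument is correct and follows the same overall strategy as the paper: split $p_3^D$ via Chapman--Kolmogorov into three unit-time factors, restrict the intermediate variables to small balls $V_x$, $V_y$ around the auxiliary points $A_1'(x)$, $A_1'(y)$, bound the middle factor via Lemma~\ref{lem:lower_est_balls}, use \eqref{eq:11} and \eqref{eq:12} to replace $p_1(u,v)$ by $p_3(x,y)$, and control the outer integrals by survival probabilities. The one place you diverge is in how the outer factors are handled: the paper applies the lower estimate of Lemma~\ref{lem:dhk_partial_est} with $D_1=U^{x,1}$ and $D_3=B(A_1'(x),\kappa/4)$ to obtain the pointwise bound $p_1^D(x,u)\geq c\,\P_x(\tau_D>1)$ for $u$ in the inner ball, then integrates and uses Lemma~\ref{lem:surv_prop_est}; you instead bound $\int_{V_x}p_1^D(x,u)\ud u$ directly by the probability of a concrete jump-then-stay event, using the Ikeda--Watanabe formula and the strong Markov property. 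This essentially re-derives Lemma~\ref{lem:dhk_partial_est}'s lower bound from scratch in the specific geometry; it is valid, just longer. Your bootstrap for $\E_x[\tau_U\wedge 1/2]\gtrsim\E_x\tau_U$ (strong Markov at $1/2$, then Markov's inequality applied to $\tau_U\wedge 1/2$) is also correct, but it is an avoidable detour: from $\E_x[\tau_U\wedge 1/2]\geq\tfrac12\P_x(\tau_U>1/2)\geq\tfrac12\P_x(\tau_U>1)$ and the chain in \eqref{eq:23} of Remark~\ref{rem:2}, which already records $\P_x(\tau_U>1)\approx\E_x\tau_U\approx\P_x(\tau_D>3)$, you reach the conclusion in one line.
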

\begin{proof}
	By the semigroup property, Lemma \ref{lem:lower_est_balls} with $r=\kappa/6$ and \eqref{eq:11}, we have
	\begin{align*}
		p_3^D(x,y) &\geq \int_{B_2^{x,1}}\int_{B_2^{y,1}}p_1^D(x,u)p_1^D(u,v)p_1^D(v,y)\ud u\ud v \\ &\geq cp_1(x,y) \int_{B_2^{x,1}}p_1^D(x,u)\ud u \int_{B_2^{y,1}}\widehat{p}_1^D(y,v)\ud v.
	\end{align*}
	For $u \in B_2^{x,1} = B(A'_1(x),\kappa/6)$, using Lemma \ref{lem:dhk_partial_est} with $D_1 = U^{x,1}$ and $D_3 = B(A'_1(x),\kappa/4)$, \eqref{eq:nu_approx} and Remark \ref{rem:2}, we get
	\begin{align*}
		p_1^D(x,u) &\geq \P_x(\tau_{D_1}>1) \hP_u(\tau_{D_3}>1) \inf_{w \in D_1, z \in D_3} \nu(z-w) \\ &\geq c\P_x(\tau_{D_1}>1) \hP_0(\tau_{B_{\kappa/12}}>1) \\ &\geq c\P_x(\tau_D>1).
	\end{align*}
	In the same way we obtain, for $v \in B_2^{y,1}$,
	\[
	\widehat{p}_1^D(y,v) \geq c \hP_y(\tau_D>1).
	\]
	Therefore, by Lemma \ref{lem:surv_prop_est} and \eqref{eq:12},
	\begin{align*}
	p_3^D(x,y) &\geq c \P_x(\tau_D>1) p_1(x,y) \hP_y(\tau_D>1) \\ &\geq c \P_x(\tau_D>3) p_3(x,y) \hP_y(\tau_D>3).
	\end{align*}
\end{proof}
\begin{remark}
	Proceeding exactly as in Remark \ref{rem:1} we conclude that under the assumptions of Lemma \ref{lem:dhk_lower} it holds that  
	\begin{equation}\label{eq:8}
		p_1^D(x,y) \geq c \P_x(\tau_D>1)\hP_y(\tau_D>1)p_1(x,y).
	\end{equation}	
\end{remark}

\begin{proof}[Proof of Theorem \ref{thm:dhk}]
	Suppose first that $R \in (0,\infty)$ and $c = 1$. Then it follows that $t^{-1/\alpha}D$ is $(\kappa,1)$-fat and the result follows from \eqref{eq:7}, \eqref{eq:8} and scaling properties \eqref{eq:hk_scaling}, \eqref{eq:dhk_scaling} and \eqref{eq:surv_scaling} with $C = C(\alpha,d,\lambda,\kappa)$. For $c>1$ we use the fact that if $D$ is $(\kappa,R)$-fat, then it is also $(\kappa/a,aR)$-fat for any $a \in [1,\infty)$. Thus, setting $a=c^{1/\alpha}$ we arrive at the previous setting at the cost of worsening the constant. In this case $C = C(\alpha,d,\lambda,\kappa,c)$.
	
	For the case $R=\infty$ we note that by the convention adopted in Section \ref{sec:prelims}, $D$ is $(\kappa,r)$-fat for all $r>0$ with $\kappa$ independent of $r$. Thus, for any $t>0$ one can pick $r>0$ such that $r^\alpha \geq t$ and apply the reasoning above to conclude the claim. This time $c=1$ and $C=C(\alpha,d,\lambda,\kappa)$.
\end{proof}
\section{Stable processes in generalised $\kappa$-fat cones}\label{sec:Y}
Now we specify our analysis to open sets $\Gamma\subseteq \Rd$ which are invariant under rescaling, i.e. for every $r>0$, $ry \in \Gamma$ whenever $y \in \Gamma$. In other words, from this moment on, $\Gamma$ is a generalised cone in $\Rd$. Thus, if $0 \in \Gamma$, then necessarily $\Gamma=\Rd$. Otherwise, $\Gamma$ can be characterised by its intersection with the unit sphere $\Sd$. For example, for $d=1$ we have four possibilities: $\Gamma=\Rd$, $\Gamma=\R \setminus \{0\}$, $\Gamma = (0,\infty)$ and $\Gamma = (-\infty,0)$. We note that by the scaling invariance, if $\Gamma$ is $(\kappa,r)$-fat for some $r>0$, then it is in fact $(\kappa,r)$-fat for every $r>0$ with the constant $\kappa$ independent of $r$. 

In what follows, it will be convenient to fix a reference point $\one=(0,\ldots,0,1) \in \Rd$. Note that if $\one \notin \Gamma$, then one can consider an appropriate rotation matrix $U \in \calM_d$ such that $\one \in \{Ux: x \in \Gamma\}$. It is then easy to see that $U\bfX:=(UX_t\colon t \geq 0)$ is also a strictly stable L\'{e}vy process with the spherical measure $\widetilde{\zeta}$ given by $\widetilde{\zeta}(B) = \zeta(\{x \colon Ux \in B\})$ and the drift component $\widetilde{\gamma}:=U\gamma + \int Ux(\ind_{B_1}(Ux) - \ind_{B_1}(x))\,\nu(\od x)$, c.f. \cite[Proposition 11.10]{Sato99}. Clearly, if $\bfX$ satisfies \eqref{eq:main_as}, then so does $U\bfX$ (with the same constant $\theta$). Thus, at the possible cost of \emph{rotating} the cone $\Gamma$ and the underlying process $\bfX$, we may and do assume that $\one \in \Gamma$.
\subsection{Martin kernel for the cone and its properties}

First, we collect and apply results from general theory \cite{KSV17, KSV18} to establish the existence and basic properties of the Martin kernel for arbitrary $\kappa$-fat cones $\Gamma$. Given $R>0$, we define $\Gamma_R:= \Gamma \cap B_R$. The following is a version of \cite[Theorem 3.2]{BB04} for anisotropic $\alpha$-stable processes.
\begin{lemma}\label{lem:M}
	Assume $\Gamma$ is a $\kappa$-fat cone in $\Rd$ and suppose $\bfX$ is a strictly stable L\'{e}vy process satisfying {\bf A}. There is a unique non-negative function on $\Rd$ such that $M_{\Gamma}(\one)=1$, $M_{\Gamma}(x)=0$ for $x \in \Gamma^c$ and $M_{\Gamma}$ is regular harmonic with respect to $\bfX$ in every open bounded subset $B \subseteq \Gamma$, i.e.
	\begin{equation*}
		M_{\Gamma}(x) = \E_x M_{\Gamma} \big( X_{\tau_B} \big), \quad x \in \Gamma.
	\end{equation*}
	Moreover, $M_{\Gamma}$ is locally bounded and homogeneous of order $\beta = \beta(\alpha,\lambda,\Gamma) \in [0,\alpha)$, i.e.
	\begin{equation}\label{eq:Mk_hom}
		M_{\Gamma}(x) = |x|^{\beta}M_{\Gamma}(x/|x|), \quad x \in \Gamma.
	\end{equation}
	If $\Gamma$ is Greenian, then one also has
	\begin{equation*}
	M_{\Gamma}(x) = \lim_{\Gamma \ni y,|y| \to \infty} \frac{G_{\Gamma}(x,y)}{G_{\Gamma}(\one,y)}.
	\end{equation*}

\end{lemma}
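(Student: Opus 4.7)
The plan is to follow the roadmap of Ba\~{n}uelos--Bogdan \cite{BB04}, replacing symmetry-dependent tools by the non-symmetric Martin-boundary construction of \cite{KSV17, KSV18}, the scale-invariant global boundary Harnack principle \eqref{eq:BHP} from \cite{BKK15}, and the Harnack inequality \eqref{eq:HI}. Both \eqref{eq:HI} and \eqref{eq:BHP} hold for $\bfX$ and $\hbfX$ with identical constants, so the classical compactness and normalisation arguments transfer verbatim.

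For existence in the Greenian case, I would fix a sequence $y_n \in \Gamma$ with $|y_n| \to \infty$ and $\delta_\Gamma(y_n)/|y_n|$ bounded below, and set $M_n(x) := G_\Gamma(x, y_n)/G_\Gamma(\one, y_n)$. Each $M_n$ vanishes on $\Gamma^c$, equals $1$ at $\one$, and is regular harmonic on every bounded open $B \subseteq \Gamma$ avoiding $y_n$. By \eqref{eq:BHP} the family is uniformly bounded on compact subsets of $\Gamma$; by \eqref{eq:HI} it is equicontinuous; Arzel\`a--Ascoli then extracts a subsequential locally uniform limit $M_\Gamma$, which inherits regular harmonicity by dominated convergence in the harmonic-measure identity. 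Uniqueness (and hence convergence along the full net, which yields the Green-function representation) follows from \eqref{eq:BHP}: any two such limits have a uniformly bounded ratio on compacts, both equal $1$ at $\one$, and a dyadic bootstrap along shells in $\Gamma$ forces equality. The exceptional non-Greenian case ($d = \alpha = 1$) I would handle by exhausting $\Gamma$ by relatively compact Greenian pieces $\Gamma \cap (B_R \setminus \overline{B}_{1/R})$ and invoking \cite{KSV18} directly.

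Next I would establish homogeneity. Since $r\Gamma = \Gamma$ for every $r > 0$, the scaling identity \eqref{eq:GD_scaling} yields
\[
\frac{G_\Gamma(rx, ry_n)}{G_\Gamma(r\one, ry_n)} = \frac{G_\Gamma(x, y_n)}{G_\Gamma(\one, y_n)}.
\]
Letting $n \to \infty$ (noting $|ry_n| \to \infty$ as well) and invoking uniqueness of $M_\Gamma$ gives $M_\Gamma(rx)/M_\Gamma(r\one) = M_\Gamma(x)$. Thus $\varphi(r) := M_\Gamma(r\one)$ satisfies the functional equation $\varphi(rs) = \varphi(r)\varphi(s)$, and local boundedness of $M_\Gamma$ (from \eqref{eq:HI} on fixed-shape spherical shells in $\Gamma$) makes $\varphi$ locally bounded, hence $\varphi(r) = r^\beta$ for some $\beta \in \R$. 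This is \eqref{eq:Mk_hom}.

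The main obstacle will be to pin down $\beta \in [0,\alpha)$. The upper bound $\beta < \alpha$ is the decisive step and is where Lemma \ref{lem:harmonic_growth_bound} enters: applied to $M_\Gamma$ at the vertex $x_0 = 0 \in \partial \Gamma$ (the case $\Gamma = \Rd$ being trivial with $M_\Gamma \equiv 1$ and $\beta = 0$), together with $|A_s(0)| \approx s$, it yields $M_\Gamma(A_s(0)) \geq c\, s^\gamma M_\Gamma(A_1(0))$ for $s \in (0,1)$ and some $\gamma < \alpha$ independent of $s$. On the other hand, by \eqref{eq:Mk_hom} and \eqref{eq:HI} on $\Sd \cap \Gamma$, one has $M_\Gamma(A_s(0)) \approx s^\beta$ and $M_\Gamma(A_1(0)) \approx 1$. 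Comparing forces $s^\beta \gtrsim s^\gamma$ for all $s \in (0,1)$, whence $\beta \leq \gamma < \alpha$. The lower bound $\beta \geq 0$ reflects the fact that $M_\Gamma$ is a Martin kernel with pole at infinity: a Harnack-chain argument along the dyadic ray $\{2^k \one\}_{k \geq 0}$, combined with dilation-invariance of the relevant annuli in $\Gamma$, produces a uniform constant $c > 0$ such that $\varphi(2^k) \geq c$ for all $k \geq 0$, ruling out $\beta < 0$.
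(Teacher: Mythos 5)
Your proposal follows the Ba\~nuelos--Bogdan roadmap and gets the skeleton right, but there is a genuine gap in the argument for $\beta \geq 0$, which is one of the two places the paper has to do real work.

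The existence, homogeneity and $\beta < \alpha$ parts are fine. The paper imports existence and regular harmonicity directly from \cite{KSV17,KSV18} (accessibility of infinity plus Theorem 1.3(b) there) instead of re-running the Arzel\`a--Ascoli compactness argument, but your compactness route is a valid parallel path and your use of Lemma \ref{lem:harmonic_growth_bound} for $\beta \leq \gamma < \alpha$ matches the paper exactly.

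The problem is the lower bound $\beta \geq 0$. You propose a Harnack-chain argument along $\{2^k\one\}$ together with dilation-invariance to get $\varphi(2^k) \geq c > 0$ uniformly in $k$. This does not work: a Harnack chain from $\one$ to $2^k\one$ has length of order $k$, so the constant you obtain degrades like $c^k$, which is perfectly consistent with $\beta < 0$ (i.e.\ $\varphi(2^k) = 2^{k\beta} \to 0$). Dilation-invariance makes each single step have the same constant, but does not prevent the constants from multiplying along the chain. Relatedly, your derivation of local boundedness of $M_\Gamma$ ``from \eqref{eq:HI} on fixed-shape spherical shells in $\Gamma$'' only yields boundedness on compact subsets of $\Gamma$, \emph{away} from $\partial\Gamma$; it says nothing about the behaviour as $\Gamma \ni x \to 0$, which is precisely what controls the sign of $\beta$. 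The paper's argument is: $G_\Gamma(x,\cdot)$ is regular co-harmonic in $\Gamma \setminus \Gamma_{8R}$, so the boundary Harnack principle at infinity \cite[Corollary 2.2, Remark 2.3, Example 5.1]{KSV17} gives $M_\Gamma(x) \leq c\, G_\Gamma(x,y_0)/G_\Gamma(\one,y_0)$ for all $x \in \Gamma_R$ and a fixed reference point $y_0$; this bounds $M_\Gamma$ on $\Gamma_R$ uniformly up to the boundary (including near the vertex $0$), and combined with $M_\Gamma(r\one) = r^\beta$ as $r \to 0$ forces $\beta \geq 0$. This is a BHP-at-infinity step, not an interior-Harnack step, and it is what your proposal is missing.

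Two smaller remarks. First, your treatment of the non-Greenian cases ($d=1$, $\alpha=1$, $\Gamma \in \{\R, \R\setminus\{0\}\}$; and $d=1$, $\alpha \in (1,2)$, $\Gamma = \R$) via exhaustion by Greenian shells and an appeal to \cite{KSV18} is murky: the Martin kernel at infinity is built from $G_\Gamma$, which is identically infinite in these cases, and the shells do not let $|y| \to \infty$. The paper sidesteps this entirely by observing that when $\Gamma^c$ is polar, $M_\Gamma = \ind_\Gamma$ with $\beta = 0$ already satisfies all the required properties. Second, you invoke \eqref{eq:HI} for equicontinuity of the family $(M_n)$; that requires the usual oscillation-decay consequence of Harnack rather than the Harnack inequality itself, which is standard but should be said.
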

The function $M_{\Gamma}$ is called the Martin kernel with pole at infinity for the cone $\Gamma$ and the process $\bfX$.

\begin{proof}[Proof of Lemma \ref{lem:M}]
	We first establish the existence of a function satisfying the desired properties. If $d=1$ and $\alpha=1$, then singletons are polar sets (see, for example, \cite[Chapter 8]{Sato99}). Thus, for $\Gamma=\R$ or $\Gamma=\R \setminus \{0\}$ we see that $M_{\Gamma}=\ind_{\Gamma}$ satisfies the required properties with $\beta=0$. The same candidate works for $d=1$, $\alpha \in (1,2)$ and $\Gamma=\R$. Thus, we may assume that $\Gamma$ is Greenian.
	
	Since $G_{\Gamma}(x,y)=0$ when $x \in \Gamma^c$ and $y \in \Rd$, we may and do assume that $x \in \Gamma$. By \cite[Example 5.1]{KSV17}, $\bfX$ satisfies all the assumptions stated in \cite{KSV17}; thus, by \cite[Proposition 4.1, Remark 4.2(b) and the discussion below it]{KSV18}, the infinity is accessible from $\Gamma$ with respect to $\bfX$, i.e. $\E_x \tau_{\Gamma} = \infty$ for every $x \in \Gamma$. Then it follows from \cite[Theorem 1.3(b)]{KSV18} that the limit
	\[
	M_{\Gamma}(x) = \lim_{\Gamma \ni y,|y| \to \infty} \frac{G_{\Gamma}(x,y)}{G_{\Gamma}(\one,y)}
	\]
	exists and is finite for every $x \in \Gamma$. Moreover, by \cite[proof of Theorem 1.3(b)]{KSV18}, for every bounded open set $B \subseteq \Gamma$,
	\[
	M_{\Gamma}(x) = \E_x M_{\Gamma}(X_{\tau_B}), \quad x \in B.
	\]
	That is, $M_{\Gamma}$ is regular harmonic in $B$. The existence part is thus established.
	
	In the remaining part we follow the proof of \cite[Theorem 3.2]{BB04} to derive its basic properties. First, we will show that $M_{\Gamma}$ is locally bounded and homogeneous of order $\beta = \beta(\alpha,\lambda,\Gamma) \in [0,\alpha)$, i.e.
	\begin{equation}\label{eq:M_hom}
	M_{\Gamma}(x) = |x|^{\beta}M_{\Gamma}(x/|x|), \quad x \in \Gamma.
	\end{equation}
	Indeed, in the non-Greenian case one has $M_{\Gamma} = \ind_{\Gamma}$ as above. Then $\beta=0$ and $M_{\Gamma}$ is locally bounded. Thus, we assume that $\Gamma$ is Greenian, fix $R>0$ and let $x \in \Gamma_R$. Since $G_{\Gamma}(x,\,\cdot\,)$ is regular co-harmonic in $\Gamma \setminus \Gamma_{8R}$, by the boundary Harnack principle at infinity \cite[Corollary 2.2, Remark 2.3 and Example 5.1]{KSV17}, there is $c>0$ such that for some fixed $y_0 \in \Gamma \setminus \Gamma_{8R}$,
	\[
	\frac{G_{\Gamma}(x,y)}{G_{\Gamma}(\one,y)} \leq c \frac{G_{\Gamma}(x,y_0)}{G_{\Gamma}(\one,y_0)}, \quad y \in \Gamma \setminus \Gamma_{8R}.
	\]
	Thus, $M_{\Gamma}$ is locally bounded.

	Next, recall that $k\Gamma=\Gamma$ for every $k>0$. Therefore, by the scaling property of $G_{\Gamma}$ \eqref{eq:GD_scaling}, for every $x,y \in \Gamma$,
	\[
	\frac{G_{\Gamma}(kx,y)}{G_{\Gamma}(\one,y)} \cdot \frac{G_{\Gamma}(\one,y)}{G_{\Gamma}(k\one,y)} = 
	\frac{G_{\Gamma}(kx,y)}{G_{\Gamma}(k\one,y)} = \frac{k^{-d+\alpha}{G_{\Gamma}(x,k^{-1}y)}}{k^{{-d+\alpha}} G_{\Gamma}(\one,k^{-1}y)} = \frac{{G_{\Gamma}(x,k^{-1}y)}}{G_{\Gamma}(\one,k^{-1}y)}.
	\]
	By taking the limit as $|y| \to \infty$ we see that
	\[
	M_{\Gamma}(kx) = M_{\Gamma}(x) M_{\Gamma}(k\one).
	\]
	It follows that
	\[
	M_{\Gamma}(kl\one) = M_{\Gamma}(k\one)M_{\Gamma}(l\one),
	\]
	for every positive $k,l$. Note that in view of \cite[Theorem 6.7]{Vondracek02}, $M_{\Gamma}$ is continuous on $\Gamma$. Thus, there is $\beta \in \R$ such that $M_{\Gamma}(k\one) = k^{\beta}M_{\Gamma}(\one)=k^{\beta}$ and hence,
	\[
	M_{\Gamma}(kx) = k^{\beta}M_{\Gamma}(x), \quad x \in \Gamma.
	\]
	The local boundedness of $M_{\Gamma}$ implies that $\beta \geq 0$ and Lemma \ref {lem:harmonic_growth_bound} entails that $\beta \leq \gamma < \alpha$. Thus, $\beta \in [0,\alpha)$.
	
	To conclude the proof, it remains to note that the uniqueness may be verified using the boundary Harnack principle \eqref{eq:BHP} exactly as in the proof of \cite[Theorem 3.2]{BB04}.
\end{proof}

\begin{remark}\label{rem:4}
	By inspecting the proof above, one can quickly verify that exactly the same reasoning may be applied to the dual process $\hbfX$. Therefore, the Martin kernel $\hM_{\Gamma}$ for the cone exists, too, and the statements of Lemma \ref{lem:M} and Corollary \ref{cor:surv} remain in force, when one replaces $G_{\Gamma}$ by $\widehat{G}_{\Gamma}$, $M_{\Gamma}$ by $\hM_{\Gamma}$ and $\beta$ by $\hbeta$. Note that Lemma \ref{lem:M} does not give information on the relation between $\beta$ and $\hbeta$, c.f. Remark \ref{rem:5}\eqref{rem:5_2} and Example \ref{ex:halfspace}.
\end{remark}
The following version of \cite[Theorem 2]{KBTGMR10} is now an immediate corollary of Lemma \ref{lem:M} and Remark \ref{rem:4}.
\begin{corollary}\label{cor:surv}
	Suppose $\Gamma$ is a $\kappa$-fat cone. Then for all $t>0$ and $x,y \in \Gamma$, we have
	\[
	\P_x(\tau_{\Gamma}>t) \approx \frac{M_{\Gamma}(x)}{M_{\Gamma}(A_{t^{1/\alpha}}(x))} \qquad \text{and} \qquad \hP_x(\tau_{\Gamma}>t) \approx \frac{\hM_{\Gamma}(y)}{\hM_{\Gamma}(A_{t^{1/\alpha}}(y))}.
	\]
	Furthermore,
	\[
	p_t^{\Gamma}(x,y) \approx \frac{M_{\Gamma}(x)}{M_{\Gamma}(A_{t^{1/\alpha}}(x))} p_t(x,y) \frac{\hM_{\Gamma}(y)}{\hM_{\Gamma}(A_{t^{1/\alpha}}(y))}, \quad t>0,\,x,y \in \Gamma.
	\]
	The comparability constants depend at most on $d,\alpha, \kappa$ and $\lambda$.
\end{corollary}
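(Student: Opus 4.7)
The plan is to prove the survival probability estimates first and then to deduce the factorisation of $p_t^\Gamma$ as a one-line consequence of Theorem \ref{thm:dhk} applied with $D = \Gamma$ and $R = \infty$.

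For the survival estimate I would first reduce to $t = 1$. By the scaling \eqref{eq:surv_scaling} together with the cone invariance $t^{-1/\alpha}\Gamma = \Gamma$,
\[
\P_x(\tau_\Gamma > t) = \P_{t^{-1/\alpha}x}(\tau_\Gamma > 1), \quad x \in \Gamma,\ t>0.
\]
Since $\kappa$-fatness of $\Gamma$ is preserved under rescaling, one may choose $A_{t^{1/\alpha}}(x) = t^{1/\alpha}A_1(t^{-1/\alpha}x)$, and the homogeneity \eqref{eq:Mk_hom} of $M_\Gamma$ then yields
\[
\frac{M_\Gamma(x)}{M_\Gamma(A_{t^{1/\alpha}}(x))} = \frac{M_\Gamma(t^{-1/\alpha}x)}{M_\Gamma(A_1(t^{-1/\alpha}x))}.
\]
Hence it suffices to prove $\P_y(\tau_\Gamma > 1) \approx M_\Gamma(y)/M_\Gamma(A_1(y))$ uniformly in $y \in \Gamma$.

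For this reduced claim I would follow the strategy of \cite[Theorem 2]{KBTGMR10}. Fix $y \in \Gamma$ and let $Q \in \partial\Gamma$ be a boundary point with $|y-Q|$ essentially equal to $\delta_\Gamma(y)$. If $\delta_\Gamma(y) \gtrsim 1$, the Harnack inequality \eqref{eq:HI} together with $\P_y(\tau_\Gamma > 1) \approx 1$ (which follows from Remark \ref{rem:2}, since a ball of radius comparable to $1$ around $y$ lies in $\Gamma$) already gives the comparison. In the remaining case I would apply the boundary Harnack principle \eqref{eq:BHP} in $\Gamma \cap B(Q, 4)$ to the pair $(M_\Gamma, h)$, where
\[
h(z) := \P_z\bigl(X_{\tau_{\Gamma \cap B(Q,4)}} \in \Gamma \setminus B(Q, 4)\bigr).
\]
Both functions are regular harmonic in $\Gamma \cap B(Q, 4)$ and vanish on $\Gamma^c \cap B(Q, 4)$ (the former by Lemma \ref{lem:M}), so \eqref{eq:BHP} delivers $M_\Gamma(y)/h(y) \approx M_\Gamma(A_2(Q))/h(A_2(Q))$. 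Since $A_2(Q)$ lies at distance at least $2\kappa$ from $\Gamma^c$, Proposition \ref{prop:Pr_est} combined with \eqref{eq:h} forces $h(A_2(Q)) \approx 1$, and a short Harnack chain identifies $M_\Gamma(A_2(Q)) \approx M_\Gamma(A_1(y))$. Finally, a Carleson-type argument in the spirit of Remark \ref{rem:2} yields $h(y) \approx \P_y(\tau_\Gamma > 1)$, closing the comparison.

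The dual estimate follows by running the same argument with $\hbfX$ in place of $\bfX$, which is legitimate since, as emphasised in Remark \ref{rem:4}, every tool used --- namely \eqref{eq:HI}, \eqref{eq:BHP}, Theorem \ref{thm:dhk}, and Lemma \ref{lem:M} --- is symmetric under $\bfX \leftrightarrow \hbfX$. Substituting the two survival estimates into \eqref{eq:main_thm_1} then yields the claimed factorisation of $p_t^\Gamma$. The main technical obstacle is the Carleson-type equivalence $h(y) \approx \P_y(\tau_\Gamma > 1)$ near $Q$: the upper bound requires decomposing the event $\{\tau_\Gamma > 1\}$ via the strong Markov property at the exit time from a suitably chosen $U^{y,c}$ and invoking the exit-time bounds of Remark \ref{rem:2} to separate the contribution of a quick jump into $\Gamma^c$ from the contribution of exiting into $\Gamma$ through the distant part of $\partial B(Q,4)$.
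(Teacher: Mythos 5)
Your proposal is correct and follows essentially the same roadmap as the paper, which refers to mimicking the proof of \cite[Theorem 2]{KBTGMR10} with exactly the ingredients you invoke: scaling to reduce to $t=1$, the boundary Harnack principle \eqref{eq:BHP} comparing $M_\Gamma$ with a local exit probability, Remark \ref{rem:2} to equate that exit probability with the survival probability, Lemma \ref{lem:M} for homogeneity, and Theorem \ref{thm:dhk} for the factorisation. The symmetry of all these tools under $\bfX \leftrightarrow \hbfX$ (Remark \ref{rem:4}) handles the dual statement exactly as you say.
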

\begin{proof}
	Recall that $\Gamma$ is $(\kappa,r)$-fat for every $r>0$ with the constant $\kappa$ independent of $r$. Thus, with Lemma \ref{lem:M}, Remark \ref{rem:2}, \eqref{eq:BHP} and Theorem \ref{thm:dhk} at hand, the proof is a mimic of the proof of \cite[Theorem 2]{KBTGMR10} and is therefore omitted. The dual counterpart follows by Remark \ref{rem:4}.
\end{proof}
\begin{remark}\label{rem:5}
	\hspace{2em}
	\begin{enumerate}
		\item\label{rem:5_1} Proceeding as at the end of the proof of \cite[Theorem 3.2]{BB04} with the help of \cite[Proposition 6.1(3)]{Vondracek02}, one can see that $\beta=0$ if and only if $\Gamma^c$ is polar for $\bfX$. Moreover, the following \emph{monotonicity} of the homogeneity index holds: if $\gamma,\Gamma$ are $\kappa$-fat cones in $\Rd$ and $\gamma \subseteq \Gamma$, then $\beta(\alpha,\lambda,\Gamma) \geq \beta(\alpha,\lambda,\gamma)$ and the equality holds if and only if $\Gamma \setminus \gamma$ is a polar set for $\bfX$. This is proved exactly as in \cite[Lemma 3.3]{BB04}. 
		\item\label{rem:5_2} By \cite[Theorem 42.30]{Sato99}, $\Gamma^c$ is polar for $\bfX$ if and only if it is polar for the dual process $\hbfX$. Therefore, using part \eqref{rem:5_1} we conclude that $\beta=0$ if and only if $\hbeta=0$. 
	\end{enumerate}
	
\end{remark}
\begin{remark}
	By \cite[Theorem 6.7]{Vondracek02}, $M_{\Gamma}$ is continuous in $\Gamma$.
\end{remark}

Let us discuss some examples, where the Martin kernel can be explicitly identified. Note that even in the isotropic setting, there are but a few cases when the exponent $\beta$ and the formula for $M_{\Gamma}$ are explicitly known, c.f. \cite{BB04} and \cite{Michalik06}. If we step outside the isotropic world, then the situation gets more complicated. The simplest case was already hinted in the proof of Lemma \ref{lem:M}.
\begin{example}
	Let $\Gamma$ be $\kappa$-fat and such that $\Gamma^c$ is a polar set. Then $M_{\Gamma}=\ind_{\Gamma}$ satisfies the required properties and by Lemma \ref{lem:M}, it is the Martin kernel for $\Gamma$. This class includes $\Gamma=\Rd$ for $\alpha \in (0,2)$ and $\Gamma = \Rd \setminus \{x_d=0\}$ for $\alpha \in (0,1]$, since then by \cite[Theorem 42.30]{Sato99}, $\{x_d=0\}$ is a polar set.
\end{example}
 Now let $\Gamma$ be the half-space $\H_d:= \{x \in \Rd \colon x_d>0\}$ in $\Rd$. In the rotation-invariant case, we have by \cite[Example 3.2]{BB04} that $M_{\Gamma}(x)=\delta_{\Gamma}(x)^{\alpha/2}$ and $\beta = \alpha/2$. In view of the boundary Harnack inequality, this exponent is in line with the typical rate of decay of harmonic functions in smooth domains, c.f. Chen and Song \cite{MR1654824} and Kulczycki \cite{Kulczycki99}. It turns out that dropping the symmetry assumption results in decay rate dependent on the asymmetry of the process, c.f. Juszczyszyn \cite[Theorem 1.1]{TJ21}. It also reflects in the homogeneity order of $M_{\Gamma}$, as we argue below. Recall that every finite measure $\zeta$ on the sphere $\Sd$ induces a strictly $\alpha$-stable L\'{e}vy process on $\Rd$. 
\begin{example}\label{ex:halfspace}
	 Suppose that $\zeta$ has a density $\lambda$ with respect to the surface measure $\sigma$, which is H\"{o}lder continuous on $\Sd$ of (some fixed) order $\epsilon>0$. Let $\Gamma = \H_d$ be a half-space in $\Rd$. By \cite[Lemma 2.16]{TJ21}, a function $h(x):=\delta_{\Gamma}(x)^{\gamma}$ is regular harmonic in every bounded open subset of $\H_d$, where $\gamma = \alpha \P(\langle X_1, \one \rangle > 0)$. By Lemma \ref{lem:M}, it is a Martin kernel for $\Gamma$ for the strictly $\alpha$-stable L\'{e}vy process $\bfX$ corresponding to the spherical density $\lambda$, with the homogeneity order $\beta = \alpha \P(\langle X_1, \one \rangle > 0)$.
	
	Let us take a closer look on the exponent $\beta$. It is clear that if $\bfX$ is symmetric (not necessarily isotropic), then $\beta=\alpha/2$ as in \cite[Example 3.2]{BB04}. In general, the process $Y_t:= \langle X_t,\one \rangle$ is a one-dimensional strictly $\alpha$-stable L\'{e}vy process with the (one-dimensional) L\'{e}vy density $\nu(z) = c_- |z|^{-1-\alpha}\ind_{z<0}+c_+|z|^{-1-\alpha}\ind_{z>0}$, where
	\[
	c_- = \int_{\Sd \cap \{x_d>0\}}\lambda(-w)w_d^{\alpha}\ud w \qquad \text{and} \qquad c_+ = \int_{\Sd \cap \{x_d>0\}}\lambda(w)w_d^{\alpha}\ud w,
	\]
	see e.g. \cite[Lemma 2.12]{TJ21}. Let $\alpha \neq 1$. By Zolotarev \cite{Zolotarev57}, one has
	\[
	\beta = \alpha \P(Y_1>0) =  \frac12 + \frac{1}{\pi\alpha} \arctan \bigg( \frac{c_+-c_-}{c_++c_-} \tan \frac{\pi \alpha}{2} \bigg).
	\]
	In particular, choosing $\lambda$ so that $c_+ \neq c_-$ yields $\beta \neq \alpha/2$.
\end{example}
The feature presented in Example \ref{ex:halfspace} has one expected, but nonetheless interesting consequence. We still assume $\Gamma = \H_d$ and let $\hM_{\Gamma}$ be the Martin kernel for the dual process $\hbfX$ for $\Gamma$ with the homogeneity order $\hbeta$. Continuing with the notation from Example \ref{ex:halfspace}, it is clear $\widehat{\lambda}(w) = \lambda(-w)$ for $w \in \Sd$. Thus, $\widehat{c}_- = c_+$, $\widehat{c}_+=c_-$ and consequently, $\hbeta \neq\beta$ unless $c_+ = c_-$. Put differently, even in the simple case of the half-space, the homogeneity (growth) orders $\beta$ and $\hbeta$ may be different. Consequently, in general one cannot expect the same exponents (that is: $\beta = \hbeta$) for dual processes $\bfX$ and $\hbfX$. 


We continue with Example \ref{ex:halfspace} and use it to derive explicit estimates on the Dirichlet heat kernel for $\H_d$.
\begin{example}\label{ex:halfspace2}
	Again let $\Gamma=\H_d$, $t>0$ and suppose that $x,y \in \Gamma$. Note that for every $r>0$ we have $\delta_{\Gamma} (A_r(x)) \approx r \vee \delta_{\Gamma}(x)$. Corollary \ref{cor:surv} together with Example \ref{ex:halfspace} then imply 
	\[
	\P_x(\tau_{\Gamma}>t) \approx \frac{M_{\Gamma}(x)}{M_{\Gamma}(A_{t^{1/\alpha}}(x))}  = \bigg(\frac{\delta_{\Gamma}(x)}{t^{1/\alpha} \vee \delta_{\Gamma}(x)}\bigg)^\beta \approx \bigg(1 \wedge \frac{\delta_{\Gamma}(x)}{t^{1/\alpha}}\bigg)^{\beta}, \quad t>0,\,x \in \Gamma.
	\]
	Similarly,
	\[
	\hP_y(\tau_{\Gamma}>t) \approx \bigg(1 \wedge \frac{\delta_{\Gamma}(y)}{t^{1/\alpha}}\bigg)^{\hbeta}, \quad t>0, \,y \in \Gamma.
	\]
	Therefore, again by Corollary \ref{cor:surv}, the global explicit two-sided estimate holds:
	\[
	p_t^{\Gamma}(x,y) \approx \bigg(1 \wedge \frac{\delta_{\Gamma}(x)}{t^{1/\alpha}}\bigg)^\beta p_t(x,y) \bigg(1 \wedge \frac{\delta_{\Gamma}(y)}{t^{1/\alpha}}\bigg)^{\hbeta}, \quad t>0, \,x,y \in \Gamma.
	\]
\end{example}
As a side remark, we observe that in this example one has $\beta + \hbeta = \alpha \P(Y_1 \neq 0)$. Thus, by \cite[Theorem 42.30]{Sato99}, $\beta + \hbeta=\alpha$ if and only if $\alpha \in (0, 1]$. As a side remark we note that in view of \cite[Lemma 2.14]{TJ21}, in this case there are constants $\beta_{{\rm min}},\beta_{{\rm max}}$ satisfying $\beta_{{\rm min}} > \max \{0,\alpha-1\}$, $\beta_{{\rm max}} < \min \{\alpha,1\}$, $\beta_{{\rm min}}+\beta_{{\rm max}}=\alpha$ and such that  $\beta,\hbeta \in [ \beta_{{\rm min}}, \beta_{{\rm max}}]$.

\subsection{Yaglom limit in cones}
In the remaining part of the paper we apply the method developed in \cite{un_BKLP23} to obtain the Yaglom limit for non-symmetric strictly $\alpha$-stable L\'{e}vy processes in arbitrary $\kappa$-fat cones. Its versatility lies in the fact that once the necessary tools are available, the existence of the limit in Theorem \ref{thm:Yaglom} follows by a similar argument. These basic blocks are: boundary Harnack inequality, sharp two-sided estimates on the Dirichlet heat kernel of the cone and nice scaling properties of a $P_t^{\Gamma}$-invariant function, which is later used for the version of Doob conditioning of the process killed when exiting the cone $\Gamma$. In our case, the first brick follows from Bogdan et al. \cite{BKK15}, the second is provided by Theorem \ref{thm:dhk}, and the third is a consequence of properties of the Martin kernel $M_{\Gamma}$ for the cone, which are gathered in Lemma \ref{lem:M}. With these tools at hand, we follow step by step the procedure described in \cite{un_BKLP23}. For the convenience of the reader, we provide all the details, unless the extension is immediate and requires no changes in the proof.

Let $\Gamma$ be a fixed $\kappa$-fat cone in $\Rd$ such that $\one \in \Gamma$. Recall that by Lemma \ref{lem:M}, $M_{\Gamma}$ is homogeneous of order $\beta \in [0,\alpha)$ and regular harmonic in every bounded subset of $\Gamma$. Thus, following \cite[Theorem 3.1]{un_BKLP23} one can prove that $M_{\Gamma}$ is invariant for the killed semigroup $P_t^{\Gamma}$, i.e. for every $t>0$ and $x \in \Gamma$,
\begin{equation}\label{eq:M_inv}
P_t^{\Gamma}M_{\Gamma}(x) = M_{\Gamma}(x).
\end{equation}
These two properties justify the following definition of the (version of) conditioned kernel
\begin{equation}\label{eq:rho_def}
\rho_t(x,y) = \frac{p_t^{\Gamma}(x,y)}{M_{\Gamma}(x)\hM_{\Gamma}(y)}, \quad x,y \in \Gamma,\,t>0.
\end{equation}
Using \eqref{eq:M_inv} it is easy to see that
\begin{equation}\label{eq:rho_density}
\int_{\Gamma} \rho_t(x,y)M_{\Gamma}(y)\hM_{\Gamma}(y)\ud y = 1, \quad x \in \Gamma,\,t>0,
\end{equation}
and
\begin{equation}\label{eq:rho_CH-K}
\int_{\Gamma} \rho_t(x,y)\rho_s(y,z) M_{\Gamma}(y)\hM_{\Gamma}(y)\ud y = \rho_t(x,z), \quad x,z \in \Gamma,\,t,s>0.
\end{equation}
Thus, $\rho_t$ is a transition probability density on $\Gamma$ with respect to the measure $M_{\Gamma}(y)\hM_{\Gamma}(y)\ud y$. We note that $\rho_t$ \emph{need not} be symmetric. By \eqref{eq:dhk_scaling} and Lemma \ref{lem:M}, the following scaling property holds for $\rho$: for all $x,y \in \Gamma$ and all $t>0$,
\begin{equation}\label{eq:rho_scaling}
\rho_t(x,y) = \frac{t^{-d/\alpha}p_1^{\Gamma}\big(t^{-1/\alpha}x,t^{-1/\alpha}y\big)}{t^{(\beta+\hbeta)/\alpha}M_{\Gamma}\big(t^{-1/\alpha}x\big) \hM_{\Gamma}\big(t^{-1/\alpha}y\big)} = t^{-(d+\beta+\hbeta)/\alpha}\rho_1 \big( t^{-1/\alpha}x,t^{-1/\alpha}y \big).
\end{equation}
Put differently,
\begin{equation}\label{eq:rho_scaling2}
\rho_{st} \big( t^{1/\alpha}x,t^{1/\alpha}y \big) = t^{-(d+\beta+\hbeta)/\alpha} \rho_s(x,y), \quad x,y \in \Gamma, \,s,t>0.
\end{equation}
By Theorem \ref{thm:dhk}, we also have
\begin{equation}\label{eq:rho_f}
\rho_t(x,y) \approx \frac{\P_x(\tau_{\Gamma}>t)}{M_{\Gamma}(x)}p_t(x,y) \frac{\hP_y(\tau_{\Gamma}>t)}{\hM_{\Gamma}(y)}, \quad x,y \in \Gamma, \,t>0.
\end{equation}
The structure of the factorisation \eqref{eq:rho_f} is one of reasons why we choose to define $\rho_t$ as in \eqref{eq:rho_def} instead of classical Doob h-transform using the invariant function $M_{\Gamma}$.

In what follows, it will be crucial to control the expression $\P_x(\tau_{\Gamma}>t)/M_{\Gamma}(x)$ and its dual counterpart $\hP_y(\tau_{\Gamma}>t)/\hM_{\Gamma}(y)$ for $x,y \in \Gamma$. With the boundary Harnack principle \eqref{eq:BHP}, the Ikeda-Watanabe formula \eqref{eq:IW_density} and Lemma \ref{lem:M} at hand, one can directly repeat the proof of \cite[Lemma 4.2]{BB04} and its extension \cite[Lemma 3.2]{un_BKLP23} to get the following.
\begin{proposition}\label{prop:M_surv}
	\hspace{2em}
	\begin{enumerate}
		\item\label{prop:M_surv_1} For every $R \in (0,\infty)$, there exists a constant $c_1=c_1(\alpha,\Gamma,\lambda,R)$ such that
		\[
		c_1^{-1}M_{\Gamma}(x)t^{-\beta/\alpha} \leq \P_x(\tau_{\Gamma}>t) \leq c_1 M_{\Gamma}(x)t^{-\beta/\alpha}, \quad x \in \Gamma_{Rt^{1/\alpha}},\,t>0.
		\] 
		\item\label{prop:M_surv_2} There exists a constant $c_2=c_2(\alpha,\Gamma,\lambda)$, such that
		\[
		\P_x(\tau_{\Gamma}>t ) \leq c_2 \big( t^{-\beta/\alpha}+t^{-1}|x|^{\alpha-\beta} \big)M_{\Gamma}(x), \quad x \in \Gamma, \,t>0.
		\]
		\item\label{prop:M_surv_3} Statements \eqref{prop:M_surv_1} and \eqref{prop:M_surv_2} hold true for the dual process $\hbfX$, too.
	\end{enumerate}
\end{proposition}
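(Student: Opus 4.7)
The plan is to adapt verbatim the proofs of \cite[Lemma 4.2]{BB04} and its refinement \cite[Lemma 3.2]{un_BKLP23}, since all the ingredients they use are now in hand for our non-symmetric setting: Corollary \ref{cor:surv}, the homogeneity and local boundedness of $M_{\Gamma}$ (Lemma \ref{lem:M}), the global scale-invariant boundary Harnack principle \eqref{eq:BHP} (which by \cite{BKK15} holds with identical constants for both $\bfX$ and $\hbfX$), the Harnack inequality \eqref{eq:HI}, and the Ikeda--Watanabe formula \eqref{eq:IW_density}.

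I would start with part \eqref{prop:M_surv_1}. By Corollary \ref{cor:surv} the task reduces to showing $M_{\Gamma}(A_{t^{1/\alpha}}(x)) \approx t^{\beta/\alpha}$ uniformly in $x \in \Gamma_{Rt^{1/\alpha}}$. Put $A := A_{t^{1/\alpha}}(x)$ and assume $\Gamma \neq \Rd$ (otherwise $M_{\Gamma} \equiv 1$, $\beta = 0$, and the claim is trivial); then $0 \in \partial\Gamma$, and the $\kappa$-fat property forces $\kappa t^{1/\alpha} \leq |A| \leq (R+1)t^{1/\alpha}$ together with $\dist(A,\partial\Gamma) \geq \kappa t^{1/\alpha}$. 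Scale invariance of $\Gamma$ then places the unit vector $A/|A|$ in a compact subset $K_R \subseteq \Gamma \cap \Sd$ bounded away from $\partial\Gamma$, on which $M_{\Gamma}$ is continuous and strictly positive, hence comparable to constants (using \eqref{eq:HI} if needed). Homogeneity \eqref{eq:Mk_hom} converts this into the desired two-sided bound on $M_{\Gamma}(A)$.

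For part \eqref{prop:M_surv_2}, the scaling \eqref{eq:surv_scaling} together with the homogeneity of $M_{\Gamma}$ reduces the claim to a single scale: it suffices to show $\P_z(\tau_{\Gamma}>1) \leq c\big(1+|z|^{\alpha-\beta}\big)M_{\Gamma}(z)$ for every $z \in \Gamma$. The case $|z|\leq 1$ is part \eqref{prop:M_surv_1} applied with $R=1$, $t=1$. For $|z|>1$ I would mimic \cite[Lemma 4.2]{BB04}: use the strong Markov property at $\tau_{B(z,|z|/2)}$ to decompose $\P_z(\tau_{\Gamma}>1)$ into a small term $\P_z(\tau_{B(z,|z|/2)}>1) \lesssim |z|^{-\alpha}$ (via Chebyshev's inequality and Pruitt's bound $\E_0\tau_{B_r} \approx r^{\alpha}$) plus an Ikeda--Watanabe integral against the Poisson kernel $P_{B(z,|z|/2)}(z,\,\cdot\,)$; the latter is then compared, through \eqref{eq:BHP} applied to $M_{\Gamma}$ and $y \mapsto P_{B(z,|z|/2)}(z,y)$ (both regular $\bfX$-harmonic on $B(z,|z|/2) \cap \Gamma$ and vanishing on $\Gamma^c$), with an appropriate multiple of $M_{\Gamma}(z)$ that absorbs the factor $|z|^{\alpha-\beta}$. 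Part \eqref{prop:M_surv_3} is then immediate from Remark \ref{rem:4}, by transcribing the argument with $\hM_{\Gamma}$, $\hbeta$, $\hbfX$ replacing $M_{\Gamma}$, $\beta$, $\bfX$.

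The chief obstacle is the large-$|z|$ bound in part \eqref{prop:M_surv_2}: since $M_{\Gamma}(z/|z|)$ may tend to zero as $z/|z|$ approaches $\partial\Gamma \cap \Sd$, one cannot afford to divide by $M_{\Gamma}$ at boundary-type points, and every comparison must be routed through \eqref{eq:BHP} with a reference point at distance of order $|z|$ sitting deep inside $\Gamma$, precisely as in \cite{BB04}. This is where the fact that \eqref{eq:BHP} holds in our anisotropic setting with the same constants for $\bfX$ and $\hbfX$ becomes essential.
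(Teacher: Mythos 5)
Your overall plan --- transcribe the proofs of \cite[Lemma 4.2]{BB04} and \cite[Lemma 3.2]{un_BKLP23} using Corollary \ref{cor:surv}, Lemma \ref{lem:M}, \eqref{eq:BHP}, \eqref{eq:HI} and the Ikeda--Watanabe formula --- is exactly what the paper does; its ``proof'' of this proposition is nothing more than that remark. Your sketch of part \eqref{prop:M_surv_1} is sound: scale-invariance of $\Gamma$ places $A_{t^{1/\alpha}}(x)/|A_{t^{1/\alpha}}(x)|$ in a compact subset of $\Gamma\cap\Sd$ bounded away from $\partial\Gamma$, where $M_\Gamma$ is continuous and strictly positive, and homogeneity \eqref{eq:Mk_hom} finishes. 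The scaling reduction of part \eqref{prop:M_surv_2} to $t=1$ is also correct, as is the treatment of $|z|\leq 1$ and of part \eqref{prop:M_surv_3}.

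The gap is in your proposed argument for part \eqref{prop:M_surv_2} when $|z|>1$. The asserted estimate $\P_z(\tau_{B(z,|z|/2)}>1)\lesssim |z|^{-\alpha}$ is false: by scaling, $\P_z(\tau_{B(z,|z|/2)}>1)=\P_0\big(\tau_{B_1}>(2/|z|)^\alpha\big)$, which is bounded below by $\P_0(\tau_{B_1}>1)>0$ for $|z|\geq 2$ and tends to $1$ as $|z|\to\infty$; Markov's inequality with Pruitt's bound $\E_0\tau_{B_r}\approx r^\alpha$ in fact gives the useless $\lesssim|z|^{\alpha}$, not $|z|^{-\alpha}$. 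So that piece of the decomposition is not small. Moreover, $y\mapsto P_{B(z,|z|/2)}(z,y)$ is the density of the exit distribution, supported on the complement of the ball; it is not a regular $\bfX$-harmonic function on $B(z,|z|/2)\cap\Gamma$, so \eqref{eq:BHP} cannot be applied to it in the way you describe. A route that does work within the toolkit you already list: by Corollary \ref{cor:surv}, the claim for $|z|>1$ reduces to the lower bound $M_\Gamma(A_1(z))\gtrsim |z|^{\beta-\alpha}$. Write $A_1(z)=|A_1(z)|\,u$ with $|A_1(z)|\approx|z|$ and $u\in\Gamma\cap\Sd$, $\delta_\Gamma(u)\gtrsim 1/|z|$; then Lemma \ref{lem:harmonic_growth_bound} (with its exponent $\gamma<\alpha$) combined with \eqref{eq:HI} gives $M_\Gamma(u)\gtrsim \delta_\Gamma(u)^{\gamma}\gtrsim |z|^{-\alpha}$, and homogeneity yields $M_\Gamma(A_1(z))\gtrsim |z|^{\beta-\alpha}$, as needed. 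This is closer in spirit to what \cite{BB04} and \cite{un_BKLP23} actually do than the ball decomposition you wrote.
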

Proposition \ref{prop:M_surv} together with \eqref{eq:rho_f} and \eqref{eq:11} yield two crucial estimates on the conditioned kernel $\rho_t$. First, from \eqref{prop:M_surv_1} we get that
\begin{equation}\label{eq:rho_est}
\rho_1(x,y) \approx (1+|y|)^{d-\alpha} \frac{\hP_y(\tau_{\Gamma}>1)}{\hM_{\Gamma}(y)}, \quad x \in \Gamma_R, \,y \in \Gamma,
\end{equation}
with the implied constant dependent at most on $\alpha$, $\Gamma$, $\lambda$ and $R$. By parts \eqref{prop:M_surv_2} and \eqref{prop:M_surv_3}, one also concludes that there is a constant $c=c(\alpha,\Gamma,\lambda,R)$, such that
\begin{equation}\label{eq:rho_est2}
\rho_1(x,y) \leq c(1+|y|)^{-d-\hbeta}, \quad x \in \Gamma_R,\,y \in \Gamma.
\end{equation}
In a similar way, one also obtains
\begin{equation}\label{eq:rho_est3}
	\rho_1(x,y) \leq c(1+|x|)^{-d-\beta}, \quad y \in \Gamma_R,\,x \in \Gamma.
\end{equation}
\begin{remark}\label{rem:3}
We note that it follows from \eqref{eq:rho_density} and \eqref{eq:rho_est} that the function
\[
\Gamma \ni y \mapsto (1+|y|)^{d-\alpha} \frac{\hP_y(\tau_{\Gamma}>1)}{\hM_{\Gamma}(y)}
\]
is integrable with respect to $M_{\Gamma}(y)\hM_{\Gamma}(y)\ud y$.
\end{remark}
Following \cite{BJKP23_jfa} and \cite{un_BKLP23}, we now apply a change of time and rescaling of space by setting
\begin{equation}\label{eq:26}
\ell_t(x,y):= \rho_{1-e^{-t}} \big( e^{-t/\alpha}x,y \big), \quad x,y \in \Gamma, \,t>0.
\end{equation}
Using \eqref{eq:rho_density} and \eqref{eq:rho_CH-K} one can quickly verify that
\begin{equation}\label{eq:ell_density}
\int_{\Gamma} \ell_t(x,y)M_{\Gamma}(y)\hM_{\Gamma}(y) \ud y = 1, \quad x \in \Gamma,\,t>0,
\end{equation}
and
\begin{equation}\label{eq:ell_Ch-K}
\int_{\Gamma} \ell_t(x,y)\ell_s(y,z)M_{\Gamma}(y)\hM_{\Gamma}(y)\ud y = \ell_{s+t}(x,z), \quad x,z \in \Gamma,\,s,t>0.
\end{equation}
To wit, $\ell_t$ is a transition probability density on $\Gamma$ with respect to $M_{\Gamma}(y)\hM_{\Gamma}(y)\ud y$. The corresponding (Ornstein-Uhlenbeck type) semigroup is defined as follows:
\begin{equation}\label{eq:Lt}
L_t f(y) = \int_{\Gamma} \ell_t(x,y)f(x)M_{\Gamma}(x)\hM_{\Gamma}(x)\ud x, \quad y \in \Gamma, \,t>0.
\end{equation}
It is easy to see that the operators $L_t$ are bounded and linear, thus continuous on $L^1(M_{\Gamma}(y)\hM_{\Gamma}(y)\ud y)$. Moreover, denote a non-negative function $\vphi$ satisfying $\int_{\Gamma} \vphi(x)M_{\Gamma}(x)\hM_{\Gamma}(x)\ud x=1$ a \emph{density}. The Fubini-Tonelli theorem entails that for every $f \geq 0$,
\[
\int_{\Gamma} L_tf(y) M_{\Gamma}(y)\hM_{\Gamma}(y)\ud y = \int_{\Gamma} f(x) M_{\Gamma}(x)\hM_{\Gamma}(x) \ud x.
\]
Thus, the operators $L_t$ preserve densities.

The following result is crucial for our development. As we shall see, the function $\vphi$ from Theorem 
\ref{thm:s_density} will provide a density of the Yaglom measure, see Theorem \ref{thm:Y2}.
\begin{theorem}\label{thm:s_density}
	Let $\Gamma$ be a $\kappa$-fat cone. There is a unique stationary density $\vphi$ for the operators $L_t$, $t>0$.
\end{theorem}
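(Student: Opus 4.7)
The plan is to adapt the Krylov--Bogolyubov scheme for the Markov semigroup $L_t$ on $L^1(\Gamma, M_\Gamma\hM_\Gamma\ud y)$, following \cite{un_BKLP23}: existence will come from weak $L^1$-compactness of Cesàro averages of orbits of $L_t$, and uniqueness from strict positivity of $\ell_t$.

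For existence, I would start from a density $f_0$ supported in a bounded set $K\subseteq \Gamma$ and form the Cesàro averages $\varphi_T:=T^{-1}\int_1^T L_t f_0\ud t$. The crucial preliminary step is a $t$-uniform integrable domination of $L_t f_0(y)$. Writing $r_t:=1-e^{-t}$, the scaling \eqref{eq:rho_scaling2} gives
\[
\ell_t(x,y)=r_t^{-(d+\beta+\hbeta)/\alpha}\,\rho_1\big(r_t^{-1/\alpha}e^{-t/\alpha}x,\,r_t^{-1/\alpha}y\big),
\]
and for $t\geq 1$ the first argument stays in a bounded subset of $\Gamma$ depending only on $K$, so the two-sided bound \eqref{eq:rho_est} applies. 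Combining it with the homogeneity of $\hM_\Gamma$ from Lemma \ref{lem:M} and the scaling \eqref{eq:surv_scaling} for $\hP$, all $t$-dependent factors should cancel (as $r_t\in[1-e^{-1},1)$), leaving
\[
\ell_t(x,y)\leq C(K)(1+|y|)^{d-\alpha}\frac{\hP_y(\tau_\Gamma>1)}{\hM_\Gamma(y)},\qquad x\in K,\,t\geq 1.
\]
By Remark \ref{rem:3} the right-hand side is integrable against $M_\Gamma(y)\hM_\Gamma(y)\ud y$, so $\{L_t f_0\}_{t\geq 1}$ is uniformly integrable and hence relatively weakly $L^1$-compact by Dunford--Pettis. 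Passing to a subsequence $\varphi_{T_n}\rightharpoonup \varphi$, mass preservation forces $\varphi$ to be a density, and the identity
\[
L_s\varphi_T-\varphi_T=\frac{1}{T}\int_T^{T+s}L_u f_0\ud u-\frac{1}{T}\int_1^{1+s}L_u f_0\ud u,
\]
whose right-hand side tends to $0$ in $L^1$-norm, together with weak continuity of the bounded operator $L_s$ on $L^1$, yields $L_s\varphi=\varphi$ for every $s>0$.

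For uniqueness I would first note that $p_t^\Gamma>0$ on $\Gamma\times\Gamma$ (directly from Theorem \ref{thm:dhk} and strict positivity of $p_t$ and of $\P_\cdot(\tau_\Gamma>t)$, $\hP_\cdot(\tau_\Gamma>t)$), so $\ell_t>0$ everywhere and $L_t$ is positivity-improving. If $\varphi_1,\varphi_2$ are two stationary densities, set $\psi:=\varphi_1-\varphi_2$; the zero-mean condition $\int \psi M_\Gamma\hM_\Gamma\ud y=0$ forces $\psi^+,\psi^-$ to be simultaneously trivial or both non-trivial, and in the latter case the strict positivity of $L_t\psi^\pm$ combined with $L_t\psi=\psi$ produces
\[
\|\psi\|_{L^1}=\|L_t\psi^+-L_t\psi^-\|_{L^1}<\|L_t\psi^+\|_{L^1}+\|L_t\psi^-\|_{L^1}=\|\psi\|_{L^1},
\]
a contradiction, so $\psi\equiv 0$.

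The chief obstacle is the $t$-uniform integrable majorant. The divergent factor $r_t^{-(d+\beta+\hbeta)/\alpha}$ in the scaling of $\rho$ must cancel exactly against the homogeneity of $\hM_\Gamma$ and the scaling of $\hP$ so that the resulting bound falls into the integrability framework of Remark \ref{rem:3}. This cancellation is the structural reason why \eqref{eq:rho_est} is the correct two-sided estimate for this problem; once it is secured, the remaining Krylov--Bogolyubov and positivity-improving arguments are routine.
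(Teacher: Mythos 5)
Your proposal is correct, and it takes a genuinely different construction from the paper's while resting on the same analytic core. The paper establishes existence by exhibiting a convex family
$F=\{\,y\mapsto\int_{\Gamma_1}\rho_1(x,y)\,\eta(\od x):\eta\ \text{sub-probability on }\Gamma_1\,\}$,
shows via \eqref{eq:rho_est} and Remark~\ref{rem:3} that $F$ is uniformly integrable, checks $L_tF\subseteq F$ by the scaling and Chapman--Kolmogorov identities, and then invokes the Schauder-type fixed-point step from \cite{un_BKLP23} to obtain $\vphi\in\overline{F}$ with $L_t\vphi=\vphi$ for the chosen $t$; the passage from a single-time fixed point to a semigroup-stationary density then needs the commutativity of $\{L_t\}$, which the paper uses in the uniqueness step together with strict positivity, following \cite{BJKP23_jfa}. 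You instead run Krylov--Bogolyubov: your $t$-uniform majorant for $\ell_t(x,y)$ on $x\in K$, $t\geq 1$ is exactly the observation that, since $r_t=1-e^{-t}\in[1-e^{-1},1)$, the divergent prefactor $r_t^{-(d+\beta+\hbeta)/\alpha}$ is bounded and is further offset by $\hM_\Gamma(r_t^{-1/\alpha}y)=r_t^{-\hbeta/\alpha}\hM_\Gamma(y)$ and $\hP_{r_t^{-1/\alpha}y}(\tau_\Gamma>1)=\hP_y(\tau_\Gamma>r_t)\lesssim\hP_y(\tau_\Gamma>1)$ (the latter by Remark~\ref{rem:2} since a cone is $(\kappa,r)$-fat for every $r$), so one lands precisely in the integrability class of Remark~\ref{rem:3}; Dunford--Pettis then yields weak $L^1$ compactness of the Cesàro averages, the telescoping identity kills $L_s\varphi_T-\varphi_T$ in norm, and weak continuity of $L_s$ gives $L_s\varphi=\varphi$ for every $s$ simultaneously. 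This buys you a slightly more self-contained existence argument that does not require a fixed-point theorem and produces a density invariant under the whole semigroup in one pass, so commutativity is not needed; the trade-off is that the paper's invariant-family $F$ also gives structural information ($\vphi\in\overline{F}$), which the paper later uses implicitly when identifying $\vphi$ pointwise in Lemma~\ref{lem:vphi_mod}, whereas your scheme only hands you a weak $L^1$ limit and you would re-derive the pointwise two-sided bound on $\vphi$ from Corollary~\ref{cor:rho_L1_conv} exactly as the paper does. Your uniqueness argument via strict positivity of $\ell_t$ and the strict triangle inequality $\|L_t\psi^+-L_t\psi^-\|_{L^1}<\|L_t\psi^+\|_{L^1}+\|L_t\psi^-\|_{L^1}$ is correct and matches the essential content of the reference the paper cites. (One cosmetic remark: the exponent in your majorant should read $(1+|y|)^{-d-\alpha}$; the sign error is inherited from a typo in \eqref{eq:rho_est}, as is clear from \eqref{eq:11} and Lemma~\ref{lem:vphi_mod}.)
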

The proof of Theorem \ref{thm:s_density} follows the one of \cite[Theorem 3.4]{un_BKLP23} with the necessary adaptations resulting from a (slightly) different reference measure and the lack of symmetry of the kernels $\rho_t$. For this reason, we provide the key calculations for the convenience of the reader.
\begin{proof}[Proof of Theorem \ref{thm:s_density}]
	Fix $t>0$. First observe that by \eqref{eq:rho_scaling2}, for a non-negative function $f$,
	\begin{align*}
			L_tf(z) &= \int_{\Gamma} \rho_{1-e^{-t}} \big( e^{-t/\alpha}y,z \big) f(y)M_{\Gamma}(y)\hM_{\Gamma}(y)\ud y \\ &= \int_{\Gamma} e^{t(d+\beta+\hbeta)/\alpha} \rho_{e^t-1} \big( y,e^{t/\alpha} z \big) f(y) M_{\Gamma}(y)\hM_{\Gamma}(y) \ud y.
	\end{align*}
	Consider a family $F$ of non-negative functions $f$ of the form
	\[
	f(y) = \int_{\Gamma_1} \rho_1(x,y) \,\eta(\od x), \quad y \in \Gamma.
	\]
	where $\eta$ is some sub-probability measure concentrated on $\Gamma_1$. We first observe that by \eqref{eq:rho_est}, for every $f \in F$,
	\[
	f(y) \lesssim (1+|y|)^{-d-\alpha} \frac{\hP_y(\tau_{\Gamma}>1)}{\hM_{\Gamma}(y)}, \quad y \in \Gamma,
	\] 
	with the implied constant dependent at most on $\alpha,\Gamma$ and $\lambda$. Thus, by Remark \ref{rem:3}, the family $F$ is uniformly integrable with respect to $M_{\Gamma}(y)\hM_{\Gamma}(y)\ud y$. Moreover, by the Fubini-Tonelli theorem, \eqref{eq:rho_CH-K} and \eqref{eq:rho_scaling2}, for $f \in F$,
	\begin{align*}\begin{aligned}
		L_t f(z) &= \int_{\Gamma_1} e^{t(d+\beta+\hbeta)/\alpha} \int_{\Gamma} \rho_1(x,y) \rho_{e^t-1}\big(y,e^{t/\alpha}z\big) M_{\Gamma}(y)\hM_{\Gamma}(y)\ud y\,\eta(\od z) \\ &= \int_{\Gamma_1} e^{t(d+\beta+\hbeta)/\alpha} \rho_{e^t} \big( x,e^{t/\alpha} z\big)\,\eta(\od z) \\ &= \int_{\Gamma_1} \rho_1 \big( e^{-t/\alpha}x,z \big)\,\eta(\od z) \\ &= \int_{\Gamma_1} \rho_1(x,z)\widetilde{\eta}(\od z)\end{aligned}
	\end{align*}
	for some sub-probability measure $\widetilde{\eta}$ concentrated on $\Gamma_1$. It follows that $L_t F \subseteq F$. Since $F$ is clearly a convex set, one can follow steps in \cite[proof of Theorem 3.4]{un_BKLP23} and conclude that there is a density $\vphi \in \overline{F}$ satisfying $L_t \vphi = \vphi$. The uniqueness of $\vphi$ follows then from the strict positivity of $\ell_t$ and the fact that the operators $L_t$ commute, see \cite[proof of Theorem 3.2]{BJKP23_jfa}. 
\end{proof}
We now concentrate of the stationary density $\vphi$ and its properties. First, by Kulik and Scheutzov \cite[Theorem 1 and Remark 2]{KulikScheutzow15}, it is the limit of transition probabilities $\ell_t$ in the following sense: for every $x \in \Gamma$,
\begin{equation}\label{eq:25}
\lim_{t \to \infty} \int_{\Gamma} |\ell_t(x,y)-\vphi(y)| M_{\Gamma}(y)\hM_{\Gamma}(y)\ud y = 0.
\end{equation}
Taking \eqref{eq:26} into account, we intend to re-phrase the limit \eqref{eq:25} by replacing $\ell_t(x,y)$ with $\rho_1(x,y)$ and  $t \to \infty$ with $\Gamma \ni x \to 0$. First we observe that the convergence \eqref{eq:25} is in fact uniform in $x$ in every bounded subset $A \subseteq \Gamma$. Indeed, for some fixed $x_0 \in A$, by \eqref{eq:ell_Ch-K}, \eqref{eq:26} and \eqref{eq:rho_est}, we have
\begin{align}\label{eq:27}\begin{aligned}
	&\int_{\Gamma} |\ell_{t+1}(x,y)-\vphi(y)|M_{\Gamma}(y)\hM_{\Gamma}(y)\ud y \\ = &\int_{\Gamma} \bigg| \int_{\Gamma} \ell_1(x,z) \big( \ell_t(z,y)-\vphi(y) \big) M_{\Gamma}(z)\hM_{\Gamma}(z)\ud z\bigg| M_{\Gamma}(y)\hM_{\Gamma}(y)\ud y \\ \leq &\int_{\Gamma} c\ell_1(x_0,z) \int_{\Gamma} |\ell_t(z,y)-\vphi(y)| M_{\Gamma}(y)\hM_{\Gamma}(y)\ud y \, M_{\Gamma}(z)\hM_{\Gamma}(z)\ud z.\end{aligned}
\end{align}
By \eqref{eq:25}, for every $z \in \Gamma$, the inner integral converges to $0$ as $t \to \infty$. Using \eqref{eq:ell_density} and Theorem \ref{thm:s_density}, we also see that it is uniformly bounded by $2$. Thus, by the dominated convergence theorem, the iterated integral \eqref{eq:27} converges to $0$ as $t \to \infty$, and the claim follows. Put differently,
\begin{equation}\label{eq:28}
\lim_{t \to \infty} \int_{\Gamma} \Big|\rho_{1-e^{-t}}  \big(e^{-t/\alpha}x,y\big) - \vphi(y) \Big| M_{\Gamma}(y) \hM_{\Gamma}(y)\ud y = 0
\end{equation}
uniformly in $x \in A$ on every bounded subset $A \subseteq \Gamma$.

We now derive a desired reformulation of \eqref{eq:25}.
\begin{corollary}\label{cor:rho_L1_conv}
	Let $\Gamma$ be a $\kappa$-fat cone. Then
	\[
	\lim_{\Gamma \ni x \to 0} \int_{\Gamma} |\rho_1(x,y)-\vphi(y)|M_{\Gamma}(y)\hM_{\Gamma}(y)\ud y=0.
	\]
\end{corollary}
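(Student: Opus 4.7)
The plan is to combine the uniform convergence \eqref{eq:28} with the scaling \eqref{eq:rho_scaling2} and the standard $L^1$-continuity of dilations. For $x \in \Gamma$ with $0 < |x| < 1$, set $\hat{x} = x/|x| \in \Sd \cap \Gamma$ and $t_x = -\alpha \log |x|$. Then $e^{-t_x/\alpha}\hat{x} = x$, whence, by the definition \eqref{eq:26}, $\rho_{1-|x|^\alpha}(x,y) = \ell_{t_x}(\hat{x},y)$. As $x \to 0$, we have $t_x \to \infty$ while $\hat{x}$ remains in the compact subset $\Sd \cap \Gamma$ of $\Gamma$, so \eqref{eq:28} yields
\[
\int_\Gamma \big|\rho_{1-|x|^\alpha}(x,y) - \vphi(y)\big| M_\Gamma(y)\hM_\Gamma(y)\,\od y \longrightarrow 0 \quad \text{as } x \to 0.
\]

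Next, applying the scaling identity \eqref{eq:rho_scaling2} with $s=1$ and $t=1-|x|^\alpha$, and letting $\eta := (1-|x|^\alpha)^{-1/\alpha} \to 1^+$, one obtains $\rho_{1-|x|^\alpha}(x,y) = \eta^{d+\beta+\hbeta}\rho_1(\eta x,\eta y)$. Substituting this into the display above and changing variables $y\mapsto y/\eta$, using the homogeneities $M_\Gamma(y/\eta)\hM_\Gamma(y/\eta) = \eta^{-\beta-\hbeta}M_\Gamma(y)\hM_\Gamma(y)$ from Lemma \ref{lem:M}, the limit becomes
\[
\int_\Gamma \big|\rho_1(\eta x,y) - \eta^{-(d+\beta+\hbeta)}\vphi(y/\eta)\big| M_\Gamma(y)\hM_\Gamma(y)\,\od y \longrightarrow 0 \quad \text{as } x \to 0.
\]
The map $x \mapsto \eta x$ is a homeomorphism between $\{x \in \Gamma : 0<|x|<1\}$ and $\Gamma \setminus \{0\}$ preserving the direction, and in particular $\tilde{x} := \eta x \to 0$ iff $x \to 0$. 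By the triangle inequality, it therefore suffices to establish that $\|D_\eta \vphi - \vphi\|_{L^1(M_\Gamma \hM_\Gamma \,\od y)} \to 0$ as $\eta \to 1^+$, where $D_\eta f(y) := \eta^{-(d+\beta+\hbeta)} f(y/\eta)$.

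This last step is the main technical point of the argument. The homogeneities of $M_\Gamma$ and $\hM_\Gamma$ make $D_\eta$ an $L^1(M_\Gamma \hM_\Gamma \,\od y)$-isometry; in polar coordinates, $D_\eta$ is identified with the translation by $\log\eta$ in the logarithmic radial variable. Since $\vphi \in L^1(M_\Gamma \hM_\Gamma \,\od y)$ (being a density), the classical $L^1$-continuity of translations, together with the density of continuous compactly supported test functions in this weighted $L^1$ space, delivers the desired convergence and completes the proof.
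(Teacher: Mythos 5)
Your proof is correct and takes essentially the same route as the paper's: invoke the uniform convergence \eqref{eq:28}, use the scaling \eqref{eq:rho_scaling2} with a change of variables and the homogeneity of $M_\Gamma,\hM_\Gamma$, and reduce to $L^1$-continuity of dilations. Your parametrization ($t_x=-\alpha\log|x|$, $\eta=(1-|x|^\alpha)^{-1/\alpha}$, $\tilde x=\eta x$) differs cosmetically from the paper's ($t=\ln(1+|x|^{-\alpha})$, $z=x/|x|$), but the resulting comparison and the final dilation-continuity step are the same idea. Two small remarks: (i) $\Sd\cap\Gamma$ is bounded but generally not compact in $\Gamma$ (it is open in $\Sd$); fortunately \eqref{eq:28} only requires $A$ bounded, so your application is fine as stated. (ii) Your last paragraph, showing $\|D_\eta\vphi-\vphi\|_{L^1(M_\Gamma\hM_\Gamma\,dy)}\to 0$ via the logarithmic-radial change of variable that turns $D_\eta$ into a translation, is a correct and slightly more explicit rendering of what the paper summarizes as ``continuity of dilations in $L^1(\Rd)$'' (one can also see it directly by writing $g=\vphi M_\Gamma\hM_\Gamma\in L^1(\Rd)$ and noting $D_\eta\vphi\cdot M_\Gamma\hM_\Gamma=\eta^{-d}g(\cdot/\eta)$).
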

\begin{proof}
	Let $A$ be a bounded subset of $\Gamma$. The scaling property \eqref{eq:rho_scaling} yields that
	\[
	\rho_{1-e^{-t}}  \big(e^{-t/\alpha}z,u\big) = (1-e^{-t})^{-(d+\beta+\hbeta)/\alpha} \rho_1 \Big( (e^t-1)^{-1/\alpha}z,(1-e^{-t})^{-1/\alpha}u \Big), \quad z,u \in \Gamma.
	\] 
	Thus, \eqref{eq:28} together with \eqref{eq:ell_density} and the triangle inequality entail that
	\[
	\lim_{t \to \infty}\int_{\Gamma} \bigg| \rho_1 \Big( (e^t-1)^{-1/\alpha}z,(1-e^{-t})^{-1/\alpha}u \Big) - \vphi(u) \bigg| M_{\Gamma}(u)\hM_{\Gamma}(u) \ud u=0
	\]
	uniformly in $z \in A$. By the continuity of dilations in $L^1(\Rd)$, the change of variables $y=(1-e^{-t})^{-1/\alpha}u$, \eqref{eq:M_hom} with Remark \ref{rem:4}, and the triangle inequality,
	\[
	\lim_{t \to \infty}\int_{\Gamma} \bigg| \rho_1 \Big( (e^t-1)^{-1/\alpha}z,y \Big) - \vphi(y) \bigg| M_{\Gamma}(y)\hM_{\Gamma}(y) \ud y=0
	\]
	uniformly in $z \in A$. To conclude the proof, we take $A=\Sd$ and $x = (e^t-1)^{-1/\alpha}z$ with $t = \ln (1+|x|^{-\alpha})$ and $z = x/|x| \in A$.
\end{proof}
In fact, the stationary density $\vphi$ may be refined to a continuous function on $\Gamma$.
\begin{lemma}\label{lem:vphi_mod}
	After a modification on a set of Lebesgue measure zero, $\vphi$ is continuous on $\Gamma$ and
	\[
	\vphi(y) \approx (1+|y|)^{-d-\alpha} \frac{\hP_y(\tau_{\Gamma}>1)}{\hM_{\Gamma}(y)}, \quad y \in \Gamma.
	\] 
\end{lemma}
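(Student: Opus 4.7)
The plan is to show that $\tilde\vphi := L_1\vphi$ provides a continuous modification of $\vphi$ on $\Gamma$, and then to transfer the pointwise limit of $\rho_1(x_n,\cdot)$ as $x_n\to 0$ from Corollary~\ref{cor:rho_L1_conv} into a two-sided comparison that extends from a.e.\ to everywhere by continuity. Define $\tilde\vphi(y) := L_1\vphi(y)$ for every $y \in \Gamma$. By Theorem~\ref{thm:s_density} the identity $L_1\vphi=\vphi$ holds in $L^1(M_\Gamma\hM_\Gamma\,\ud y)$, so $\tilde\vphi=\vphi$ almost everywhere. For continuity of $\tilde\vphi$ at a point $y_0 \in \Gamma$, fix a compact neighbourhood $K\subset\Gamma$; the integrand
\[
y \mapsto \rho_{1-e^{-1}}(e^{-1/\alpha}x, y)\,\vphi(x)M_\Gamma(x)\hM_\Gamma(x)
\]
is continuous in $y \in \Gamma$ because $p_{1-e^{-1}}^\Gamma$ is jointly continuous on $\Gamma\times\Gamma$ and $\hM_\Gamma$ is continuous and strictly positive on $\Gamma$ (Lemma~\ref{lem:M}). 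Combining the scaling \eqref{eq:rho_scaling2} with \eqref{eq:rho_est3} applied on the compact set $(1-e^{-1})^{-1/\alpha}K\subset\Gamma$ produces a constant $C_K$ with
\[
\rho_{1-e^{-1}}(e^{-1/\alpha}x, y) \leq C_K(1+|x|)^{-d-\beta} \leq C_K, \quad y \in K,\, x \in \Gamma,
\]
so that the dominant $C_K\vphi(x)M_\Gamma(x)\hM_\Gamma(x)$ is integrable and dominated convergence yields the continuity of $\tilde\vphi$ on $\Gamma$.

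For the comparison, use Corollary~\ref{cor:rho_L1_conv} to select a sequence $x_n \to 0$ in $\Gamma$ along which $\rho_1(x_n, y) \to \vphi(y)$ for almost every $y \in \Gamma$. For $n$ sufficiently large one has $x_n \in \Gamma_1$, and \eqref{eq:rho_est} with $R=1$ yields
\[
\rho_1(x_n, y) \approx (1+|y|)^{-d-\alpha}\frac{\hP_y(\tau_\Gamma>1)}{\hM_\Gamma(y)}, \quad y \in \Gamma,
\]
with comparability constants independent of $n$. Passing to the limit along the subsequence gives the same comparison for $\vphi(y)$, and therefore for $\tilde\vphi(y)$, at almost every $y \in \Gamma$.

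It remains to upgrade this a.e.\ comparison to a pointwise one. The right-hand side is continuous on $\Gamma$: $\hM_\Gamma$ is continuous and positive by Lemma~\ref{lem:M} and Remark~\ref{rem:4}, while $y\mapsto \hP_y(\tau_\Gamma>1)=\int_\Gamma \widehat p_1^\Gamma(y,z)\ud z$ is continuous via dominated convergence using the joint continuity of $\widehat p_1^\Gamma$ and the uniform bound $\widehat p_1^\Gamma(y,z) \leq \widehat p_1(y,z) \lesssim (1+|y-z|)^{-d-\alpha}$. Two continuous functions on $\Gamma$ that are comparable up to a two-sided multiplicative constant outside a Lebesgue-null set must be comparable everywhere, which completes the proof. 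The main obstacle is securing the uniform-in-$y$ bound on $\ell_1(x,y)$ over compact subsets of $\Gamma$; this rests on the scaling invariance of $\Gamma$ together with \eqref{eq:rho_est3}, which itself is a direct consequence of the factorisation in Theorem~\ref{thm:dhk} and Proposition~\ref{prop:M_surv}.
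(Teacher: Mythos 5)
Your proof is correct and follows essentially the same route as the paper: take $L_1\vphi$ as the continuous modification, prove its continuity by dominated convergence using a locally uniform (in $y$) bound on $\ell_1(x,y)$, read off the a.e.\ comparison from Corollary~\ref{cor:rho_L1_conv} together with \eqref{eq:rho_est}, and upgrade to an everywhere comparison by continuity. The one genuine simplification is your choice of dominant: after invoking \eqref{eq:rho_scaling2} and \eqref{eq:rho_est3} you settle for the crude bound $\ell_1(x,y)\leq C_K$ on compacts, so the integrable majorant is simply $C_K\,\vphi(x)M_\Gamma(x)\hM_\Gamma(x)$, whereas the paper derives the sharper bound $\ell_1(x,y)\lesssim (1+|x|)^{-d-\alpha}\P_x(\tau_\Gamma>1)/M_\Gamma(x)$ for $y\in\Gamma_R$; both suffice. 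You also spell out two steps the paper leaves implicit, namely the a.e.-convergent subsequence extraction from the $L^1$ convergence (which requires noting that $M_\Gamma\hM_\Gamma\,\ud y$ and Lebesgue measure have the same null sets on $\Gamma$, as you tacitly use) and the continuity of the right-hand side $y\mapsto(1+|y|)^{-d-\alpha}\hP_y(\tau_\Gamma>1)/\hM_\Gamma(y)$.
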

\begin{proof}
	First we note that
	\[
	\vphi(y) \approx (1+|y|)^{-d-\alpha} \frac{\hP_y(\tau_{\Gamma}>1)}{\hM_{\Gamma}(y)} \quad {\rm a.e.}\, y \in \Gamma,
	\] 
	by virtue of Corollary \ref{cor:rho_L1_conv} and \eqref{eq:rho_est}. Moreover, by Theorem \ref{thm:s_density}, $\vphi(y)=L_1 \vphi(y)$ a.e. $y \in \Gamma$. Hence, it is enough to prove that $L_1 \vphi$ is continuous on $\Gamma$.
	
	By \eqref{eq:rho_est} and \eqref{eq:26}, we have
	\[
	\ell_1(x,y) \approx \frac{\P_{e^{-1/\alpha}x}\big(\tau_{\Gamma}>1-e^{-1}\big)}{M_{\Gamma} \big(e^{-1/\alpha}x\big)} p_{1-e^{-1}} \big( e^{-1/\alpha}x,y \big) \frac{\hP_y(\tau_{\Gamma}>1)}{\hM_{\Gamma}(y)}, \quad x,y \in \Gamma.
	\]
	By \eqref{eq:surv_scaling} and Remark \ref{rem:2}, 
	\[
	\P_{e^{-1/\alpha}x}\big(\tau_{\Gamma}>1-e^{-1}\big) \approx \P_x(\tau_{\Gamma}>e-1) \approx \P_x(\tau_{\Gamma}>1).
	\]
	Therefore, if we let $R>1$, then by Proposition \ref{prop:M_surv}, homogeneity of $M_{\Gamma}$ \eqref{eq:M_hom} and \eqref{eq:11} with \eqref{eq:12},
	\[
	\ell_1(x,y) \lesssim (1+|x|)^{-d-\alpha} \frac{\P_x(\tau_{\Gamma}>1)}{M_{\Gamma}(x)}, \quad x \in \Gamma, y \in \Gamma_R.
	\]
	Here the implied constant may depend on $R$. The dominated convergence theorem yields the continuity of $L_1\vphi$ on $\Gamma_R$, and by arbitrary choice of $R$ we conclude the proof.
\end{proof}
We are now able to assert that there is a finite limit of $\rho_t(x,y)$ as $\Gamma \ni x \to 0$, expressed by means of the stationary density $\vphi$. 
\begin{theorem}\label{thm:rho_conv}
	Let $\Gamma$ be a $\kappa$-fat cone. For every $t>0$, uniformly in $y \in \Gamma$,
	\[
	\lim_{\Gamma \ni x \to 0} \rho_t(x,y) = t^{-(d+\beta+\hbeta)/\alpha}\vphi \big( t^{-1/\alpha}y \big).
	\]
\end{theorem}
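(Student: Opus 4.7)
The plan is to first reduce to $t = 1$ via scaling and then bootstrap the $L^1$-convergence of Corollary \ref{cor:rho_L1_conv} to uniform convergence. The scaling identity \eqref{eq:rho_scaling2} and the bijection $y \mapsto t^{-1/\alpha}y$ of $\Gamma$ yield
\[
\rho_t(x, y) - t^{-(d+\beta+\hbeta)/\alpha}\vphi(t^{-1/\alpha}y) = t^{-(d+\beta+\hbeta)/\alpha}\bigl[\rho_1(t^{-1/\alpha}x, t^{-1/\alpha}y) - \vphi(t^{-1/\alpha}y)\bigr],
\]
so it suffices to prove that $\rho_1(x, y) \to \vphi(y)$ uniformly in $y \in \Gamma$ as $\Gamma \ni x \to 0$. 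Fix $\epsilon > 0$ and split $\Gamma$ into $\Gamma_R$ and $\Gamma \setminus \Gamma_R$ for a large $R$ to be chosen.

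On the tail $\{|y| > R\}$, combine \eqref{eq:rho_est2} with Proposition \ref{prop:M_surv}\eqref{prop:M_surv_2}\eqref{prop:M_surv_3} applied to $\hbfX$ and Lemma \ref{lem:vphi_mod} (using $\hP_y(\tau_\Gamma > 1) \lesssim (1 + |y|^{\alpha-\hbeta})\hM_\Gamma(y)$ to simplify the expression for $\vphi$) to obtain
\[
\rho_1(x, y) + \vphi(y) \lesssim (1+|y|)^{-d-\hbeta}, \qquad x \in \Gamma_1, \ y \in \Gamma,
\]
so that the left-hand side is smaller than $\epsilon$ for all $|y| > R$ and all $x \in \Gamma_1$ once $R$ is large. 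For the compact range $y \in \Gamma_R$, use Chapman--Kolmogorov \eqref{eq:rho_CH-K} at $1 = 1/2 + 1/2$ together with the substitution $w = 2^{-1/\alpha}z$ and scaling \eqref{eq:rho_scaling2} to obtain
\[
\rho_1(x, y) = \int_\Gamma \rho_1(2^{1/\alpha}x, z)\, \rho_{1/2}(2^{-1/\alpha}z, y)\, M_\Gamma(z)\hM_\Gamma(z) \ud z.
\]
The stationarity $L_{\ln 2}\vphi = \vphi$ from Theorem \ref{thm:s_density}, unpacked through \eqref{eq:26} and \eqref{eq:Lt} with the choice $t = \ln 2$ (so that $1-e^{-t} = 1/2$ and $e^{-t/\alpha} = 2^{-1/\alpha}$), yields the matching representation
\[
\vphi(y) = \int_\Gamma \vphi(z)\, \rho_{1/2}(2^{-1/\alpha}z, y)\, M_\Gamma(z)\hM_\Gamma(z) \ud z.
\]
Subtracting and rewriting $\rho_{1/2}(2^{-1/\alpha}z, y) = 2^{(d+\beta+\hbeta)/\alpha}\rho_1(z, 2^{1/\alpha}y)$ via scaling, estimate \eqref{eq:rho_est3} applied at the parameter $2^{1/\alpha}R$ gives the uniform bound $\rho_{1/2}(2^{-1/\alpha}z, y) \leq c_R(1+|z|)^{-d-\beta} \leq c_R$ valid for all $y \in \Gamma_R$ and $z \in \Gamma$. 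Consequently
\[
\sup_{y \in \Gamma_R}|\rho_1(x, y) - \vphi(y)| \leq c_R \int_\Gamma |\rho_1(2^{1/\alpha}x, z) - \vphi(z)|\, M_\Gamma(z)\hM_\Gamma(z) \ud z,
\]
which vanishes as $x \to 0$ by Corollary \ref{cor:rho_L1_conv}.

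The main obstacle is locating the Chapman--Kolmogorov split at $1/2$ together with the accompanying rescaling that aligns $\rho_1(x, y)$ and $\vphi(y)$ against the \emph{same} kernel $z \mapsto \rho_{1/2}(2^{-1/\alpha}z, y)$; without this synchronisation the $L^1$-convergence from Corollary \ref{cor:rho_L1_conv} does not translate into uniform convergence in $y$. Once the identity is in place, uniform smallness on the tail (via Proposition \ref{prop:M_surv} and Lemma \ref{lem:vphi_mod}) combined with the $y$-uniform boundedness of the kernel on $\Gamma_R$ (via \eqref{eq:rho_est3}) closes the argument.
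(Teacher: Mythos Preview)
Your proof is correct and follows essentially the same route as the paper: reduce to $t=1$ by scaling, use the Chapman--Kolmogorov split at $1=\tfrac12+\tfrac12$ together with scaling and the identity $L_{\ln 2}\vphi=\vphi$ to express both $\rho_1(x,y)$ and $\vphi(y)$ as integrals against the common kernel $z\mapsto\rho_{1/2}(2^{-1/\alpha}z,y)$, then split into a tail region and a compact region, invoking \eqref{eq:rho_est3} with Corollary~\ref{cor:rho_L1_conv} on the latter.

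The one genuine difference lies in how you handle the tail $\{|y|>R\}$. The paper first proves the auxiliary estimate \eqref{eq:29}, bounding the \emph{integral} $\int_\Gamma|\rho_1(2^{1/\alpha}x,z)-\vphi(z)|\rho_1(z,2^{1/\alpha}y)M_\Gamma(z)\hM_\Gamma(z)\ud z\lesssim(1+|y|)^{-\hbeta}$, which only yields decay when $\hbeta>0$ and therefore forces a separate treatment of the case $\hbeta=0$ (handled via Remark~\ref{rem:6}). You instead bound $\rho_1(x,y)$ and $\vphi(y)$ individually by $(1+|y|)^{-d-\hbeta}$ using \eqref{eq:rho_est2} and Lemma~\ref{lem:vphi_mod} with Proposition~\ref{prop:M_surv}. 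Since $d\geq 1$, this decays regardless of whether $\hbeta>0$, so your argument is slightly cleaner and dispenses with both the technical lemma \eqref{eq:29} and the case distinction.
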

\begin{remark}\label{rem:6}
	By Remark \ref{rem:5}\eqref{rem:5_2}, $\beta = 0$ if and only if $\hbeta=0$, and then one has $M_{\Gamma} = \hM_{\Gamma} =\ind_{\Gamma}$. Consequently, $\rho_t(x,y) = p_t(x,y)$ and using the Chapman-Kolmogorov property \eqref{eq:dhk_CH-K} together with the scalings \eqref{eq:dhk_scaling2} one can verify that $\vphi(y) = p_1(0,y)$ is the stationary density from Theorem \ref{thm:s_density}. Therefore, the statement of Theorem \ref{thm:rho_conv} trivialises to the continuity property of the heat kernel.
\end{remark}
\begin{proof}[Proof of Theorem \ref{thm:rho_conv}]
	By Remark \ref{rem:6}, we may assume that $\beta>0$ and $\hbeta>0$. We first claim that there is a constant $c \in (0,\infty)$ dependent at most on $\alpha$, $d$, $\Gamma$ and $\lambda$, such that for all $x \in \Gamma_1$ and $y \in \Gamma$,
	\begin{equation}\label{eq:29}
	\int_{\Gamma} \big| \rho_1 \big( 2^{1/\alpha}x,z \big) - \vphi(z) \big| \rho_1 \big( z,2^{1/\alpha}y \big)M_{\Gamma}(z)\hM_{\Gamma}(z)\ud z \leq c(1+|y|)^{-\hbeta}.
	\end{equation}
	To this end, denote $\tilde{y} = 2^{1/\alpha}y$, so that $|\tilde{y}| \approx |y|$. By \eqref{eq:rho_est}, Lemma \ref{lem:vphi_mod}, \eqref{eq:rho_f} and Proposition \ref{prop:M_surv}, for all $z \in \Gamma$ we have
	\[
	\big| \rho_1 \big( 2^{1/\alpha}x,z \big) - \vphi(z) \big| \rho_1 \big( z,\tilde{y} \big)M_{\Gamma}(z)\hM_{\Gamma}(z) \lesssim (1+|z|)^{-d-\alpha} (1+|z-\tilde{y}|)^{-d-\alpha} (1+|y|)^{\alpha-\hbeta}.
	\]
	We split the integral \eqref{eq:29} as follows. For $z \in A:=B(\tilde{y},|\tilde{y}|/2)$ we have $|z| \approx |\tilde{y}|\approx |y|$ and $1+|z-\tilde{y}| \geq 1$, which implies
	\[
	\int_A \big| \rho_1 \big( 2^{1/\alpha}x,z \big) - \vphi(z) \big| \rho_1 \big( z,2^{1/\alpha}y \big)M_{\Gamma}(z)\hM_{\Gamma}(z)\ud z \lesssim |y|^d(1+|y|)^{-d-\hbeta} \leq (1+|y|)^{-\hbeta}.
	\]
	On $\Gamma \setminus A$ we use the fact that $1+|z| \geq 1$ to get
	\begin{align*}
		\int_{\Gamma \setminus A} \big| \rho_1 \big( 2^{1/\alpha}x,z \big) - \vphi(z) \big| \rho_1 \big( z,2^{1/\alpha}y \big)M_{\Gamma}(z)\hM_{\Gamma}(z)\ud z &\lesssim (1+|y|)^{\alpha -\hbeta} \int_{\Gamma \setminus A} (1+|z-\tilde{y}|)^{-d-\alpha} \ud z \\ &\lesssim (1+|y|)^{\alpha -\hbeta} \int_{|\tilde{y}|/2}^{\infty} (1+r)^{-1-\alpha}\ud r \\ &\lesssim (1+|y|)^{-\hbeta}.
	\end{align*}
	Thus, we obtain \eqref{eq:29} as claimed.
	
	Next, we prove that, uniformly in $y \in \Gamma$,
	\begin{equation}\label{eq:32}
	\lim_{\Gamma \ni x \to 0} \int_{\Gamma} \rho_1 \big( 2^{1/\alpha}x,z \big) \rho_1 \big( z,2^{1/\alpha}y \big) M_{\Gamma}(z)\hM_{\Gamma}(z)\ud z = \int_{\Gamma} \vphi(z) \rho_1 \big( z,2^{1/\alpha}y \big) M_{\Gamma}(z)\hM_{\Gamma}(z)\ud z.
	\end{equation}
	Indeed, let $\epsilon>0$. Since $\hbeta>0$, by \eqref{eq:29}, there is $R>0$ depending at most on $\alpha,d,\Gamma,\lambda$ and $\epsilon$ such that
	\begin{equation}\label{eq:31}
	\int_{\Gamma} \big| \rho_1 \big( 2^{1/\alpha}x,z \big) - \vphi(z) \big| \rho_1 \big( z,2^{1/\alpha}y \big)M_{\Gamma}(z)\hM_{\Gamma}(z)\ud z < \epsilon,
	\end{equation}
	if only $y \in \Gamma \setminus \Gamma_R$. If $y \in \Gamma_R$, then by \eqref{eq:rho_est3} and Corollary \ref{cor:rho_L1_conv},
	\begin{align}\label{eq:30}\begin{aligned}
	&\int_{\Gamma} \big| \rho_1 \big( 2^{1/\alpha}x,z \big) - \vphi(z) \big| \rho_1 \big( z,2^{1/\alpha}y \big)M_{\Gamma}(z)\hM_{\Gamma}(z)\ud z \\ \lesssim &\int_{\Gamma} \big| \rho_1 \big( 2^{1/\alpha}x,z \big) - \vphi(z) \big| M_{\Gamma}(z)\hM_{\Gamma}(z)\ud z \\ < &\epsilon,\end{aligned}
	\end{align}
	for all $y \in \Gamma_R$ and $x \in \Gamma_1$ small enough. Note that the implied constant in \eqref{eq:30} does not depend on $y$. Putting together \eqref{eq:31} with \eqref{eq:30}, we conclude \eqref{eq:32}, as desired. 
	
	For the final part we note that by the scalings \eqref{eq:rho_scaling2} and the Chapman-Kolmogorov property \eqref{eq:rho_CH-K},
	\begin{align*}
		\rho_1(x,y) &= 2^{(d+\beta+\beta)/\alpha} \rho_2 \big( 2^{1/\alpha}x,2^{1/\alpha}y \big) \\ &= 2^{(d+\beta+\beta)/\alpha} \int_{\Gamma} \rho_1 \big( 2^{1/\alpha}x,z \big) \rho_1 \big( z,2^{1/\alpha}y \big) M_{\Gamma}(z)\hM_{\Gamma}(z)\ud z.
	\end{align*}
	It follows now from \eqref{eq:32}, \eqref{eq:rho_scaling2} and Theorem \ref{thm:s_density} that
	\begin{align*}
		\lim_{\Gamma \ni x \to 0}\rho_1(x,y) &= 2^{(d+\beta+\hbeta)/\alpha} \int_{\Gamma} \vphi(z)\rho_1 \big( z,2^{1/\alpha}y \big) M_{\Gamma}(z)\hM_{\Gamma}(z)\ud z \\ &= \int_{\Gamma} \vphi(z)\rho_{1/2} \big( 2^{-1/\alpha}z,y \big) M_{\Gamma}(z)\hM_{\Gamma}(z)\ud z \\ &= L_{\ln 2} \vphi(y) \\ &= \vphi(y).
	\end{align*}
\end{proof}
\begin{remark}
	Recall that for every $x \in \Gamma$ and $t > 0$,
	\[
	\int_{\Gamma} \rho_t(x,y)M_{\Gamma}(y)\hM_{\Gamma}(y)\ud y = 1.
	\]
	By \eqref{eq:rho_est}, Theorem \ref{thm:rho_conv} and the dominated convergence  theorem, we also have
	\[
	\int_{\Gamma} \vphi(y)M_{\Gamma}(y)\hM_{\Gamma}(y)\ud y = 1.
	\]
\end{remark}
With Theorem \ref{thm:rho_conv} at hand, the following corollary readily follows. Recall that the comparability of the survival probability and the Martin kernel was given in Proposition \ref{prop:M_surv}. Now we focus on the limiting behaviour as $\Gamma \ni x \to 0$ and derive the aforementioned asymptotic relation.
\begin{corollary}\label{cor:surv_M_as}
	Let $\Gamma$ be a $\kappa$-fat cone. Then, for every $t>0$,
	\[
	\lim_{\Gamma \ni x \to 0} \frac{\P_x (\tau_{\Gamma}>t)}{M_{\Gamma}(x)} = C_1 t^{-\beta/\alpha}, \qquad \text{where} \qquad C_1 = \int_{\Gamma} \vphi(z)\hM_{\Gamma}(z)\ud z \in (0,\infty).
	\]
\end{corollary}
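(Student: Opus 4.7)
The plan is to express $\P_x(\tau_\Gamma > t)/M_\Gamma(x)$ as an integral of $\rho_t(x,\cdot)$ against $\hM_\Gamma$ and then pass to the limit under the integral. From \eqref{eq:surv_prob} and the definition \eqref{eq:rho_def},
\[
\frac{\P_x(\tau_\Gamma>t)}{M_\Gamma(x)} = \int_\Gamma \rho_t(x,y)\hM_\Gamma(y)\,\od y,
\]
and Theorem \ref{thm:rho_conv} supplies the pointwise (indeed uniform-in-$y$) convergence $\rho_t(x,y) \to t^{-(d+\beta+\hbeta)/\alpha}\vphi(t^{-1/\alpha}y)$ as $\Gamma \ni x \to 0$. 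Once the interchange of limit and integral is justified, the change of variables $z = t^{-1/\alpha}y$ together with the homogeneity of $\hM_\Gamma$ of order $\hbeta$ (Lemma \ref{lem:M} applied to $\hbfX$, cf.\ Remark \ref{rem:4}) produces the Jacobian factor $t^{d/\alpha}$ and the scaling factor $t^{\hbeta/\alpha}$, whose product with $t^{-(d+\beta+\hbeta)/\alpha}$ leaves precisely $t^{-\beta/\alpha}\int_\Gamma \vphi(z)\hM_\Gamma(z)\,\od z$.

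To justify the interchange I need an $x$-uniform integrable majorant for $\rho_t(x,y)\hM_\Gamma(y)$. The factorisation \eqref{eq:rho_f} combined with Proposition \ref{prop:M_surv}\eqref{prop:M_surv_1} (applied with $R=1$ to any $x \in \Gamma_{t^{1/\alpha}}$) controls the ratio $\P_x(\tau_\Gamma>t)/M_\Gamma(x)$ uniformly, so that
\[
\rho_t(x,y)\hM_\Gamma(y) \lesssim p_t(x,y)\hP_y(\tau_\Gamma>t) \leq p_t(x,y).
\]
Fixing a small $\epsilon>0$ and restricting to $x \in \Gamma_\epsilon$, the scaling \eqref{eq:hk_scaling} together with \eqref{eq:11} bounds $p_t(x,y)$ by a constant on the compact set $\{|y|<2\epsilon\}$ and by a multiple of $(t^{1/\alpha}+|y|)^{-d-\alpha}$ on $\{|y|\geq 2\epsilon\}$; both bounds are integrable in $y$ and depend only on $t$ and $\epsilon$, so the dominated convergence theorem applies and yields the limit as described.

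Finally, $C_1 \in (0,\infty)$ follows from Lemma \ref{lem:vphi_mod}: the comparability $\vphi(z)\hM_\Gamma(z) \approx (1+|z|)^{-d-\alpha}\hP_z(\tau_\Gamma>1)$ provides the integrable upper bound $(1+|z|)^{-d-\alpha}$ and strict positivity on $\Gamma$. The substantive input---the existence of the limit of $\rho_t(x,y)$---is already in place via Theorems \ref{thm:rho_conv} and \ref{thm:s_density}, so the only mildly delicate step is arranging the domination uniformly as $x \to 0$, which is handled by the splitting of $\Gamma$ described above.
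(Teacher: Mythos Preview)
Your proof is correct and follows essentially the same approach as the paper: write $\P_x(\tau_\Gamma>t)/M_\Gamma(x)=\int_\Gamma \rho_t(x,y)\hM_\Gamma(y)\,\od y$, apply Theorem~\ref{thm:rho_conv} for the pointwise limit, and pass to the limit by dominated convergence. The paper streamlines the argument slightly by first treating $t=1$ and invoking \eqref{eq:rho_est} directly (which already gives the $x$-uniform majorant $(1+|y|)^{-d-\alpha}$ without any splitting), and then obtaining general $t$ from the scalings \eqref{eq:surv_scaling} and \eqref{eq:Mk_hom}.
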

\begin{proof}
	First let $t=1$. By \eqref{eq:surv_prob}, 
	\[
	\frac{\P_x (\tau_{\Gamma}>1)}{M_{\Gamma}(x)} = \int_{\Gamma} \frac{p_1^{\Gamma}(x,y)}{M_{\Gamma}(x)}\ud y = \int_{\Gamma} \rho_1(x,y)\hM_{\Gamma}(y)\ud y.
	\]
	We let $\Gamma \ni x \to 0$ and use \eqref{eq:rho_est}, the dominated convergence theorem and Theorem \ref{thm:rho_conv} to get the claim. For the general case we use the scalings \eqref{eq:surv_scaling} and \eqref{eq:Mk_hom}.
\end{proof}
We are now ready to prove our second main result, c.f. Theorem \ref{thm:Yaglom}. Note that the limiting Yaglom measure is absolutely continuous with respect to the Lebesgue measure, with the density given by objects pertaining to both $\bfX$ and $\hbfX$.
\begin{theorem}\label{thm:Y2}
	Let $\bfX$ be a strictly stable L\'{e}vy process satisfying {\bf A}. Assume $\Gamma$ is a $\kappa$-fat cone. There is a probability measure $\mu$ concentrated on $\Gamma$ such that for every Borel set $A \subseteq \Gamma$ and every probability measure $\gamma$ on $\Gamma$ satisfying $\int_{\Gamma}(1+|y|)^{\alpha}\,\gamma(\od y)<\infty$,
	\begin{equation*}
		\lim_{t \to \infty} \P_{\gamma} \big( t^{-1/\alpha}X_t \in A \big| \tau_{\Gamma}>t \big) = \mu(A), \quad x \in \Gamma,
	\end{equation*}
	where
	\[
	\mu(A) = \frac{1}{C_1} \int_A \vphi(y)\hM_{\Gamma}(y)\ud y, \quad A \subseteq \Gamma.
	\]
\end{theorem}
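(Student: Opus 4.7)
The plan is to reduce the statement to the case of a Dirac initial distribution and then integrate against $\gamma$ via dominated convergence. Write
\begin{equation*}
\P_\gamma\big(t^{-1/\alpha}X_t \in A \,\big|\, \tau_\Gamma > t\big) = \frac{N(t)}{D(t)},
\end{equation*}
where $N(t) = \int_\Gamma \P_y\big(t^{-1/\alpha}X_t \in A,\,\tau_\Gamma > t\big)\,\gamma(\od y)$ and $D(t) = \int_\Gamma \P_y(\tau_\Gamma > t)\,\gamma(\od y)$. The scaling \eqref{eq:dhk_scaling} together with the cone invariance $t^{-1/\alpha}\Gamma = \Gamma$ yields, after the substitution $w = t^{-1/\alpha}z$, the identity $\P_y\big(t^{-1/\alpha}X_t \in A,\,\tau_\Gamma > t\big) = \int_A p_1^\Gamma(t^{-1/\alpha}y, w)\,\od w$. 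Using the definition \eqref{eq:rho_def} of $\rho_1$ and the homogeneity \eqref{eq:Mk_hom} of $M_\Gamma$ this rewrites as
\begin{equation*}
\frac{t^{\beta/\alpha}\,\P_y\big(t^{-1/\alpha}X_t \in A,\,\tau_\Gamma > t\big)}{M_\Gamma(y)} = \int_A \rho_1(t^{-1/\alpha}y,\,w)\,\hM_\Gamma(w)\,\od w,
\end{equation*}
and analogously for $t^{\beta/\alpha}\P_y(\tau_\Gamma>t)/M_\Gamma(y)$ by taking $A = \Gamma$.

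Now fix $y \in \Gamma$ and set $x_t := t^{-1/\alpha}y$, so that $x_t \to 0$ along $\Gamma$ as $t \to \infty$. Theorem \ref{thm:rho_conv} yields $\rho_1(x_t, w) \to \vphi(w)$ pointwise in $w \in \Gamma$, and Corollary \ref{cor:surv_M_as} identifies the limit of the total masses
\begin{equation*}
\int_\Gamma \rho_1(x_t,\,w)\,\hM_\Gamma(w)\,\od w \;=\; \frac{\P_{x_t}(\tau_\Gamma > 1)}{M_\Gamma(x_t)} \;\xrightarrow[t\to\infty]{}\; C_1 \;=\; \int_\Gamma \vphi(w)\,\hM_\Gamma(w)\,\od w \in (0,\infty).
\end{equation*}
Scheff\'{e}'s theorem then promotes pointwise convergence to convergence in $L^1\big(\hM_\Gamma(w)\,\od w\big)$, whence for every Borel $A \subseteq \Gamma$,
\begin{equation*}
\int_A \rho_1(x_t,\,w)\,\hM_\Gamma(w)\,\od w \;\xrightarrow[t\to\infty]{}\; \int_A \vphi(w)\,\hM_\Gamma(w)\,\od w.
\end{equation*}
This settles the Dirac case $\gamma = \delta_y$.

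For arbitrary $\gamma$ satisfying the moment hypothesis, the passage to the limit under the integrals defining $N(t)$ and $D(t)$ is to be justified by Proposition \ref{prop:M_surv}\eqref{prop:M_surv_2}: for $t \geq 1$, since $\beta < \alpha$,
\begin{equation*}
\frac{t^{\beta/\alpha}\,\P_y(\tau_\Gamma > t)}{M_\Gamma(y)} \leq c_2\big(1 + t^{\beta/\alpha - 1}|y|^{\alpha-\beta}\big) \leq c_2\big(1 + |y|^{\alpha - \beta}\big).
\end{equation*}
Combined with the bound $M_\Gamma(y) \lesssim (1+|y|)^{\beta}$ coming from local boundedness and homogeneity of $M_\Gamma$, this produces $t^{\beta/\alpha}\P_y(\tau_\Gamma > t) \leq c\,(1+|y|)^{\alpha}$, an integrable majorant by the hypothesis $\int_\Gamma(1+|y|)^\alpha\,\gamma(\od y)<\infty$; the same bound dominates the numerator integrand. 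Applying the dominated convergence theorem and combining with the Dirac-case asymptotics gives
\begin{equation*}
t^{\beta/\alpha}D(t) \to C_1 \int_\Gamma M_\Gamma(y)\,\gamma(\od y), \qquad t^{\beta/\alpha}N(t) \to \bigg(\int_A \vphi(w)\,\hM_\Gamma(w)\,\od w\bigg)\int_\Gamma M_\Gamma(y)\,\gamma(\od y),
\end{equation*}
and passage to the quotient yields the stated limit $\mu(A) = C_1^{-1}\int_A \vphi(w)\,\hM_\Gamma(w)\,\od w$.

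The main obstacle is the Scheff\'{e} step: Theorem \ref{thm:rho_conv} alone supplies only pointwise (indeed uniform) convergence of $\rho_1(x_t,\cdot)$ on $\Gamma$, whereas $\mu(A)$ must capture the $\hM_\Gamma(w)\,\od w$-mass of the limit on arbitrary (possibly unbounded) Borel sets; against the unbounded weight $\hM_\Gamma$ uniform convergence is not enough. The bridge is precisely Corollary \ref{cor:surv_M_as}, which pins down the full-space $\hM_\Gamma\,\od w$-mass of $\rho_1(x_t,\cdot)$ as $C_1$ in the limit, letting Scheff\'{e} upgrade pointwise convergence to $L^1$ convergence. A secondary point is that the $(1+|y|)^\alpha$ moment assumption on $\gamma$ is exactly matched by Proposition \ref{prop:M_surv}\eqref{prop:M_surv_2} combined with the homogeneity of $M_\Gamma$, so there is no slack in the hypothesis.
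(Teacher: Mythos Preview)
Your proof is correct and follows essentially the same route as the paper. For the Dirac case the paper invokes the explicit majorant \eqref{eq:rho_est} directly in dominated convergence rather than passing through Scheff\'{e}, but your Scheff\'{e} argument is an equally valid (and elegant) alternative; for the general initial distribution the paper simply cites \cite[Theorem 3.12]{un_BKLP23}, whereas you spell out the dominated convergence bound $t^{\beta/\alpha}\P_y(\tau_\Gamma>t)\lesssim(1+|y|)^\alpha$ coming from Proposition~\ref{prop:M_surv}\eqref{prop:M_surv_2} and the homogeneity of $M_\Gamma$, which is precisely the intended argument.
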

\begin{proof}
	Assume first that the process starts from a fixed point $x \in \Gamma$ with probability $1$, that is the initial distribution $\gamma$ is a Dirac delta at $x$. By \eqref{eq:surv_prob} and the scaling property \eqref{eq:dhk_scaling},
	\begin{align*}
		\P_x \big( t^{-1/\alpha}X_t \in A \,\big|\, \tau_{\Gamma}>t \big) &= \frac{\P_x \big( t^{-1/\alpha}X_t \in A, \, \tau_{\Gamma}>t \big)}{\P_x (\tau_{\Gamma}>t)} \\ &= \frac{\P_{t^{-1/\alpha}x} \big( X_1 \in A, \, \tau_{\Gamma}>1 \big)}{\P_{t^{-1/\alpha}x} (\tau_{\Gamma}>1)} \\ &= \int_A \frac{p_1^{\Gamma}\big( t^{-1/\alpha}x,y \big)}{M_{\Gamma}(t^{-1/\alpha}x)}\ud y \cdot \frac{M_{\Gamma}(t^{-1/\alpha}x)}{\P_{t^{-1/\alpha}x}(\tau_{\Gamma}>1)} \\ &= \int_A \rho_1 (t^{-1/\alpha}x,y)\hM_{\Gamma}(y)\ud y \cdot \frac{M_{\Gamma}(t^{-1/\alpha}x)}{\P_{t^{-1/\alpha}x}(\tau_{\Gamma}>1)}.
	\end{align*}
	Thus, using Corollary \ref{cor:surv_M_as}, Theorem \ref{thm:rho_conv}, \eqref{eq:rho_est} and the dominated convergence theorem we get the conclusion in this case. Note that, in fact, the convergence is uniform in $x \in B$ for every bounded $B \subseteq \Gamma$.
	
	For the general initial distribution $\gamma$ satisfying the integrability condition $\int_{\Gamma} (1+|y|)^{\alpha}\,\gamma(\od y)<\infty$, one can follow the proof of \cite[Theorem 3.12]{un_BKLP23} line by line to deduce the desired result. For this reason, we omit the details.
	
\end{proof}
\begin{remark}
	 As a complement of Remark \ref{rem:4} and Proposition \ref{prop:M_surv}\eqref{prop:M_surv_3}, we note that in exactly the same way one may obtain the dual counterparts of Corollary \ref{cor:surv_M_as} and Theorem \ref{thm:Yaglom}. To wit, one can show that there is a stationary density $\widehat{\vphi}$ for the Ornstein-Uhlenbeck operators $\widehat{L}_t$ given by the integral kernels $\widehat{\ell}_t$ associated to the dual conditioned kernel $\widehat{\rho}_t(x,y):=\rho_t(y,x)$ for $x,y \in \Gamma$. Accordingly, c.f. Lemma \ref{lem:vphi_mod},
	 \[
	 \widehat{\vphi}(y) \approx (1+|y|)^{-d-\alpha} \frac{\P_y(\tau_{\Gamma}>t)}{M_{\Gamma}(y)}, \quad y \in \Gamma,
	 \]
	 and as a consequence, 
	 \[
	 \lim_{\Gamma \ni x \to 0} \frac{\hP_x (\tau_{\Gamma}>t)}{\hM_{\Gamma}(x)} = \widehat{C}_1 t^{-\hbeta/\alpha}, \qquad \text{where} \qquad \widehat{C}_1 = \int_{\Gamma} \widehat{\vphi}(z)M_{\Gamma}(z)\ud z \in (0,\infty),
	 \]
	 and for every $\kappa$-fat cone $\Gamma$ and every probability measure $\gamma$ satisfying $\int_{\Gamma}(1+|y|)^\alpha\,\gamma(\od y)<\infty$,
	 	\[
	 \lim_{t \to \infty} \hP_{\gamma} \big( t^{-1/\alpha}\widehat{X}_t \in A \big| \tau_{\Gamma}>t \big) = \widehat{\mu}(A), \quad A \subseteq \Gamma,
	 \]
	 where
	 \[
	 \widehat{\mu}(A) := \frac{1}{\widehat{C}_1} \int_A \widehat{\vphi}(y)M_{\Gamma}(y)\ud y, \quad A \subseteq \Gamma.
	 \]
	 The necessary changes essentially consist of replacing the underlying objects with its dual counterparts. The details are left for the reader.
\end{remark}
\bibliographystyle{abbrv}
\bibliography{bib-file}
\end{document}